\theoremstyle{plain}
\newtheorem{prop}{Proposition}[section]
\newtheorem{theorem}[prop]{Theorem}
\newtheorem{theoremintro}{Theorem}
\newtheorem{lemma}[prop]{Lemma}
\newtheorem{cor}[prop]{Corollary}
\newtheorem{corintro}[theoremintro]{Corollary}
\theoremstyle{definition}
\newtheorem*{defn*}{Definition}
\newtheorem{defn}[prop]{Definition}
\newtheorem{remark}[prop]{Remark}
\newtheorem*{remark*}{Remark}
\newtheorem{example}[prop]{Example}
\newtheorem*{example*}{Example}
\theoremstyle{remark}
\numberwithin{equation}{section}
\newcommand{\PSL}{\mathsf{PSL}}
\newcommand{\AGL}{\mathsf{AGL}}
\newcommand{\AGammaL}{\mathsf{A\Gamma L}}
\newcommand{\GF}{\mathsf{GF}}
\newcommand{\ZZ}{\mathbf{Z}}
\newcommand{\RR}{\mathbf{R}}
\newcommand{\norma}{\mathcal{N}}
\newcommand{\la}{\langle}
\newcommand{\ra}{\rangle}
\newcommand{\inv}{^{-1}}
\newcommand{\mS}{\mathcal{S}}
\newcommand{\acomm}{\mathscr{L}}
\newcommand{\bd}{\partial}
\DeclareMathOperator{\Aut}{Aut}
\DeclareMathOperator{\AAut}{AAut}
\DeclareMathOperator{\Inn}{Inn}
\DeclareMathOperator{\Out}{Out}
\DeclareMathOperator{\Gal}{Gal}
\DeclareMathOperator{\Alt}{Alt}
\DeclareMathOperator{\Sym}{Sym}
\DeclareMathOperator{\rist}{rst}
\DeclareMathOperator{\st}{st}
\DeclareMathOperator{\Comm}{Comm}
\DeclareMathOperator{\comm}{ad}
\DeclareMathOperator{\QZ}{QZ}
\begin{document}

\title[Simple locally compact groups acting on trees]{Simple locally
compact groups acting on trees and their germs of automorphisms}

\author[Pierre-Emmanuel Caprace]{Pierre-Emmanuel Caprace*}
\address{UCLouvain, Chemin du Cyclotron 2, 1348 Louvain-la-Neuve, Belgium}
\email{pe.caprace@uclouvain.be}
\thanks{*F.N.R.S. Research Associate}
\author[Tom De Medts]{Tom De Medts}
\address{Ghent University, Dept. of Pure Mathematics and Computer Algebra,
\mbox{Krijgslaan} 281 -- S22, 9000 Gent, Belgium}
\email{Tom.DeMedts@UGent.be}

\date{January 2011}
\keywords{tree, totally disconnected locally compact group, simple group, primitive group, spheromorphisms, germ of automorphism, branch group, commensurator}
\subjclass[2000]{(MSC2010): 20E08, 22A05, 22D05}

\begin{abstract}
Automorphism groups of locally finite trees provide a large class of examples of simple totally disconnected
locally compact groups.
It is desirable to understand the connections between the global and local structure of such a group.
Topologically, the local structure is given by the commensurability class of a vertex stabiliser;
on the other hand, the action on the tree suggests that the local structure should correspond to the local action
of a stabiliser of a vertex on its neighbours.

We study the interplay between these different aspects for the special class of groups satisfying
Tits' independence property.
We show that such a group has few open subgroups if and only if it acts locally primitively.
Moreover, we show that it always admits many germs of automorphisms which do not extend to automorphisms,
from which we deduce a negative answer to a question by George Willis.
Finally, under suitable assumptions, we compute the full group of germs of automorphisms;
in some specific cases, these turn out to be simple and compactly generated,
thereby providing a new infinite family of examples which generalise Neretin's group of spheromorphisms. Our methods describe more generally the abstract commensurator group for a large family of self-replicating profinite branch groups.
\end{abstract}

\maketitle

\section{Introduction}\label{sec:intro}

Recent advances in the structure theory of locally compact groups
bring new stimulus to the investigation of the class of locally compact groups
which are non-discrete,
topologically simple and compactly generated. The hope for progress in
this direction is based on
the one hand on a fairly satisfactory understanding of the connected case,
which is completely described thanks to the solution to Hilbert fifth
problem, and on the other
hand to recent progress on the theory of totally disconnected
locally compact groups which tends to show that there is a sharp
contrast between the structure of discrete and non-discrete groups. This
has been notably illustrated by George Willis {\em et.\@ al.\@} in a series of works
starting with \cite{Willis94} and by Marc Burger and Shahar Mozes in their
thorough study of non-discrete automorphism groups of locally finite trees
\cite{Burger-Mozes1}. It was moreover shown in
\cite{Caprace-Monod-monolith} that, under some
natural restrictions which exclude the existence of infinite
discrete quotients, a compactly generated locally compact group
decomposes into finitely many pieces (namely subquotients) which are
all compact,  or compactly generated abelian, or  compactly
generated and topologically simple. This provides additional
motivation to focus on simple groups in this category.

Let us denote by $\mS$ the class of non-discrete totally disconnected
locally compact groups which are  topologically simple. The compactly
generated groups belonging to $\mS$ and known to us at the time of writing
fall into the following classes\footnote{Observe that the examples in this
list fall into countably many isomorphism classes; we do not know whether $\mS$
contains uncountably many pairwise non-isomorphic compactly generated
groups. In the discrete case, it is known since  \cite{Camm} that there
are uncountably many isomorphism classes of finitely generated  simple
groups.}
:

\begin{itemize}
\item semi-simple linear algebraic groups over local fields (including
groups of mixed type in the sense of Tits);

\item complete Kac--Moody groups over finite fields;

\item tree-automorphism groups  and their avatars.
\end{itemize}

The groups from this last class include the tree-automorphism groups
satisfying a simplicity criterion established by Tits~\cite{Tits:trees}
and recalled in Section~\ref{sec:Tits} below. This criterion shall be
referred to as  \textbf{Tits' independence property}.  The aforementioned
 avatars include automorphism groups of some CAT(0) cube complexes
\cite{Haglund-Paulin} and Neretin's group of tree-spheromorphisms
\cite{Neretin}, whose simplicity is proved in \cite{Kapoudjian}.

\medskip
In this paper we focus on compactly generated locally compact groups which
act on trees and satisfy Tits' independence property. Following the
spirit of the work by Burger--Mozes~\cite{Burger-Mozes1}, the central
theme of our work is to investigate to what extent the global structure of
these groups is determined by their local structure. A first sample of this
phenomenon is provided by the following (see also Theorem~\ref{thm:LocPrim} below),
where $E(v)$ denotes the set of edges emanating from the vertex $v$.

\begin{theoremintro}\label{thmi:LocPrim}
Let $T$ be a tree all of whose vertices have valence~$\geq 3$ and $G \in \mS$ be a compactly generated closed subgroup of $\Aut(T)$ which does not stabilise any proper non-empty subtree and which satisfies Tits' independence property. Then the following conditions are equivalent.
\begin{enumerate}[\rm (i)]
\item Every proper open subgroup of $G$ is compact.

\item For every vertex $v \in V(T)$, 	the induced action of $G_v$ on $E(v)$ is primitive; in particular $G$ is edge-transitive.
\end{enumerate}
\end{theoremintro}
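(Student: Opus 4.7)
The plan is to prove the equivalence by treating each direction separately, using Tits' independence property (hereafter property (P)) as the central structural tool.

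For the direction (ii) $\Rightarrow$ (i), first I would observe that primitivity entails transitivity at every vertex, and a short induction along adjacent vertices of $T$ yields edge-transitivity of $G$, confirming the ``in particular'' clause. Now, given a proper open subgroup $H<G$, openness supplies $H\supseteq \Fix_G(B(v_0,n))$ for some $v_0\in V(T)$ and $n\ge 1$. Suppose for contradiction that $H$ is non-compact. Then $H$ cannot stabilise any bounded subtree (else $H$ would fix a vertex or invert an edge, hence be compact), so the orbit $Hv_0$ is unbounded; conjugating the ball-fixator by elements of $H$ shows $H\supseteq \Fix_G(B(w,n))$ for $w$ ranging over this unbounded orbit. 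The core step would then invoke property (P) along the geodesic $\gamma$ connecting $v_0$ to some far-enough $w$: the direct-product decomposition of $\Fix_G(\gamma)$ over the branches hanging off $\gamma$, combined with primitivity of $F_u$ at an interior vertex $u\in\gamma$, would show that the two ball-fixators jointly generate a subgroup containing $G_u$. Finally, once $G_u\le H$, a standard consequence of property (P) together with local primitivity is that the closure of $\langle G_u,g\rangle$ equals $G$ for any element $g$ moving $u$; since $H$ is non-compact it contains such a $g$, forcing $H=G$, a contradiction.

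For the direction (i) $\Rightarrow$ (ii), I would argue contrapositively: assume $G_v$ acts non-primitively on $E(v)$ for some $v$ and build a proper non-compact open subgroup of $G$. Choose $\Pi\subseteq E(v)$ to be either a proper non-trivial $G_v$-orbit (if the action is intransitive) or a non-trivial block of a $G_v$-invariant partition (if the action is transitive but imprimitive); in both cases $\Pi$ propagates $G$-equivariantly to choices $\Pi_w\subseteq E(w)$ at every $w\in G\cdot v$. I would then construct a subtree $T^*\subseteq T$ recursively: start at $v$ with the edges in $\Pi$, and at each newly reached vertex $u$ include the edges in the translate of $\Pi$ at $u$ that contains the incoming edge, together with their other endpoints. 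The setwise stabiliser $H:=\st_G(T^*)$ contains the open subgroup $\{g\in G_v:g(\Pi)=\Pi\}$, hence is open; it is proper because $H=G$ would force $T^*$ to be a proper $G$-invariant subtree, contrary to hypothesis; and it is non-compact because the recursive definition makes it clear that any $g\in G$ taking the pair $(v,\Pi)$ to any other $(u,\Pi_u^*)$ appearing in the construction automatically preserves $T^*$, yielding an unbounded $H$-orbit on the infinite vertex set of $T^*$.

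The main obstacle I anticipate is the technical step in (ii)$\Rightarrow$(i) where property (P) is used along the geodesic to show that two far-apart ball-fixators jointly generate a subgroup containing a full vertex stabiliser: this requires carefully unpacking the independence decomposition on each branch and using primitivity at the chosen interior vertex to surject onto $F_u$. Minor care is also needed to dispose of degenerate sub-cases in the contrapositive construction (for instance, where $F_v$ itself is trivial, which the valence-$\geq 3$ hypothesis together with $G\in\mS$ should allow one to rule out at the outset), but these should not be serious.
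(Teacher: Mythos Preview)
Your argument for (ii)$\Rightarrow$(i) is correct and is essentially a reorganisation of the paper's proof. The paper isolates a general lemma (Proposition~\ref{prop:Titsopen}): every non-compact open subgroup $H$ contains the full edge-stabiliser $G_e$ for every edge $e$ of its minimal $H$-invariant subtree $T'$, proved by first producing a hyperbolic element in $H$ and then using property~(P) along its axis. Once this is known, primitivity gives $G_v=\langle G_{e_1},G_{e_2}\rangle\leq H$ for any $v\in V(T')$ with $e_1,e_2\in E(v)\cap E(T')$, forcing $T'=T$ and hence $H=G$. Your ball-fixator argument along a long geodesic is a direct variant of this: it amounts to recovering $G_{e_1}$ and $G_{e_2}$ at a midpoint $u$ from the two half-tree fixators supplied by property~(P), and then invoking primitivity exactly as the paper does. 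So the ``main obstacle'' you flag is genuine but surmountable in precisely the way the paper handles it.

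For (i)$\Rightarrow$(ii) your approach differs from the paper's and has a real gap in the intransitive sub-case. When $\Pi$ is a proper $G_v$-orbit, the phrase ``the translate of $\Pi$ at $u$ that contains the incoming edge'' is ill-posed: for $u\in G\cdot v$ there is a \emph{unique} translate $\Pi_u=g\Pi$ (independent of the choice of $g$ with $gv=u$, since $\Pi$ is $G_v$-invariant), and the incoming edge need not lie in $\Pi_u$. If instead you use ``the $G_u$-orbit containing the incoming edge'', the construction becomes well-defined, but then your non-compactness argument collapses: you need $g\in G$ with $gv=u$ and $g\Pi=\Pi_u^\ast$, yet any such $g$ sends $\Pi$ to $\Pi_u$, and $G_u$ cannot correct this since it fixes each of its orbits setwise. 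You also leave the behaviour at vertices outside $G\cdot v$ unspecified. In the transitive-imprimitive sub-case your construction does work (interpreting ``include all edges'' at vertices outside $G\cdot v$), because $G_u$ is then transitive on the blocks at $u$; but the intransitive case needs a separate argument. The paper sidesteps all of this by first invoking Bass--Serre theory to present $G$ as an amalgam $A\ast_C B$, so that on the associated Bass--Serre tree $G$ is automatically edge-transitive; imprimitivity at a vertex $v$ then yields $G_e\lneq H_1\lneq G_v$ and the subgroup $\langle H_1,G_w\rangle\cong H_1\ast_{G_e}G_w$ is manifestly open, proper (its minimal invariant subtree misses edges at $v$), and non-compact.
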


It turns out that, in the above setting, the $G_v$-action on $E(v)$ cannot
be cyclic (see Lemma~\ref{lem:LocCyclic} below). Moreover, the hypotheses of thickness of $T$ and of minimality of the $G$-action can be relaxed, see Theorem~\ref{thm:LocPrim} below. 

\begin{remark}
It was shown by Burger--Mozes \cite[Theorem~4.2]{Burger-Mozes2} that a closed subgroup $H \leq \Aut(T)$ which is locally $\infty$-transitive (or equivalently, a closed subgroup acting doubly transitively on the boundary $\partial T$, see \cite[Lemma~3.1.1]{Burger-Mozes1}) enjoys the \textbf{Howe--Moore property}. It is well known that in a locally compact group satisfying the Howe--Moore property, any proper open subgroup is compact. This is consistent with Theorem~\ref{thmi:LocPrim} since a locally $\infty$-transitive action is clearly locally $2$-transitive, and hence locally primitive. We do not know of any \emph{local} characterization of the Howe--Moore property for closed subgroups of $\Aut(T)$ (acting cocompactly on $T$); in particular, we do not know whether the condition that the action be locally $\infty$-transitive is necessary. 
\end{remark}

Theorem~\ref{thmi:LocPrim} relates the global structure of $G$ to the
structure of its \emph{maximal} compact subgroups. In the spirit of a
general theory inspired by the classical case of Lie groups, it would be
even more desirable to relate the structure of $G$ to arbitrarily small
compact open subgroups. In order to address this issue, we consider the
group $\mathscr L(G)$ of \textbf{germs of automorphisms} of $G$. By
definition, this group consists of isomorphisms between compact open
subgroups of $G$, modulo the equivalence relation which identifies
isomorphisms between pairs of compact open subgroups which coincide on
respective open subgroups. Alternatively the group $\mathscr L(G)$ can be
defined as  the group of \textbf{abstract commensurators} of any compact
open subgroup of $G$. The group $\mathscr L(G)$ is defined for any totally
disconnected locally compact group; it is trivial when $G$ is discrete.
Since every identity neighbourhood of $G$ contains some compact open
subgroup, the group $\mathscr L(G)$ is an invariant of $G$ determined by
its local structure.
The kernel of the canonical homomorphism $\comm : G \to \mathscr L(G)$ is precisely
the {\bf quasi-centre} $\QZ(G)$ of $G$. If the quasi-centre is closed, then
$\mathscr L(G)$ carries a natural group topology,
which is again totally disconnected and locally compact and such that the map $G \to \mathscr L(G)$ is continuous. If in addition $\QZ(G)$ is discrete, then the groups $G$ and $\mathscr L(G)$ are \textbf{locally isomorphic}, \emph{i.e.} they contain isomorphic compact open subgroups. In particular $\mathscr L(\mathscr L(G)) \cong \mathscr L(G)$. We refer to Section~\ref{sec:Germ} below for more details.

It is a natural question to ask to what extent the
group $G$ can be recovered from its local structure.
Following \cite{BarneaErshovWeigel}, we
say that a group $G \in \mS$ is \textbf{rigid} if
any isomorphism between two compact open subgroups of $G$ extends to a unique
automorphism of $G$.
An equivalent way to state this, is to say that
the canonical homomorphism $\Aut(G) \to \mathscr L(G)$ is an isomorphism.
If $G$ is compactly generated, then the group $G \cong \comm(G)$ can be identified with the intersection of
all non-trivial closed normal subgroups of $\acomm(G)$ (see Proposition~\ref{prop:AdmissibleRigid} below);
in this sense a rigid compactly generated group is thus determined by its local structure. 

On the other hand, it is clear that two groups containing isomorphic compact
open subgroups have isomorphic groups of germs of automorphisms. In order to address this issue, we shall
say that a group $G \in \mS$ is \textbf{Lie-reminiscent} if for any topologically simple group $H \in
\mS$ which is locally isomorphic to $G$, we have $H \cong G$. Using the ideas developed in 
\cite{BarneaErshovWeigel}, one can show that a compactly generated group $G \in \mS$
which is Lie-reminiscent is necessarily rigid (see Proposition~\ref{prop:AdmissibleRigid} below). The examples of simple groups provided by Theorems~\ref{thmi:L(G)} and~ \ref{thmi:NewSimpleGroups} below show that the converse fails, even for compactly generated groups in the class $\mS$.

The following result, proved in Theorem~\ref{thm:NonRigid} below, shows in
conjunction with Proposition~\ref{prop:AdmissibleRigid} that any compactly generated simple tree-automorphism group
satisfying Tits' independence property is not Lie-reminiscent.

\begin{theoremintro}\label{thmi:NonRigid}
Let $T$ be a locally finite tree and $G \in \mS$ be a
compactly generated closed subgroup of $\Aut(T)$ satisfying Tits' independence property.
Then $G$ is not rigid.
\end{theoremintro}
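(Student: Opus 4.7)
The plan is to exhibit an isomorphism between two compact open subgroups of $G$ which cannot be extended to any automorphism of $G$, by exploiting the direct-product structure of compact open subgroups that Tits' independence supplies.

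Fix an edge $e=\{v_0,v_1\}$ of $T$ and set $K:=\Fix_G(e)$. Tits' independence yields a decomposition
\[
K \;=\; U_0\times U_1, \qquad U_i := \rist_G(T_i),
\]
where $T_i$ is the half-tree at $e$ containing $v_i$. Both $U_0$ and $U_1$ are non-trivial (indeed uncountable) closed subgroups of $G$: if, say, $U_0$ were trivial, then iterating the analogous Tits decomposition along a ray in $T_0$ would force $G$ to be discrete, contradicting $G\in\mS$.

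I then construct an automorphism $\phi$ of $K$ of the shape $\phi = \id_{U_0}\times\beta$ for a suitable $\beta\in\Aut(U_1)$. To manufacture $\beta$, I apply Tits' independence a second time, now inside $T_1$: pick $v_1'\in N(v_1)\setminus\{v_0\}$ and the edge $e'=\{v_1,v_1'\}$, obtaining a further direct-product refinement of an open subgroup of $U_1$ whose factors are rigid stabilisers of the branches at $v_1'$. Two such branches are pairwise isomorphic as rooted half-trees, so swapping the corresponding factors yields an automorphism of the refined subgroup; this swap is chosen so that it is \emph{not} realised by any element of the local action of $G$ at $v_1'$. (If the local action already accommodates every swap of adjacent branches, one pushes the construction deeper along a branch until the local action cannot globally realise the swap — this is possible because $G$ is compactly generated and hence the local actions are fixed finite groups.) This refined automorphism is then extended to all of $U_1$ using transversals for the finite-index embeddings produced along the way.

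Finally, I argue that $\phi$ does not extend to any $\Psi\in\Aut(G)$. Supposing it did, $\Psi|_{U_0}=\id$ forces $\Psi$ to centralise $U_0$; combining this with topological simplicity and the minimal action of $G$ on $T$, one uses the identification of $\Aut(G)$ with a subgroup of $\Aut(T)$ (valid for closed subgroups of $\Aut(T)$ with Tits' independence in $\mS$) to lift $\Psi$ to a tree automorphism $\psi$ normalising $G$ and centralising $U_0$; such a $\psi$ must fix $T_0$ pointwise. But then the action of $\Psi$ on $U_1$ is induced by a tree automorphism, and the permutation it induces on the branches at $v_1'$ must belong to the local action of $G$ at $v_1'$, directly contradicting the choice of $\beta$. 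The principal obstacle is precisely the identification $\Aut(G)\hookrightarrow\Aut(T)$, which converts the abstract algebraic condition ``$\Psi|_{U_0}=\id$'' into the concrete tree-theoretic condition ``$\psi$ fixes $T_0$ pointwise''; without it, one must run the same contradiction purely internally, using centralisers and normalisers of the canonical compact open subgroups provided by Tits' independence.
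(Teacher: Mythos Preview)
Your construction of $\beta$ does not go through in general. Take $G = \Aut(T)^+$ for $T$ the $d$-regular tree. Then $U_1 = \rist_G(T_1)$ is the full automorphism group of the rooted $(d{-}1)$-ary tree, i.e.\ $U_1 \cong W(\Sym(d{-}1))$, and Lemma~\ref{lem:Aut(W)} gives $\Out(U_1)=1$. Hence every $\beta\in\Aut(U_1)$ is inner, so $\phi = \mathrm{id}_{U_0}\times\beta$ is conjugation by an element of $K$ and trivially extends to $G$. Your escape clause (``push the construction deeper along a branch until the local action cannot globally realise the swap'') does not help here: at \emph{every} vertex the local action of $G$ is the full symmetric group, so every transposition of adjacent branches is realised by an element of $G$, no matter how deep you go; whatever automorphism of a finite-index subgroup of $U_1$ you build and then ``extend to all of $U_1$ using transversals'' will land in $\Inn(U_1)$. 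The same computation shows that every automorphism of the edge-stabiliser $K \cong W^2$ --- and indeed of the ball-stabiliser $G_{(B_1(v))}\cong W^d$ --- already extends to $\Aut(T)$, so non-rigidity cannot be witnessed at this shallow level. Finally, the identification $\Aut(G)\hookrightarrow\Aut(T)$ on which your last paragraph rests is neither proved nor obvious; without it you have no mechanism to convert ``$\Psi$ centralises $U_0$'' into a constraint on local permutations.

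The paper's argument is structurally different. It passes to $U_n = G_{(B_n(v))}$ for \emph{all} $n$, decomposes $U_n$ via Tits' independence as $\prod_{h\in H_n} G_{(h)}$ with the factors falling into finitely many $G$-orbit types $\Omega_n^1,\dots,\Omega_n^k$, and observes that rigidity would force a surjection $A_n \twoheadrightarrow \prod_i \Sym(\Omega_n^i)$ from a subgroup $A_n\leq\Aut(G)$ in which $N_n=\norma_G(U_n)$ sits as a \emph{normal} subgroup. The contradiction then comes from analysing $\pi_n(N_n)$ as a normal subgroup of a product of (possibly infinite) symmetric groups: the Baer--Schreier--Ulam theorem and countability of $N_n$ force $\pi_n(N_n)$ to be finite, while a growth comparison (the bounded index $[N_n:G_v]$ against the size of $\Alt(\Omega_n^i)$) forces some $\pi_n^i(G_v)$ to equal $\Alt$ or $\Sym$ on arbitrarily large sets --- which is then shown to be incompatible with the imprimitivity of the sphere actions of $G_v$. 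The point is not to exhibit one explicit non-extending germ, but to squeeze the image of $N_n$ between two incompatible constraints as $n\to\infty$.
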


The fact that all these groups are not Lie-reminiscent provides a negative answer
to a question of George Willis \cite[Problem 4.3]{Willis07}. Another example of a non-rigid
compactly generated group belonging to $\mS$ was constructed in \cite{BarneaErshovWeigel}.

Our next goal is to give an explicit description of the group of germs of
automorphisms for some groups $G \in \mS$ satisfying Tits' independence
condition. The groups we shall focus on are the \textbf{universal groups
with prescribed local action} defined by
Burger--Mozes~\cite{Burger-Mozes1}. The precise definition of these groups
is recalled in Section~\ref{sec:BM} below.

\begin{theoremintro}\label{thmi:L(G)}
Let $d >1$ and $F \leq \Sym(d+1)$ be a doubly transitive finite permutation group
and  $G = U(F)^+$ be the universal simple group acting on the regular tree
of degree $d+1$ locally like $F$. Then the following conditions are equivalent:
\begin{enumerate}[\rm (i)]
\item $\mathscr L(G)$ is compactly generated.

\item $G$ is locally isomorphic to some compactly generated rigid group $H \in \mS$.

\item $\norma_{\Sym(d)}(F_0) = F_0$, where $F_0$ denotes a point stabiliser in $F$.
\end{enumerate}
If these conditions hold, then the commutator subgroup $[\mathscr L(G), \mathscr L(G)]$ is open,
abstractly simple, and has index~$1$ or $2$ in $\mathscr L(G)$.
\end{theoremintro}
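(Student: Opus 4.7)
The strategy is to give a concrete description of $\mathscr L(G)$ and to read off the three conditions from this model. A compact open vertex stabiliser $K = G_v$ of $U(F)^+$ is an iterated permutational wreath product of $F$ along a rooted $d$-ary subtree of $T$, and under Tits' independence property its abstract commensurator $\mathscr L(G)$ can be identified with a group of \emph{almost automorphisms} of $T$: the homeomorphisms of $\partial T$ that, after excising finite subtrees $S, S' \subset T$, are induced by tree isomorphisms between the components of $T \setminus S$ and those of $T \setminus S'$ whose local action at every vertex lies in $F$. This generalises Neretin's spheromorphism group, recovered when $F = \Sym(d+1)$.

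For \emph{(iii) $\Leftrightarrow$ (i)}, I would show that in this model $\mathscr L(G)$ is topologically generated by $K$ together with ``edge swaps'' permuting two sibling subtrees. Condition (iii) is what guarantees that any two such swaps at a given edge differ by an element of $K$: the ambiguity in choosing a tree isomorphism between two half-trees rooted at adjacent vertices, compatible with the local $F$-prescription, is measured precisely by cosets of $F_0$ in $\norma_{\Sym(d)}(F_0)$. Under (iii) the choice is unique modulo $K$, so $\mathscr L(G) = \langle K, \sigma \rangle$ for a single elementary swap $\sigma$, yielding (iii) $\Rightarrow$ (i). Conversely, when $\norma_{\Sym(d)}(F_0) \supsetneq F_0$, for each $n \geq 1$ one can construct an almost automorphism of $T$ supported near the $n$-th sphere whose class modulo $K$ is a non-trivial coset of $F_0$ in its normaliser; these classes assemble into a continuous surjection of $\mathscr L(G)$ onto the restricted direct sum $\bigoplus_{n \geq 1} \norma_{\Sym(d)}(F_0)/F_0$, which is not finitely generated, so $\mathscr L(G)$ cannot be compactly generated.

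For \emph{(i) $\Rightarrow$ (ii)}, once the final statement of the theorem is established, $H := [\mathscr L(G), \mathscr L(G)]$ is itself a witness: it belongs to $\mS$, is open of finite index in the compactly generated $\mathscr L(G)$ hence compactly generated, is locally isomorphic to $G$, and is rigid via $\mathscr L(\mathscr L(G)) \cong \mathscr L(G)$ combined with Proposition~\ref{prop:AdmissibleRigid}. For \emph{(ii) $\Rightarrow$ (i)}, Proposition~\ref{prop:AdmissibleRigid} realises any such $H$ as the intersection of all non-trivial closed normal subgroups of $\mathscr L(H) \cong \mathscr L(G)$, hence as an open normal subgroup; rigidity then forces the discrete quotient $\mathscr L(G)/H$ to be finite, so compact generation passes from $H$ to $\mathscr L(G)$. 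The openness and index bound for $[\mathscr L(G), \mathscr L(G)]$ at the end of the theorem come from a natural sign/orientation character on almost automorphisms analogous to the index~$\leq 2$ of $U(F)^+$ in $U(F)$, while abstract simplicity follows from a Tits--Kapoudjian contraction argument exploiting double transitivity of $\mathscr L(G)$ on $\partial T$. The main technical obstacle will be the $\neg$(iii) $\Rightarrow \neg$(i) direction: verifying that the level-wise obstructions truly give rise to independent generators in a non-compactly-generated quotient, rather than being absorbed into some compact subgroup of $\mathscr L(G)$.
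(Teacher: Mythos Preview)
Your model of $\mathscr L(G)$ as almost automorphisms with prescribed local action is correct and matches the paper's framework (it is made precise in Lemma~\ref{lem:Rover} and Theorem~\ref{thm:CommBranch:Iwasawa}, with the edge stabiliser $W^2$, $W = W(F_0)$, playing the role of your $K$). However, two of your implications have genuine gaps.

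For $\neg$(iii) $\Rightarrow$ $\neg$(i): the obstruction you describe is real but is not organised as a restricted direct sum. The ``almost automorphisms supported near the $n$-th sphere'' that witness nontrivial cosets of $F_0$ in $\norma_{\Sym(d)}(F_0)$ are elements of $\Aut(W)$, and Lemma~\ref{lem:Aut(W)} computes $\Out(W) \cong \prod_{n>0} \norma_{\Sym(d)}(F_0)/F_0$, a full product. There is no continuous surjection from $\mathscr L(G)$ onto a discrete $\bigoplus_n \norma_{\Sym(d)}(F_0)/F_0$ of the kind you propose: $K$ is not normal in $\mathscr L(G)$, so your level-wise cosets do not assemble into a quotient of $\mathscr L(G)$, only of $\norma_{\mathscr L(G)}(K) = \Aut(K)$. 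The paper's argument is an \emph{uncountability} argument instead: when $\norma_{\Sym(d)}(F_0) \neq F_0$ the group $\Out(W)$ is uncountable, hence the compact open subgroup $W^2$ already has uncountable index in $\Aut(W^2) \subset \mathscr L(G)$, so $\mathscr L(G)$ is not even $\sigma$-compact. You correctly flagged this direction as the main obstacle; the resolution is to trade ``not finitely generated quotient'' for ``not $\sigma$-compact''.

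For (ii) $\Rightarrow$ (i): your assertion that ``rigidity then forces the discrete quotient $\mathscr L(G)/H$ to be finite'' is unjustified. Rigidity of $H$ gives $\mathscr L(G) = \mathscr L(H) \cong \Aut(H)$, so you are asserting that $\Out(H)$ is finite for an arbitrary compactly generated rigid $H \in \mS$; this is not a general fact and, for the groups at hand, is only established \emph{a posteriori} via the explicit structure (Corollary~\ref{cor:M(D, k)}). The paper bypasses this by proving (ii) $\Rightarrow$ (iii) directly through Proposition~\ref{prop:Aut}: if $H$ is compactly generated and rigid, then for any compact open $U \leq H$ and characteristic open $V \leq U$, the index of $\Aut(U)$ in $\Aut(V)$ is at most countable; taking $U = W^2$ and $V = \st_W(1)^2$, this index is uncountable precisely when $\Out(W) \neq 1$, forcing (iii). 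The passage (iii) $\Rightarrow$ (i) then comes from the decomposition $\mathscr L(G) = V_{d,2} \cdot W^2$ (Theorem~\ref{thm:CommBranch}), which is the precise form of your ``$\langle K, \sigma\rangle$'' claim.
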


Condition (iii) of Theorem~\ref{thmi:L(G)} clearly excludes the alternating groups or the sharply $2$-transitive groups. Although we did not try to be fully exhaustive here, it seems however that the proportion of doubly transitive finite  permutation groups which satisfy that condition is rather large (see \cite{Cameron} for a list of all finite $2$-transitive groups). 

Theorem~\ref{thmi:L(G)} will be deduced from a detailed study of the group of abstract commensurators of self-replicating profinite wreath branch groups which was largely inspired by a  reading of \cite{BarneaErshovWeigel}. As a consequence of this study, we shall establish the following theorem, which highlights an infinite family of compactly generated locally compact groups which are simple and rigid, but not Lie-reminiscent. We recall that a locally compact group is called \textbf{locally elliptic} if every compact subset is contained in a compact subgroup. We refer to \cite[\S4]{Brown} for a precise definition of the Higman--Thompson group $F_{d, k}$ appearing below. For any $k>0$, the group $F_{2, k}$ is isomorphic to Thompson's group $F$.  

\begin{theoremintro}\label{thmi:NewSimpleGroups}
Let $d > 1$,  $D \leq \Sym(d)$ be transitive and $W = W(D)$ be the profinite branch group defined as the infinitely iterated wreath product of $D$ with itself.  Then for every $k>0$, there is a locally compact group $M = M(D, k)$ which is topologically simple and rigid, and which contains the direct product $W^k$ of $k$ copies of $W$ as a compact open subgroup. Moreover:
\begin{enumerate}[\rm (i)]
\item $M$ is uniquely determined up to isomorphism. 

\item $M$ is compactly generated if and only if  $\norma_{\Sym(d)}(D) = D$. 

\item $[\acomm(M): \comm(M)] \leq 2$.

\item $\acomm(M) =  F_{d, k} \cdot A_k$, where  $F_{d, k}$ is a copy of the Higman--Thompson group  embedded as a discrete subgroup, and $A_k$ is a non-compact locally elliptic open subgroup  such that $F_{d, k} \cap A_k = 1$. 
\end{enumerate}
\end{theoremintro}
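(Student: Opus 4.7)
The plan is to realise $M = M(D,k)$ explicitly as a closed subgroup of the abstract commensurator $\acomm(W^k)$, exploiting the self-replicating branch structure of $W$. View $W^k$ as acting on the rooted forest $\mathscr{F}_{d,k}$ consisting of $k$ disjoint copies of the rooted $d$-regular tree, the $i$-th factor of $W^k$ acting on the $i$-th component. Because $W$ is self-replicating, every finite prefix-replacement on the boundary of $\mathscr{F}_{d,k}$ conjugates some open subgroup of $W^k$ into another open subgroup of $W^k$ and hence defines an element of $\acomm(W^k)$; this yields a canonical embedding of the Higman--Thompson group $F_{d,k}$ into $\acomm(W^k)$. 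Endow $\acomm(W^k)$ with the unique tdlc topology making $W^k$ compact open (available because $\QZ(W^k)=1$, since $W$ is branch) and define $M$ as the closed subgroup topologically generated by $\comm(W^k)$ together with an appropriate (index-$\leq 2$) subgroup of $F_{d,k}$.

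Simplicity of $M$ then follows from the standard commutator-density scheme for branch groups: any non-trivial closed normal subgroup of $M$ must meet some rigid cylinder stabiliser in $W^k$ non-trivially, and the transitivity properties of $F_{d,k}$ on ends, combined with branch-ness of $W$, propagate this intersection to all of $M$. Rigidity is obtained by verifying that $\acomm(M) = \acomm(W^k)$ and that the natural map $\Aut(M)\to\acomm(M)$ is an isomorphism; self-replication of $W$ is precisely what lets one globalise every partial isomorphism between compact open subgroups uniquely. Uniqueness (i) is then a monolith argument of the type recorded in Proposition~\ref{prop:AdmissibleRigid}: any simple $M' \in \mS$ containing $W^k$ as a compact open subgroup embeds canonically via $\comm$ into $\acomm(W^k)$ (its quasi-centre being trivial), and rigidity forces the image to coincide with $M$.

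For (ii), $M$ is topologically generated by $W^k$ together with the finitely generated group $F_{d,k}$, so compact generation reduces to whether finitely many elements suffice on top of $W^k$; the obstruction is the accumulation of non-inner automorphisms of $W$ coming from the quotient $\norma_{\Sym(d)}(D)/D$, exactly as in Theorem~\ref{thmi:L(G)}, giving the criterion $\norma_{\Sym(d)}(D)=D$. For (iv), define $A_k\leq\acomm(W^k)$ to be the closed subgroup stabilising the set of ends of $\mathscr{F}_{d,k}$: it is open, contains $W^k$ as a cocompact subgroup, is the ascending union of the compact level stabilisers (hence non-compact and locally elliptic), and satisfies $A_k\cap F_{d,k}=1$ since any non-trivial Higman--Thompson element moves at least one end. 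The factorisation $\acomm(W^k) = F_{d,k}\cdot A_k$ follows from the observation that every commensuration of $W^k$ is the composition of a Higman--Thompson rearrangement of cylinders with an automorphism preserving them. The bound $[\acomm(M):\comm(M)]\leq 2$ in (iii) reflects the at-most-index-$2$ step used in the construction of $M$ inside $\acomm(W^k)$.

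The main obstacle is the rigidity step, tied in with the explicit decomposition in (iv): both require showing that every partial isomorphism between two compact open subgroups of $W^k$ globalises uniquely to an element of $\acomm(W^k)$, which is the technical heart of the abstract-commensurator analysis of self-replicating profinite wreath branch groups flagged in the introduction.
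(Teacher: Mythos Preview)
Your construction of $M$ is where the proposal goes wrong. You define $M$ as the closure of the group generated by $\comm(W^k)\cong W^k$ together with an index-$\leq 2$ subgroup of $F_{d,k}$. When $\norma_{\Sym(d)}(D)\neq D$, this group is \emph{strictly smaller} than the rigid simple group the theorem is about: it lands inside $F_{d,k}\cdot O_k$ (with $O_k=\bigcup_n W\wr\Sym(kd^n)$), whereas the correct $M$ must contain $A_k^+=\bigcup_n \Aut(W)\wr\Alt(kd^n)$, and $\Out(W)\cong\prod_{n>0}\norma_{\Sym(d)}(D)/D$ is then uncountable. Your $M$ is thus not normal in $\acomm(W^k)$, hence not rigid by the criterion in Proposition~\ref{pr:rigid}. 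The paper avoids this by defining $M$ structurally as the intersection of all non-trivial closed normal subgroups of $\acomm(W^k)$; rigidity and simplicity then come for free from Proposition~\ref{prop:AdmissibleRigid}, once one has exhibited \emph{some} simple group in $\mS$ with $W^k$ as a compact open subgroup (namely $A_k^+$). This order of operations---first describe $\acomm(W^k)$ completely, then take its monolith---is essential.

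There are smaller issues as well. Your description of $A_k$ as ``stabilising the set of ends'' is not meaningful as stated (every almost automorphism acts on the end space), and the claim that $W^k$ is cocompact in $A_k$ contradicts your own assertion that $A_k$ is non-compact with $W^k$ compact open. In the paper, $A_k$ is defined concretely as $\bigcup_n \Aut(W^{kd^n})$, and its identification requires the explicit computation of $\Aut(W)$ in Lemma~\ref{lem:Aut(W)}. The factorisation $\acomm(W^k)=F_{d,k}\cdot A_k$ you sketch is indeed the heart of the matter, but its proof rests on R\"over's theorem (every abstract commensurator of a branch group acts on the boundary, hence by almost automorphisms on the tree), which is what Lemma~\ref{lem:Rover} packages; you flag this as the obstacle but do not indicate how to overcome it. Finally, for (ii) the forward implication (compact generation $\Rightarrow$ $\norma_{\Sym(d)}(D)=D$) is not a matter of ``finitely many elements sufficing'': the paper's argument (Theorem~\ref{thm:CommBranch}) uses Proposition~\ref{prop:Aut} to show that rigidity plus compact generation forces $\Aut(W^k)$ to have countable index in $\Aut(\st_W(1)^k)$, which fails when $\Out(W)$ is uncountable.
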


The connection between Theorems~\ref{thmi:L(G)} and~\ref{thmi:NewSimpleGroups} is the following:
Let $F$, $F_0$ and $G$ be as in Theorem~\ref{thmi:L(G)}, and define $M = M(F_0, 2)$ as in Theorem~\ref{thmi:NewSimpleGroups}.
Then the simple group $[\acomm(G), \acomm(G)]$ appearing in Theorem~\ref{thmi:L(G)} coincides with the group $M = M(F_0, 2)$.

Theorem~\ref{thmi:L(G)} applies notably to the full symmetric group $F = \mbox{$\Sym(d+1)$}$; in
that case $\mathscr L(G)$ coincides with {Neretin's group of
spheromorphisms} introduced in \cite{Neretin}. Equivalently, the Neretin group of the regular tree of degree~$d+1$ is isomorphic to the group $M(\Sym(d), 2)$ appearing in Theorem~\ref{thmi:NewSimpleGroups}. We can thus summarize our results about Neretin's group of spheromorphisms as follows; as mentioned above, the simplicity statement is originally due to Ch.~Kapoudjian~\cite{Kapoudjian}.

\begin{corintro}
	Let $T$ be the regular tree of degree $d+1$, and let $G$ be Neretin's group of spheromorphisms of $T$.
	Then:
	\begin{enumerate}[\rm (i)]
	    \item
		$G = \mathscr L(\Aut(T))$. In particular  $\mathscr L(G) \cong G$, i.e.\@ $G$ is hyperrigid in the sense of \cite{BarneaErshovWeigel}. 
	    \item
		$G$ is compactly generated, abstractly simple and rigid, but not Lie-reminiscent.
	\end{enumerate}
\end{corintro}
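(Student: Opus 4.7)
The plan is to derive each assertion from the theorems already announced. For part~(i), I would first observe that $\Aut(T) = U(\Sym(d+1))$ (the local-action condition is vacuous for the full symmetric group), so $\Aut(T)$ contains $U(\Sym(d+1))^+$ as an open subgroup of index~$2$ and the two are locally isomorphic. Next, I would unpack the definition: a germ of automorphism of $\Aut(T)$ is an equivalence class of isomorphisms between pointwise stabilisers of finite subtrees; using the homogeneity of $T$ together with Tits' independence property, any such isomorphism is uniquely encoded by a tree-isomorphism between the two cofinite forests obtained by removing those finite subtrees---i.e.\@ a tree-spheromorphism in the sense of Neretin. This yields the canonical identification $\mathscr L(\Aut(T)) = G$. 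The hyperrigidity statement $\mathscr L(G) \cong G$ then follows by applying the general principle $\mathscr L(\mathscr L(H)) \cong \mathscr L(H)$ (valid for any $H$ with discrete quasi-centre) to $H = \Aut(T)$: the quasi-centre of $\Aut(T)$ is trivial because any non-trivial automorphism is moved under conjugation by a suitable element fixing a large enough finite subtree.

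For part~(ii), I would apply Theorems~\ref{thmi:L(G)} and~\ref{thmi:NewSimpleGroups} with $F = \Sym(d+1)$ and hence $F_0 = \Sym(d)$. The self-normalising condition $\norma_{\Sym(d)}(F_0) = F_0$ is automatic (since $F_0$ is the whole ambient group), so Theorem~\ref{thmi:L(G)} gives compact generation of $G = \mathscr L(U(F)^+)$ and that $[G,G]$ is open, abstractly simple, of index at most~$2$ in $G$. Combined with the identification $G \cong M(\Sym(d), 2)$ recorded just before the corollary, Theorem~\ref{thmi:NewSimpleGroups} then yields topological simplicity and rigidity of $G$; abstract simplicity is Kapoudjian's original result~\cite{Kapoudjian}. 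Finally, to witness the failure of Lie-reminiscence, I would take $H = U(\Sym(d+1))^+$: by Tits' simplicity criterion $H \in \mS$, and $H$ is a compactly generated closed subgroup of $\Aut(T)$ satisfying Tits' independence property, so Theorem~\ref{thmi:NonRigid} shows that $H$ is not rigid. But $H$ is locally isomorphic to $G$ (both being locally isomorphic to $\Aut(T)$) and $G$ is rigid by the previous sentence, hence $H \not\cong G$; as rigidity is an isomorphism invariant, this exhibits a simple group locally isomorphic to but non-isomorphic to $G$, which is the required witness.

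The step I expect to be the main obstacle is the careful verification of the identification $\mathscr L(\Aut(T)) = G$ in part~(i): translating between germ-theoretic equivalence classes of isomorphisms of stabilisers and genuine tree-spheromorphisms requires Tits' independence property to ensure that the image of each pendant subtree under the germ is independently well-defined, and one must check that the correspondence is both exhaustive and compatible with the two equivalence relations (germs on one side, cofinite equality of partial tree-isomorphisms on the other). Once this translation is in place, all remaining claims reduce to direct invocations of Theorems~\ref{thmi:NonRigid}, \ref{thmi:L(G)}, and \ref{thmi:NewSimpleGroups}, together with Kapoudjian's simplicity theorem.
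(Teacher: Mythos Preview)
Your proposal is correct, and for part~(ii) it matches the paper's intended derivation: compact generation, simplicity, and rigidity are read off from Theorems~\ref{thmi:L(G)} and~\ref{thmi:NewSimpleGroups} with $F=\Sym(d+1)$ (noting that $D=\Sym(d)\not\leq\Alt(d)$ forces $[\acomm(M):M]=1$ via Theorem~\ref{thm:NewSimpleGroups}(iii), so Kapoudjian is not strictly needed), and non-Lie-reminiscence is witnessed by $U(\Sym(d+1))^+$ via Theorem~\ref{thmi:NonRigid}, exactly as you say.

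For part~(i), you take a genuinely different route from the paper. You propose to unpack the definition of germs directly and match them by hand with tree-spheromorphisms, using Tits' independence property to see that a germ determines a well-defined isomorphism of cofinite forests. The paper instead derives the identification from its general structural machinery: an edge stabiliser in $\Aut(T)$ is isomorphic to $W(\Sym(d))^2$, so Theorem~\ref{thm:CommBranch:Iwasawa} with $D=\Sym(d)$ and $k=2$ gives $\acomm(\Aut(T))=F_{d,2}\cdot A_2\leq\AAut(\mathcal T_{d,2})$; since $\norma_{\Sym(d)}(\Sym(d))=\Sym(d)$, Lemma~\ref{lem:Aut(W)} yields $\Aut(W)=W$, whence $A_2$ is all of $\Aut(\mathcal T_{d,2})$ and $\acomm(\Aut(T))=\AAut(\mathcal T_{d,2})$, which is Neretin's group. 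Your direct approach is conceptually transparent and self-contained, but it amounts to reproving Lemmas~\ref{lem:Rover} and~\ref{lem:Aut(W)} in the special case $D=\Sym(d)$; the paper's route is quicker once those lemmas are in place, and has the advantage of making the equality $G=M(\Sym(d),2)$ explicit rather than a separate observation.
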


\begin{remark*}
	After completing this paper, we learned that Thomas Weigel has generalized Neretin's group
	in a different direction, namely by considering more general classes of rooted trees
	(but always taking the full group of almost automorphisms of such a rooted tree).
	This provides in particular other examples giving a negative answer to George Willis' question
	from \cite[Problem 4.3]{Willis07}; see \cite{Weigel}.
\end{remark*}

\subsection*{Acknowledgements}

We warmly thank Marc Burger for stimulating discussions and for suggesting the term \emph{germ of automorphism}.
We are grateful to Thomas Weigel for helpful comments on an earlier draft of this paper.
Furthermore, two anonymous referees did a wonderful job and provided us with a long list of very useful comments,
which allowed us to improve the exposition of the paper.
A part of this was written up while the first-named author was visiting the Hausdorff Institute for Mathematics in Bonn;
he thanks the institute for its hospitality.

\section{Preliminaries}\label{sec:prel}

Let $T$ be a locally finite tree.
We will use the notation $V(T)$ for the set of vertices of $T$, and $E(T)$ for the edge set.
Given $v \in V(T)$, we denote by $E(v)$ the set of edges containing $v$.
Recall that $\Aut(T)$ comes equipped with a natural topology, namely the {\bf permutation topology}, which is defined
by declaring a subgroup $U \leq \Aut(T)$ to be open if and only if $U$ contains the pointwise stabiliser
of some finite subset $S \subset T$.
This topology coincides with the topology of pointwise convergence, as well as with the compact-open topology
(when $V(T)$ is endowed with the discrete topology).
It is Hausdorff, totally disconnected and locally compact;
it is discrete if and only if there is a finite subset $S \subset T$ the pointwise stabiliser of which is trivial.

We first point out that the simple locally compact groups acting properly on $T$ can naturally be viewed as closed subgroups of $\Aut(T)$.

\begin{lemma}\label{lem:metr}
Assume that $G \in \mS$ is a group admitting a continuous and proper action on $T$.
Then $G$ is homeomorphic to its image in $\Aut(T)$, which is closed and metrisable.
\end{lemma}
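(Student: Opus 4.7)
\medskip

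\noindent\emph{Proof plan.} Write $\varphi : G \to \Aut(T)$ for the continuous homomorphism induced by the action. The first task is to see that $\varphi$ is injective. The kernel $\Ker(\varphi)$ is a closed normal subgroup of $G$, so by topological simplicity it is either trivial or all of $G$. In the second case the action is trivial and properness forces $G$ itself (= any point stabiliser) to be compact; but a non-discrete topologically simple totally disconnected locally compact group cannot be compact, for such a group would be profinite and hence (being topologically simple) finite, contradicting non-discreteness. Therefore $\Ker(\varphi) = 1$ and $\varphi$ embeds $G$ abstractly into $\Aut(T)$.

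Next I would promote this abstract embedding to a homeomorphism onto its image. Fix any vertex $v \in V(T)$. By properness the stabiliser $G_v$ is compact in $G$, while the stabiliser $K_v := \Aut(T)_v$ is a compact open subgroup of $\Aut(T)$. Injectivity of $\varphi$ gives $\varphi(G_v) = \varphi(G) \cap K_v$, which is therefore a compact open subgroup of $\varphi(G)$ for the subspace topology. The restriction $\varphi|_{G_v} : G_v \to \varphi(G_v)$ is a continuous bijection between compact Hausdorff spaces, hence a homeomorphism. Since $G_v$ is open in $G$ and $\varphi(G_v)$ is open in $\varphi(G)$, and both topologies are translation-invariant, the local homeomorphism at the identity spreads to a global homeomorphism $G \cong \varphi(G)$.

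To see that $\varphi(G)$ is closed in $\Aut(T)$, let $h \in \overline{\varphi(G)}$ and choose a net $(g_\alpha)$ in $G$ with $\varphi(g_\alpha) \to h$. Since $V(T)$ is discrete, $\varphi(g_\alpha) v = h v$ eventually, so $h v$ lies in the $\varphi(G)$-orbit of $v$; pick $g_0 \in G$ with $\varphi(g_0) v = h v$ and replace $h$ by $\varphi(g_0)^{-1} h$ so that $h \in K_v$. Because $K_v$ is open in $\Aut(T)$, the tail of the net $\varphi(g_\alpha)$ lies in $K_v$, i.e.\ in $\varphi(G_v)$; but $\varphi(G_v)$ is compact and hence closed in $\Aut(T)$, so $h \in \varphi(G_v) \subseteq \varphi(G)$. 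This shows the image is closed.

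Finally, since $T$ is locally finite its vertex set $V(T)$ is countable, and the permutation topology on $\Aut(T)$ coincides with the topology of pointwise convergence on a countable set, which is metrisable (e.g.\ by $d(f,g) = \sum_n 2^{-n}\mathbf{1}_{f(v_n) \neq g(v_n)}$ after an enumeration $V(T) = \{v_n\}$). Metrisability of $\varphi(G)$ as a subspace follows, completing all three assertions. I expect no serious obstacle here; the only subtle point is the argument that $\Ker(\varphi) \neq G$, which hinges on the non-existence of non-discrete compact topologically simple totally disconnected groups.
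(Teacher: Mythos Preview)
Your argument is correct and is precisely the ``standard compactness argument'' that the paper invokes in one line: properness gives compact vertex stabilisers, the continuous bijection $G_v \to \varphi(G_v)$ between compact Hausdorff spaces is a homeomorphism, and translation plus closedness of $\varphi(G_v)$ finish the job. The only minor divergence is that you obtain metrisability directly from the countability of $V(T)$, whereas the paper instead cites structure theory (Kakutani--Kodaira / Hewitt--Ross) to conclude that $G$ itself is metrisable and separable; your route is more self-contained here.
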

\begin{proof}
    Since the action is proper and $T$ is locally finite, vertex stabilisers are compact and
    the desired result follows from a standard compactness argument,
    recalling that $G$ is necessarily metrisable and separable (see \cite{KakutaniKodaira} or~\cite[Ch.~II, Th.~8.7]{HewittRoss}).
\end{proof}

From now on, we will only consider closed subgroups of $\Aut(T)$.
We will collect some interesting properties for compactly generated closed subgroups of $\Aut(T)$ contained in $\mS$.
To begin, we give a useful criterion (due to Bass and Lubotzky) for when a closed subgroup of $\Aut(T)$
is compactly generated.
\begin{defn}
    Assume that $G$ is a (closed) subgroup of $\Aut(T)$, such that
    $G$ stabilises a subtree $T' \leq T$ but no subtree of $T'$.
    Then we say that $T'$ is a {\bf minimal invariant subtree} of $T$ for the $G$-action.
\end{defn}
\begin{remark}\label{re:invT}
	\begin{compactenum}[(i)]
	    \item
		Assume that $G$ is a subgroup of $\Aut(T)$
		such that $G$ does not fix an end of $T$.
		Then there exists a minimal $G$-invariant subtree of $T$.
		If in addition $G$ fixes at most one vertex of $T$,
		then this minimal invariant subtree is unique.
		See \cite[Corollaire~3.5]{Tits:trees}.
	    \item
		If $T'$ is a minimal $G$-invariant subtree of $T$,
		then either $T'$ is infinite and has no endpoints (an endpoint is a vertex with valency $1$),
		or $T'$ consists of a single vertex or a single edge.
	\end{compactenum}
\end{remark}

The following fact is well-known; we include a detailed proof for the reader's convenience. 

\begin{lemma}\label{le:ccpt}
    Assume that $G$ is a closed subgroup of $\Aut(T)$,
    and that $T'$ is a minimal $G$-invariant subtree of $T$.
    Then $G$ is compactly generated if and only if its action on $T'$ is cocompact.

    In particular, if $G$ acts edge-transitively on $T$, then $G$ is compactly generated.
\end{lemma}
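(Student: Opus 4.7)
The plan is to prove the biconditional in the two natural directions and then read off the edge-transitive corollary. Throughout, I would record at the outset that since $T$ is locally finite and $G \leq \Aut(T)$ acts continuously, every vertex stabiliser $G_v$ is compact and open, so for any compact $K \subseteq G$ and any vertex $v$ the orbit $Kv$ is finite.

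For $(\Rightarrow)$, suppose $G$ is compactly generated and fix a compact symmetric generating set $K \ni 1$. Pick $v \in V(T')$ and let $T_0 \subseteq T'$ be the convex hull of the (finite) set $Kv$; then $T_0$ is a finite subtree. Set $\tilde T := \bigcup_{g \in G} g T_0 \subseteq T'$. The set $\tilde T$ is manifestly $G$-invariant, so by minimality of $T'$ it suffices to show that $\tilde T$ is connected (hence a subtree), whence $\tilde T = T'$ and cocompactness follows. Connectedness is where the argument lives: for any $g \in G$ and $k \in K$, the vertex $gv$ lies in $gT_0$ trivially, and it also lies in $gkT_0$ because $k^{-1}v \in Kv \subseteq T_0$ (using $K = K^{-1}$). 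Hence for any $h = k_1 \cdots k_n$ with $k_i \in K$, the chain $T_0,\, k_1 T_0,\, k_1 k_2 T_0,\, \ldots,\, h T_0$ has each pair of consecutive translates sharing a vertex, so $\tilde T$ is path-connected.

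For $(\Leftarrow)$, suppose $G$ acts cocompactly on $T'$, and choose a finite subtree $D \subseteq T'$ with $G \cdot D = T'$. Here I would appeal to the standard Bass--Serre-type description of groups acting on trees with compact open vertex stabilisers (cf.\ Bass--Lubotzky and Serre's \emph{Trees}): after possibly enlarging $D$ one may take it to be a fundamental domain, and then $G$ is generated by the (finitely many) compact vertex stabilisers $G_w$ for $w \in V(D)$ together with finitely many ``edge'' elements, one for each edge of the quotient graph $G \backslash T'$ outside a chosen spanning tree. A finite union of compact sets together with finitely many extra elements is compact, so $G$ is compactly generated.

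For the final assertion, suppose $G$ acts edge-transitively on $T$. If $G$ fixes a vertex $v$, then by edge-transitivity every edge of $T$ is incident to $v$, so $T$ is a star, $G$ embeds into the compact stabiliser $\Aut(T)_v$, and $G$ is compact; otherwise any $G$-invariant subtree containing an edge must contain all edges, hence equals $T$, so $T$ is itself a minimal $G$-invariant subtree and the action on $T$ has one edge orbit and at most two vertex orbits, which is cocompact. The previous direction then delivers compact generation. The main obstacle is really the $(\Leftarrow)$ direction, where one must invoke the Bass--Serre decomposition to bound the number of generators needed beyond the compact vertex stabilisers; the $(\Rightarrow)$ direction reduces to the neat connectedness observation about translates of $T_0$ above.
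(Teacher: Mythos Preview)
Your proposal is correct. The $(\Rightarrow)$ direction is essentially identical to the paper's: both take a finite convex hull of the orbit of a base vertex under a compact generating set, show that consecutive translates overlap, and conclude by minimality of $T'$ that the union of translates fills $T'$.

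The $(\Leftarrow)$ direction is where you genuinely diverge. You invoke the Bass--Serre structure theorem to conclude that $G$ is generated by finitely many compact vertex stabilisers together with finitely many extra elements coming from the quotient graph. This is correct (modulo the routine passage to the barycentric subdivision if $G$ inverts an edge), but it is a black box. The paper instead gives a short direct argument: take a finite convex fundamental domain $C$, let $H$ be the compact set of elements mapping a fixed $v \in C$ back into $C$, and for each vertex $v_i$ adjacent to $C$ but outside it choose one $g_i \in G$ pulling $v_i$ into $C$; then an induction on the distance from $v^g$ to $C$ shows that $H \cup \{g_1,\dots,g_n\}$ generates $G$. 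Your approach buys brevity at the cost of citing a structural theorem; the paper's approach is self-contained and makes explicit a compact generating set. Both are fine, and your elaboration of the edge-transitive corollary (separating the case where $G$ fixes a vertex from the case where $T$ itself is minimal) is more careful than the paper, which leaves that step implicit.
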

\begin{proof}
    Observe that the action of $G$ on any locally finite graph $\Gamma$ is cocompact
    if and only if $G$ has finitely many orbits on $\Gamma$.
    (Indeed, a compact subset of $\Gamma$ contains only finitely many vertices and edges.)
%
%
    So assume first that the action of $G$ on $T'$ is cocompact;
    then $G$ has finitely many orbits on $T'$.
    Let $C$ be the convex hull of a finite fundamental domain for the action of $G$ on $T'$;
    then $C$ is a finite subtree of $T'$.
    Fix some $v \in C$, and let $H$ be the set of elements of $G$ mapping $v$ into $C$.
    Then $H$ is the union of a finite number of cosets of the compact open subgroup $G_v \leq G$,
    and hence $H$ is a compact subset of $G$.
    Now let $\{ v_1,\dots,v_n \}$ be the sets of vertices adjacent to some vertex of $C$,
    but not contained in $C$.
    Then for each $v_i$, there is an element $g_i \in G$ mapping $v_i$ into $C$.
    Let $S = \{ g_1,\dots,g_n \}$, and fix some $v \in C$;
    then for each $g \in G$, there is some $s \in \langle S \rangle$ mapping
    $v^g$ into $C$ (this follows by induction on the distance from $v^g$ to $C$).
    But then $gs$ maps $v$ into $C$, and hence $gs \in H$.
    We conclude that the compact set $H \cup S$ generates $G$.

    Conversely, assume that $G = \langle S \rangle$ for some compact set $S \subseteq G$.
    Let $v \in V(T')$ be arbitrary;
    then $U := G_v$ is a compact open subgroup of $G$.
    Then $S$ is covered by the open sets $Ug$, where $g$ runs through the elements of~$S$,
    and since $S$ is compact, there is a finite subcover $S \subseteq \{ U g_i \mid 1 \leq i \leq k \}$.
    It follows that $G$ is generated by the compact set $S' = U \cup \{ g_1, \dots, g_k \}$.
    For each $i=1,\dots,k$, let $v_i = v \cdot g_i$, and let $F$ be the (finite) convex hull of
    $v, v_1, \dots, v_k$;
    then $F \cup F \cdot s$ is connected, for each $s \in S'$.
    Now let $T''$ be the union of all $G$-translates of $F$.
    Since $G = \langle S' \rangle$, it follows that $T''$ is connected,
    and hence it is a subtree of $T'$;
    it is clearly $G$-invariant, and hence $T'' = T'$ by the minimality of $T'$.
    Since $F$ is finite, this shows that $G$ has only finitely many orbits on $T'$,
    and we conclude that $G$ acts cocompactly on $T'$.
\end{proof}

\begin{defn}
	Recall that an element $g \in \Aut(T)$ acting without inversion is called {\bf elliptic} if it fixes some point,
	and it is called {\bf hyperbolic} otherwise.
	Every hyperbolic element $g \in \Aut(T)$ stabilises a geodesic line, called the {\bf axis} of $g$, on which $g$
	acts by a non-trivial translation.
\end{defn}
Observe that if $G \in \mS$ is a subgroup of $\Aut(T)$, then $G$ acts without inversion on $T$
(else it would contain an index $2$ subgroup).
\begin{lemma}\label{lem:basic}
	Assume that $G \in \mS$ is a closed subgroup of $\Aut(T)$ that is compactly generated.
	Then:
	\begin{enumerate}[\rm (i)]
	    \item
		Every compact subgroup of $G$ is contained in a vertex stabiliser (which is compact and open).
	    \item
		The group $G$ does not fix an end of $T$.
	    \item
		Every ascending chain of compact open subgroups of $G$ stabilises;
		in particular, every compact subgroup of $G$ is contained in a maximal one.
	    \item
		Any open subgroup $H \leq G$ which is not compact contains a hyperbolic element.
	    \item
		If an open subgroup $H \leq G$ fixes an end of $T$, then either $H$ is compact, or $H$ stabilises a geodesic line of $T$.
	    \item
		For every open subgroup $H \leq G$, there exists a minimal $H$-invariant subtree of $T$.
	\end{enumerate}
	Moreover, if the action of $G$ on $T$ is edge-transitive, then:
	\begin{enumerate}[\rm (i)]\setcounter{enumi}{6}
	    \item
		The vertex stabilisers are precisely the maximal compact subgroups of $G$;
		they fall into two conjugacy classes, which correspond to the bipartition of $T$.
	    \item
		Any tree $T'$ admitting a continuous proper edge-transitive
		action of $G$ is isomorphic to $T$.
	\end{enumerate}
\end{lemma}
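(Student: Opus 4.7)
My strategy is to combine the Bruhat--Tits fixed-point theorem with the Busemann cocycle attached to any fixed end of $T$, using simplicity of $G$ and compact generation as the two main obstructions to pathology. As a preliminary, $G$ acts without inversion on $T$: otherwise the sign homomorphism $G \to \ZZ/2$ coming from the bipartition of $T$ would have proper closed kernel, contradicting simplicity. This gives (i) immediately: a compact $K \subseteq G$ has bounded orbits in $T$ by local finiteness, so by Bruhat--Tits it fixes a vertex $v$, and $G_v$ is open by definition of the permutation topology while being the inverse limit of its finite actions on balls around $v$, hence profinite. For (ii), suppose $G$ fixed an end $\xi$ and consider the continuous Busemann cocycle $\beta_\xi\colon G \to \ZZ$; if non-trivial, its kernel is a proper closed normal subgroup, a contradiction; if trivial, every element fixes a tail of any ray $(v_n)$ to $\xi$, so $G = \bigcup_n G_{v_n}$, and compact generation forces a generating compactum into some $G_{v_N}$ and hence $G = G_{v_N}$. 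But a non-discrete topologically simple profinite group does not exist (any proper open subgroup contains a proper open normal one), contradicting non-discreteness of $G$.

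Items (iv) and (v) are the technical heart, and I would handle them together. For (v), split on whether $\beta_\xi|_H$ is trivial. In the non-trivial case, choose $h \in H$ hyperbolic with $\beta_\xi(h)$ of minimal positive value; its axis $A$ has $\xi$ as one end. For any $g \in H$, the axis $gA$ of $ghg^{-1}$ also ends at $\xi$, so $A$ and $gA$ share a common ray $\rho$ to $\xi$ on which both $h$ and $ghg^{-1}$ act as the same translation. The commutator $[h^{-1}, g]$ then lies in $\ker \beta_\xi \cap H$ and fixes $\rho$ pointwise, and an argument combining minimality of $\beta_\xi(h)$ with the structure of this elliptic commutator forces $gA = A$, so $H$ stabilises $A$. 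In the trivial case, every element of $H$ is elliptic and fixes a tail of any ray to $\xi$, so $H = \bigcup_N (H \cap G_{v_N})$; openness of $H$ in $G$ together with (ii) --- which guarantees that $G$ does not fix $\xi$ and thus provides enough conjugation mobility outside $H$ --- is then used to show this ascending chain must stabilise, forcing $H$ compact. Given (v), (iv) follows from the standard dichotomy for tree actions: an open $H$ all of whose elements are elliptic fixes a vertex (so $H \subseteq G_v$ is compact) or an end, and in the latter case (v) yields either compactness or a stabilised line $L$; the induced map of $H$ to the isometry group $\mathrm{Isom}(L) \cong \ZZ \rtimes \ZZ/2$ has finite image (otherwise a translation in the image would lift to a hyperbolic in $H$), so an index-$\leq 2$ subgroup of $H$ fixes $L$ pointwise and $H$ is compact, contradiction.

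Property (iii) then follows cleanly from (i) and (iv): the union $K_\infty$ of an ascending chain of compact open subgroups is open, every element of $K_\infty$ lies in some compact $K_i$ and is therefore elliptic, so (iv) forces $K_\infty$ compact; by (i), $K_\infty \subseteq G_v$ for some $v$, and the chain stabilises by the ACC for open (hence finite-index) subgroups of the profinite group $G_v$. Assertion (vi) is immediate from Remark~\ref{re:invT}(i) when $H$ does not fix an end and from (v) otherwise. Under edge-transitivity, (vii) is proved by noting that if $G_v \subsetneq K \subseteq G_{v'}$ with $K$ compact then $G_v$ fixes the segment $[v,v']$ by (i), hence an edge at $v$; but edge-transitivity of $G$ makes $G_v$ transitive on $E(v)$, ruling this out, so $G_v$ is maximal compact, and the two conjugacy classes of maximal compact subgroups correspond to the two $G$-orbits on $V(T)$ given by the bipartition. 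Finally, (viii) is a Bass--Serre reconstruction: both $T$ and $T'$ exhibit $G$ as the fundamental group of the same segment of groups $G_v \ast_{G_e} G_w$ prescribed by the conjugacy classes of vertex and edge stabilisers, so $T$ and $T'$ are $G$-equivariantly isomorphic.

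The principal obstacle is the trivial-Busemann branch of (v): ascending unions of vertex stabilisers along a ray can be non-compact in general (as in $\Aut(T_d)$), and collapsing such a union genuinely requires that $H$ is open in a simple group $G$ that does not itself fix $\xi$. Some care is also needed with the axis-uniqueness argument in the non-trivial Busemann case, and with avoiding circularity between (iii), (iv), (v) and (vi) --- which is why I would prove (v) first, then deduce (iv), and finally (iii) and (vi).
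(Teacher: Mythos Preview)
Your treatment of (i), (ii), (vi) and (vii) is essentially the paper's. The genuine problem is the ordering $(v)\Rightarrow(iv)\Rightarrow(iii)$: both branches of your argument for (v) tacitly use (iii), so the circularity you flag at the end is real and unresolved.

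In the trivial-Busemann branch you assert that ``openness of $H$ together with (ii) \dots\ provides enough conjugation mobility'' to make the chain $H=\bigcup_N(H\cap G_{v_N})$ stabilise. But that chain is an ascending chain of compact open subgroups of $G$, and its stabilisation is exactly statement (iii); no mechanism is offered. In the non-trivial branch, your commutator $c=h^{-1}(ghg^{-1})$ does fix the common ray $\rho=A\cap gA$ pointwise, but this does not force $gA=A$: the axes $A$ and $gA$ agree towards $\xi$ yet may diverge towards the other end, and neither minimality of $\beta_\xi(h)$ nor ellipticity of $c$ pins down $g\eta$. What is actually needed is that $H_0=\ker(\beta_\xi|_H)$ be \emph{compact}; then $h$ preserves the fixed tree $T^{H_0}$, so the axis $\lambda$ of $h$ (the minimal $h$-invariant subtree) lies in $T^{H_0}$, and $H=\langle H_0,h\rangle$ stabilises $\lambda$. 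Compactness of $H_0$ is again an instance of (iii).

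The paper breaks the circle by proving (iii) \emph{first} and independently, via Haar measure. After passing to the minimal $G$-invariant subtree, $G$ acts cocompactly (Lemma~\ref{le:ccpt}), so there are only finitely many conjugacy classes of vertex stabilisers; since $G$ is simple it is unimodular, hence the Haar volumes of all vertex stabilisers are uniformly bounded. By (i) every compact open subgroup sits in a vertex stabiliser, so volumes of compact open subgroups are uniformly bounded and any ascending chain of them stabilises. With (iii) in hand, (iv) follows (an open $H$ with only elliptic elements fixes a vertex or an end; in the latter case it is an ascending union of compact opens along a ray, which stabilises), and then (v) follows as above. This unimodularity/volume argument is the missing ingredient in your sketch.

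For (viii) the paper does not invoke Bass--Serre theory; it simply reads off the two valencies of the biregular tree as $\min\{[U_i:U_1\cap U_2]:U_1\in\mathcal U_1,\ U_2\in\mathcal U_2\}$ for $i=1,2$, where $\mathcal U_1,\mathcal U_2$ are the two conjugacy classes of maximal compact subgroups supplied by (vii). Your reconstruction argument would give more (a $G$-equivariant isomorphism) but needs an extra check that the edge groups in the two amalgam decompositions coincide.
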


\begin{proof}
\begin{compactenum}[\rm (i)]
    \item
	Any bounded subset of $T$ admits a canonical centre, hence compact
	subgroups all have a fixed point. Since $G$ acts without inversion,
	a subgroup which fixes a point of $T$ necessarily fixes a vertex.
    \item
	Suppose that $G$ fixes an end $\xi \in \partial(T)$.
	Let $\chi_\xi \colon G \to \ZZ$ be the corresponding Busemann character;
	then for any $g \in G$, $\chi_\xi(g) = 0$ precisely when $g$ is elliptic.
	Since $G$ is simple, the image of $\chi_\xi$ is trivial, hence $G$ consists of elliptic elements only.
	Therefore $G$ stabilises each horoball centred at $\xi$.
	Since the family of all these horoballs is nested,
	it follows that $G$ is the union of an ascending chain of compact subgroups.
	Since $G$ is compactly generated, however, this chain is stabilising, and hence $G$ is compact.
	The desired statement follows since a simple profinite group is necessarily finite, hence it cannot belong to $\mathcal S$.
    \item
	By (ii) and Remark~\ref{re:invT}(i), there exists a minimal $G$-invariant subtree $T'$ of $T$;
	since $G$ is simple, its action on $T'$ is faithful, so there is no loss of generality in
	assuming $T' = T$.
	By Lemma~\ref{le:ccpt}, the action of $G$ on $T$ is now cocompact, \emph{i.e.}\@ $G$ has only finitely many
	orbits on $V(T)$.
	In particular, there are only finitely many conjugacy classes of vertex stabilisers in $G$.

	Every compact open subgroup has finite positive volume;
	since $G$ is simple, it is unimodular, and hence the Haar measure is conjugacy invariant.
	This implies that the volume of any vertex stabiliser is bounded above by a constant;
	since every compact subgroup fixes a vertex by (i),
	it follows that the volume of any compact subgroup is bounded,
	and hence any ascending chain of compact open subgroups stabilises.
    \item
	It is well known that if a group acts on a tree in such a way that every element has a fixed point,
	then the whole group has a fixed point or a fixed end;
	see \cite[Proposition 3.4]{Tits:trees} or \cite[Exercise~2, p.~66]{Serre}.
	Assume that $H \leq G$ is open and contains only elliptic elements.
	If $H$ has a fixed point in $T$, then $H$ is compact and we are done.
	Otherwise $H$ fixes an end $\xi \in \partial T$ and since every element of $H$ is elliptic,
	it follows that $H$ stabilises each horoball centred at $\xi$,
	and hence $H$ is the union of an ascending chain of compact subgroups.
	Since $H$ is open and since every compact subgroup is contained in some compact open subgroup by (i),
	this implies that $G$ contains an ascending chain of compact open subgroups, the union of which is $H$.
	By (iii), this chain is stationary, which implies that $H$ is compact as desired.
    \item
	Let $\xi \in \partial(T)$ be an end fixed by $H$, and assume that $H$ is non-compact.
	Let $\chi_\xi \colon H \to \ZZ$ be the corresponding Busemann character, and let $H_0 = \ker(\chi_\xi)$.
	Then $H_0$ is open, and since it stabilises all horoballs centred at $\xi$, it is a union of compact subgroups;
	hence $H_0$ is a compact open normal subgroup of $H$ by (iii).
	It follows that $H$ acts on the fixed tree $T^{H_0}$ of $H_0$.
	Since $H$ is non-compact, it contains a hyperbolic element by (iv),
	and hence the image of $\chi_\xi$ is non-trivial.
	Let $h \in H$ be such that $\chi_\xi(h)$ generates $\chi_\xi(H)$;
	then $H = \langle H_0, h \rangle$.
	Let $\lambda$ be the axis of the hyperbolic element $h$;
	then $\lambda \subseteq T^{H_0}$ because $\lambda$ is the smallest subtree invariant under $h$.
	We conclude that $H$ stabilises the axis of the hyperbolic element $h$.
    \item
	If $H$ does not fix an end of $T$, then the result follows from Remark~\ref{re:invT}(i).
	So assume that $H$ fixes an end of $T$.
	Then by (v), either $H$ is compact, in which case $H$ fixes a vertex;
	or $H$ stabilises a geodesic line $\lambda$ of $T$,
	so in particular any $H$-invariant subtree contains $\lambda$.
	The intersection of all $H$-invariant subtrees of $T$ is thus non-empty, and clearly minimal. 
    \item
	Assume that some vertex stabiliser $G_v$ is not maximal compact, say $G_v \leq H$ with $H$ compact;
	then by (i), there is another vertex $w$ such that $G_v \leq G_w$, and
	by considering the path from $w$ to $v$, we may assume that $w$ is a neighbour of $v$.
	This would imply that every element of $G$ fixing $v$ would fix the edge $\{v,w\}$,
	but this contradicts the edge-transitivity.

	Since $G$ acts edge-transitively and without inversion, it has precisely two orbits on the vertices of $T$;
	the second claim follows.
    \item
	Since the $G$-action is edge-transitive, the tree $T$ is biregular.
	It suffices to show that the two valencies of the two classes of vertices are uniquely determined by $G$.
	Denoting by $\mathcal U_1$ and $\mathcal U_2$ the two conjugacy classes of maximal compact subgroups of $G$ provided by~(vii),
	one verifies easily that these two degrees coincide with
	\[ \min \{ |U_1: U_1 \cap U_2|,  \; U_1 \in \mathcal U_1,  \; U_2 \in \mathcal U_2\} \phantom{\,.} \]
	and
	\[ \min \{ |U_2: U_1 \cap U_2|,  \; U_1 \in \mathcal U_1,  \; U_2 \in \mathcal U_2\} \,, \]
	respectively.
    \qedhere
\end{compactenum}
\end{proof}


\section{Groups with Tits' independence property}\label{sec:Tits}

\subsection{Tits' independence property}\label{ss:Tits}

The following remarkable theorem of Tits \cite{Tits:trees} shows that if $G$ is in some sense
large enough, then $G$ has a (usually rather big) abstractly simple subgroup.
We first make our definition of ``large enough'' precise.
\begin{defn}
	Let $G \leq \Aut(T)$, let $C$ be a (finite or infinite) chain of $T$, and let $F$ be the pointwise
	stabilizer of $C$ in $G$.
	For each vertex $v$ of $T$, we denote by $\pi(v)$ the vertex of $C$ closest to $v$.
	The vertex sets $\pi^{-1}(c)$ with $c \in V(C)$ are all invariant under $F$; let $F_c$ denote
	the permutation group obtained by restricting the action of $F$ to $\pi^{-1}(c)$.
	Then there is a natural homomorphism
	\[ \varphi \colon F \to \prod_{c \in V(C)} F_c \,. \]
	We say that $G$ satisfies {\bf Tits' independence property}
	if and only if $\varphi$ is an isomorphism.
\end{defn}
\begin{remark}
	If $G \leq \Aut(T)$ is a {\em closed} subgroup, then $G$ satisfies Tits' independence property
	if and only if for every edge $e \in E(T)$, the pointwise edge stabiliser $G_{(e)}$ can be decomposed as
	$G_{(e)} = G_{(h_1)} G_{(h_2)}$, where $h_1$ and $h_2$ are the rooted half-trees emanating from the
	endpoints of $e$, \emph{i.e.}\@ $E(T)$ is the disjoint union of $E(h_1)$, $e$ and $E(h_2)$.
	See, for example, \cite[Lemma~10]{Am}.
\end{remark}
\begin{theorem}[\cite{Tits:trees}]\label{th:tits}
	Let $T$ be a simplicial tree, $G$ a subgroup of $\Aut(T)$, and $G^+$ the subgroup of $G$ generated
	by the edge-stabilizers.
	Assume that $G$ does not stabilize a proper subtree, and that $G$ does not fix an end of $T$.
	If $G$ satisfies Tits' independence property, then $G^+$ is simple (or trivial).
\end{theorem}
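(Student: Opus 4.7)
Let $N$ be a non-trivial normal subgroup of $G^+$; I aim to show $N = G^+$. The plan proceeds in three stages: (i) locate an edge of $T$ on which some element of $N$ exhibits strong ``displacement''; (ii) combine Tits' independence with a commutator computation to place a non-trivial element of $N$ inside a half-tree pointwise stabiliser; (iii) iterate and propagate to conclude that every half-tree stabiliser $G_{(h)}$ lies in $N$, hence $N$ contains all edge stabilisers and therefore equals $G^+$.

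For stage (i), pick any $n \in N \setminus \{1\}$; I seek an edge $e$ with associated rooted half-trees $h_1, h_2$ on the two sides of $e$ such that $n(h_1) \subsetneq h_2$, after possibly swapping labels. If $n$ is elliptic, the fixed subtree $\Fix(n)$ is a proper subtree of $T$, so there is an edge $e = [v_1, v_2]$ with $v_1 \in \Fix(n)$ and $v_2 \notin \Fix(n)$; since $n$ fixes $v_1$ and sends $v_2$ to a different neighbour of $v_1$, one obtains $n(h_2) \subsetneq h_1$ where $h_i$ is the half-tree containing $v_i$. If $n$ is hyperbolic with axis $\ell$, take an edge $e = [p, p']$ perpendicular to $\ell$, with $p \in \ell$ and $p' \notin \ell$; letting $h_1$ be the half-tree containing $p'$ (a branch off $\ell$ at $p$), the image $n(h_1)$ is the analogous branch at $n(p) \neq p$, hence $n(h_1) \subsetneq h_2$.

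For stage (ii), let $g \in G_{(h_2)}$. Since $n(h_1) \subsetneq h_2$, the complementary inclusion $n(h_2) \supseteq h_1$ gives $n g n^{-1} \in G_{(n(h_2))} \subseteq G_{(h_1)}$. Tits' independence provides the internal direct decomposition $G_{(e)} = G_{(h_1)} \times G_{(h_2)}$, so $G_{(h_1)}$ and $G_{(h_2)}$ commute element-wise, and
\[
[n, g] \;=\; (n g n^{-1}) \cdot g^{-1} \;\in\; N \cap G_{(e)}
\]
has $G_{(h_1)}$-component $a := n g n^{-1}$ and $G_{(h_2)}$-component $b := g^{-1}$. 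Conjugating by any $s \in G_{(h_1)}$, which commutes with $b$, a short calculation yields $[s, [n, g]] = [s, a] = [s, n g n^{-1}] \in N \cap G_{(h_1)}$. Choosing $g$ and $s$ so that $[s, n g n^{-1}] \neq 1$ --- possible generically, and provable from the richness of the $G$-action on $h_2$ together with the no-invariant-subtree hypothesis --- supplies a non-trivial element of $N \cap G_{(h_1)}$.

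For stage (iii), the upgrade from $N \cap G_{(h_1)} \neq \{1\}$ to $G_{(h_0)} \subseteq N$ for some half-tree $h_0$ is obtained by reapplying stages (i) and (ii) with a non-trivial $n_1 \in N \cap G_{(h_1)}$ in place of $n$, now localising the construction inside the subtree $h_1$, and combining the resulting commutators via the product decomposition $G_{(e')} = G_{(h'_1)} G_{(h'_2)}$ to saturate some $G_{(h_0)}$. Normality of $N$ in $G^+$ then ensures that $\mathcal{H} := \{h : G_{(h)} \subseteq N\}$ is closed under the action of $G^+$ on half-trees; the absence of a proper $G$-invariant subtree and of a fixed end forces the $G^+$-orbit of a single half-tree to cover every edge, so $G^+ = \la G_{(h)} : h \in \mathcal H \ra \subseteq N$. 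The main obstacle I anticipate is stage (iii): extracting the full $G_{(h_0)}$ from a single non-trivial element of $N \cap G_{(h_1)}$ is the technical heart of the argument and requires careful iteration of the commutator trick in tandem with the product decomposition at all scales.
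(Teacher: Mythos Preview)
The paper does not prove this theorem; it is quoted from Tits' original article \cite{Tits:trees} and used as a black box. So there is no ``paper's own proof'' to compare against. That said, your outline follows the classical strategy, and stages~(i) and~(ii) are essentially correct and are indeed the opening moves of Tits' argument.

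The gap is stage~(iii), which you yourself flag. Two concrete issues. First, a slip of notation: your element $n_1$ lies in $G_{(h_1)}$, hence fixes $h_1$ pointwise and acts only on $h_2$; any ``localisation'' must therefore take place inside $h_2$, not $h_1$. Second, and more seriously, simply iterating stages~(i)--(ii) produces non-trivial elements of $N$ supported on ever smaller half-trees; it does not by itself yield an inclusion $G_{(h_0)} \subseteq N$ for any half-tree $h_0$. Passing from ``$N$ meets $G_{(h)}$ non-trivially'' to ``$N$ contains $G_{(h)}$'' is exactly the point that needs an idea, and your sketch does not supply one.

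The missing ingredient in the standard proof is a global step rather than a local iteration: one first shows, using only that $N$ is non-trivial and normalised by $G^+$ together with the hypotheses on $G$, that $N$ itself leaves no proper subtree invariant and fixes no end. This makes the displacement property available at \emph{every} edge simultaneously: for each edge $e$ one can find $n\in N$ with $n(h_1)\subsetneq h_2$. Your stage~(ii) computation then shows, for every edge, that the projection of $N\cap G_{(e)}$ onto each factor of $G_{(e)} = G_{(h_1)}\times G_{(h_2)}$ is surjective; a short additional commutator argument (playing the two edges $e$ and $n(e)$ against each other) upgrades this to $G_{(h)}\subseteq N$ for every half-tree $h$, whence $G^+\subseteq N$. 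I would encourage you to consult Tits' paper for the precise execution of this last step.
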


\subsection{Going from $G$ to $G^+$}

As we will see, the group $G^+$ can be characterised topologically as the \textbf{monolith} of $G$, \emph{i.e.} the unique minimal closed normal subgroup of $G$ or, equivalently, the intersection of all non-trivial closed normal subgroups of $G$. We first recall the definition of the quasi-centre, which was introduced by Burger and Mozes in \cite{Burger-Mozes1}.
\begin{defn}\label{def:QZ}
	The {\bf quasi-centre} $\QZ(G)$ of $G$ is
	the characteristic (but not necessarily closed) subgroup
	consisting of all those elements admitting an open centraliser.
	(This group is sometimes called the {\em virtual centre} $\mathrm{VZ}(G)$ by some authors.)
\end{defn}
The following elementary but important fact is well known, and we will use it frequently without explicitly mentioning it.
\begin{lemma}
	If $N \unlhd G$ is a discrete normal subgroup, then $N \leq \QZ(G)$.
	In particular, if $\QZ(G) = 1$, then every non-trivial normal subgroup is non-discrete.
\end{lemma}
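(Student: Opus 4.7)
The plan is to exploit the continuity of conjugation together with the fact that a discrete subset carries the discrete topology from the ambient group. Fix $n \in N$. Since $N$ is normal in $G$, the conjugation map
\[ c_n \colon G \to N \colon g \mapsto g n g\inv \]
takes values in $N$. As a map into $G$ it is continuous, and because $N$ carries the subspace topology induced from $G$, the corestriction $c_n \colon G \to N$ is continuous as well.

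Since $N$ is discrete, the singleton $\{n\}$ is open in $N$. Its preimage under $c_n$ is precisely the centraliser
\[ \centra_G(n) = \{ g \in G \mid g n g\inv = n \}, \]
which is therefore open in $G$. This is exactly the definition of $n$ belonging to $\QZ(G)$ (Definition~\ref{def:QZ}), so $N \leq \QZ(G)$.

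The second assertion is then immediate: if $\QZ(G) = 1$ and $N \unlhd G$ is discrete, then $N \leq \QZ(G) = 1$, so $N$ is trivial; equivalently every non-trivial closed normal subgroup fails to be discrete. There is no real obstacle here, the only subtle point being to observe that conjugation by a fixed element, viewed as a map from $G$ into the discrete group $N$, has open fibres.
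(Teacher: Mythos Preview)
Your proof is correct and follows essentially the same approach as the paper: both arguments use that the conjugacy class of $n \in N$ lies in the discrete set $N$, so the conjugation map $g \mapsto g n g\inv$ has open fibres, whence the centraliser of $n$ is open. You have simply spelled out the continuity argument that the paper compresses into the phrase ``$g^G$ is a discrete conjugacy class implies $g$ has an open centraliser''. One minor remark: in the final sentence you insert the word ``closed'' (``every non-trivial closed normal subgroup''), which is unnecessary since the argument applies to any normal subgroup.
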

\begin{proof}
	Let $N \unlhd G$ be a discrete normal subgroup.
	Let $g \in N$ be arbitrary; then $g^G$ is a discrete conjugacy class.
	This implies that $g$ has an open centraliser, and hence $g \in \QZ(G)$.
\end{proof}
\begin{prop}
	Let $G$ be a closed subgroup of $\Aut(T)$ satisfying Tits' independence property,
	and assume that $G$ does not stabilize a proper subtree, and that $G$ does not fix an end of $T$.
	Let $G^+$ be the subgroup generated by its edge stabilisers.
	Then
	\begin{compactenum}[\rm (i)]
	    \item
		$\QZ(G) = 1$.
	    \item
		$G^+$ is the unique minimal closed normal subgroup of $G$.
	\end{compactenum}
\end{prop}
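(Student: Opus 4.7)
The plan is to prove (i) directly via a displacement argument using Tits' independence, and to deduce (ii) from (i) through a standard monolith argument.

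For (i), suppose for contradiction that some $g \in \QZ(G)$ is non-trivial. Then $C_G(g)$ is open, hence contains the pointwise stabiliser $G_{(F)}$ of a finite subtree $F \subset T$. The commutation $[g, G_{(F)}] = 1$ yields $G_{(F)} = g G_{(F)} g^{-1} = G_{(g(F))}$, and iterating shows that $G_{(F)}$ fixes pointwise the subtree $F' := \bigcup_{n \in \ZZ} g^n(F)$. The heart of the proof is to exhibit a half-tree $h$ of $T$, bounded by an edge $e$, satisfying three properties: (a) $F' \subseteq T \setminus h$, so that the fixator $U_h \leq G$ (the pointwise stabiliser of $T \setminus h$, furnished by Tits' independence) is contained in $G_{(F)} \leq C_G(g)$; (b) $h \cap g(h) = \emptyset$; and (c) $U_h \neq 1$. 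Given such an $h$, the commutation $[g, U_h] = 1$ gives $U_h = g U_h g^{-1} = U_{g(h)}$; since $U_h$ is supported on $h$ and $U_{g(h)}$ on $g(h)$, property (b) forces $U_h = 1$, contradicting (c).

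The production of $h$ splits into two cases. If $g$ is hyperbolic with axis $\alpha$, then $F' \supseteq \alpha$, so $G_{(F)}$ fixes $\alpha$ pointwise; picking $v \in \alpha$ at distance greater than the diameter of $F$ from $F$ and an off-axis neighbour $w$ of $v$, the half-tree at $\{v, w\}$ on the $w$-side automatically satisfies (a) and (b), since $g(h)$ is an off-axis subtree attached at $g(v) \neq v$. If $g$ is elliptic, the orbit $F'$ is bounded, hence finite as $T$ is locally finite; after enlarging $F$ one may assume $F$ is $g$-invariant, and then any edge $e$ far from $F$ with $g(e) \cap e = \emptyset$ (which exists since $\Fix(g) \subsetneq T$) yields a suitable $h$ on the side of $e$ opposite $F$. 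The main obstacle is property (c). Since the set of half-trees $h$ with $U_h \neq 1$ is $G$-invariant, one reduces to checking that this set is non-empty in the relevant region. This is where minimality of the $G$-action and the absence of a fixed end become essential: they rule out the degenerate case in which every edge stabiliser would be supported on a single half-tree, a rigidity that, via Tits' independence, would compel $G$ to fix an end or stabilise a proper subtree.

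For (ii), granted (i), the subgroup $G^+$ is open since it contains $G_{(e)}$ for every edge $e$, and it is normalised by $G$; open subgroups being closed, $G^+$ is closed and normal in $G$. Let $N$ be a non-trivial closed normal subgroup of $G$; I show $G^+ \leq N$. By Theorem~\ref{th:tits}, $G^+$ is abstractly simple, so the normal subgroup $N \cap G^+$ of $G^+$ is either $G^+$, in which case $G^+ \leq N$ as desired, or trivial. In the latter case, $N$ and $G^+$ are normal subgroups of $G$ with trivial intersection and therefore commute; every element of $N$ thus centralises $G_{(e)}$ for every edge $e$, hence has open centraliser and lies in $\QZ(G)$, which equals $1$ by (i), contradicting $N \neq 1$.
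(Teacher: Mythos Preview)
Your argument for (ii) is correct and essentially the paper's: both use that $G^+$ is open and simple (Theorem~\ref{th:tits}), so a non-trivial closed normal $N$ with $N \cap G^+ = 1$ would be discrete and hence contained in $\QZ(G)$.

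For (i) your displacement strategy is genuinely different from the paper's, but there is a real gap at property~(c). You need $U_h \neq 1$ for the \emph{specific} half-tree $h$ already constrained by (a) and (b), and the justification offered --- that minimality and the absence of a fixed end ``rule out the degenerate case in which every edge stabiliser would be supported on a single half-tree'' --- does not establish this. The set of half-trees satisfying (a) and (b) depends on $g$ and $F$ and is not $G$-invariant, so the $G$-invariance remark does not reduce the problem. More seriously, the degenerate situation you dismiss actually occurs under the stated hypotheses: a non-abelian free group acting on its Cayley tree satisfies Tits' independence (trivially, since all edge-fixators vanish), acts minimally without fixed end, and has $U_h = 1$ for every $h$ --- yet $\QZ(G) = G$. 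So any correct argument must pin down exactly where non-triviality of the relevant fixators comes from, and yours does not. There are also smaller loose ends: in the hyperbolic case one must ensure $g^n(F) \cap h = \varnothing$ for every $n$, not only $n = 0$; in the elliptic case it takes more care to arrange $h \cap g(h) = \varnothing$ and $F \cap h = \varnothing$ simultaneously.

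The paper's route is shorter and avoids the case split entirely. Given $g \in \QZ(G)$ and an arbitrary vertex $v$, choose a finite $S \ni v$ with $G_{(S)} \leq C_G(g)$ and pass to the fix-tree $\tilde S$ of $G_{(S)}$, which $g$ must stabilise. Tits' independence decomposes $G_{(\tilde S)} = G_{(S)}$ as a direct product of factors indexed by the boundary vertices of $\tilde S$ (those with a neighbour outside $\tilde S$), and each such factor is non-trivial precisely because $\tilde S$ is the \emph{full} fix-tree. Since $g$ centralises the product it normalises each factor; but conjugation by $g$ sends the factor at a boundary vertex $u$ to the factor at $g(u)$, and distinct factors have disjoint supports, so $g$ fixes every boundary vertex and hence all of $\tilde S \ni v$. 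No hyperbolic/elliptic dichotomy is needed, and the source of the required non-triviality (the definition of $\tilde S$) is made explicit.
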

\begin{proof}
	\begin{compactenum}[(i)]
	    \item
		Let $g \in \QZ(G)$, and let $v$ be an arbitrary vertex of $T$.
		Then by definition, there exists a finite subset $S \subset T$ such that $g$ centralises the pointwise stabiliser $G_{(S)}$,
		and we may assume that $S$ contains the vertex $v$ (by replacing $S$ by $S \cup \{ v \}$).
		Let $\tilde{S}$ be the fixtree of $G_{(S)}$; in particular, $G_{(S)} = G_{(\tilde{S})}$.
		Then $g$ stabilises $\tilde{S}$. By Tits' independence property the group $ G_{(\tilde{S})}$ splits
		as a direct product of many factors, and each factor naturally corresponds to a vertex of $\tilde S$
		contained in at least one edge not in $\tilde S$.
		Since $g$ centralises $G_{(\tilde{S})}$,
		it normalises each of these factors and, hence, it acts trivially on the vertices of $\tilde{S}$
		contained in at least one edge not in $\tilde S$; but then it acts trivially on all the vertices of $\tilde{S}$.
		In particular, $g$ fixes $v$, and since $v$ was arbitrary, $g$ is trivial, as desired.
		
	    \item
		Let $1 \neq N$ be a closed normal subgroup of $G$;
		we will show that $N$ contains $G^+$.
		If $N \cap G^+ = 1$, then $N$ would be a discrete normal subgroup of $G$.
		This is impossible, however, since $\QZ(G) = 1$ by (i).
		Hence $N \cap G^+ = G^+$ since $G^+$ is simple by Theorem~\ref{th:tits},
		and this proves our claim.
	    \qedhere
	\end{compactenum}
\end{proof}

We point out that $G^+$ need not be compactly generated, even if the group $G$ is.

\begin{example}
Let $T$ be the Cayley tree of the free group $F_2$ associated with a basis $\{a,b \}$. We view $T$ as a bi-coloured graph (one colour per generator). Let $G \leq \Aut(T)$ be the full automorphism group of this coloured graph. Then $G$ is closed; it is furthermore cocompact since it contains $F_2$.
Let $G^+$ denote the (open) subgroup of $G$ generated by the pointwise edge-stabilisers.
Then $G^+$ is simple but not compactly generated.
\end{example}
\begin{proof}
The fact that $G^+$ is simple follows from Theorem~\ref{th:tits}.

Let $V = F_2$ be the vertex-set of $T$.
Let $D_\infty = \langle a', b' \mid (a')^2 = (b')^2 = 1 \rangle$ denote the infinite dihedral group.
Let also $f \colon F_2 \to D_\infty$ be the canonical homomorphism defined by $f(a)= a'$ and $f(b)=b'$.
We now define a map
\[ \delta \colon V \times V \to D_\infty \colon (v, w) \mapsto f(w^{-1}v) \,. \]
Observe that $\delta(v,w)$ only depends on the colours of the edges of the unique path from $v$ to $w$;
in particular, $\delta$ is $G$-equivariant. The crucial observation is now the following: for each $g \in G$ which fixes pointwise some edge of $T$, we have $\delta(v, g.v) = 1$ for all $v \in V$. Since $G^+$ is generated by edge-stabilizers, it readily follows that $\delta(v, g.v) = 1$ for all $g \in G^+$ and $v \in V$.
Now pick a base vertex $v_0$ and consider an `infinite staircase' consisting of a bi-infinite geodesic line $(\dots, v_{-1}, v_0, v_1, \dots)$ such that
$\delta(v_0, v_{2n}) = (a' b')^n$ for every integer $n$.
It follows from the preceding discussion that no element of $G^+$ can map $v_0$ to $v_i$ for any $i \neq 0$;
in fact, this bi-infinite geodesic line is a fundamental domain for the $G^+$-action on $T$.
In particular, we conclude that the $G^+$-action is not cocompact.
Since $G^+$ acts minimally on $T$, Lemma~\ref{le:ccpt} implies that $G^+$ is not compactly generated;
on the other hand, $G$ does act cocompactly on $T$ and hence the same lemma implies that $G$ is compactly generated.
\end{proof}


\subsection{Open subgroups}

It turns out that for simple groups with Tits' independence property, we have some control over the open subgroups.
\begin{prop}\label{prop:Titsopen}
	Let $G \in \mS$ be a closed subgroup of $\Aut(T)$ satisfying Tits' independence property,
	and let $H$ be an open subgroup of $G$.
	If $T'$ is a minimal $H$-invariant subtree of $T$ (which always exists by Lemma~\textup{\ref{lem:basic}(vi)}),
	then $H$ contains the full edge-stabiliser $G_e$ for each edge $e \in E(T')$.
\end{prop}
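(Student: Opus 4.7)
The plan is to fix an edge $e = \{v,w\} \in E(T')$ and reduce the claim to proving $G_{(h_1)} \leq H$ and $G_{(h_2)} \leq H$, where $h_1$ and $h_2$ are the two rooted half-trees emanating from the endpoints of $e$. Since Tits' independence gives $G_{(e)} = G_{(h_1)} G_{(h_2)}$, the desired inclusion $G_{(e)} \leq H$ will then follow immediately.

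Because $e \in E(T')$, the subtree $T'$ has at least one edge, so $H$ cannot fix any vertex and in particular is non-compact. By Lemma~\ref{lem:basic}(iv), $H$ contains a hyperbolic element. I claim further that there exists a hyperbolic $h \in H$ whose translation axis contains the edge $e$. If $H$ fixes no end of $T$, this follows from the standard fact that the minimal invariant subtree $T'$ coincides with the union of the translation axes of the hyperbolic elements of $H$. If instead $H$ fixes some end, Lemma~\ref{lem:basic}(v) asserts that $H$ stabilises a geodesic line $\lambda$; a short argument then identifies $T' = \lambda$ and shows that every hyperbolic element of $H$ must translate along $\lambda$, so once again we produce a hyperbolic element with axis through~$e$.

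Once such an $h$ is in hand, the remainder is a translation argument. Since $H$ is open, it contains $G_{(F')}$ for some finite subtree $F' \subset T$, which we may enlarge so that $F'$ contains~$e$. For $n$ sufficiently large — depending only on the diameter of $F'$ and the translation length of $h$ — the translate $h^{-n} \cdot F'$ has been shifted far enough along the axis to sit entirely inside $h_1$. Since $h^{-n} \in H$, we then have
\[
    G_{(h_1)} \leq G_{(h^{-n} F')} = h^{-n} G_{(F')} h^{n} \leq H.
\]
The analogous argument with $h^{n}$ (for large positive $n$) places the translate inside $h_2$ and yields $G_{(h_2)} \leq H$, completing the proof.

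The main obstacle is the geometric input of Step~2, namely producing a hyperbolic element of $H$ whose axis contains the prescribed edge $e$; this forces a case distinction according to whether or not $H$ fixes an end of~$T$, and one must also check the degenerate possibilities (e.g.\ $T'$ reducing to a single vertex) in which the conclusion is vacuous. Once this is handled, the rest is essentially a single application of Tits' independence combined with the routine observation that conjugating a pointwise fixator by $h$ produces the pointwise fixator of the translated subset.
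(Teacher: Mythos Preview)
Your proof is correct and shares the same core mechanism as the paper's: translate the finite fixator $G_{(F')}$ by powers of a hyperbolic element so that it lands inside each half-tree determined by $e$, then invoke Tits' independence to obtain $G_{(e)} \leq H$. The one genuine difference is in how you reach an \emph{arbitrary} edge of $T'$. You invoke the standard fact that the minimal invariant subtree equals the union of the translation axes of the hyperbolic elements of $H$, so for each $e \in E(T')$ you produce a hyperbolic element whose axis passes through $e$ and run the translation argument directly. The paper instead fixes a \emph{single} hyperbolic element $\gamma$ with axis $L \subseteq T'$, proves $G_e \leq H$ only for $e \in E(L)$, and then propagates: it shows that if $G_{e'}, G_{e''} \leq H$ then $G_f \leq H$ for every edge $f$ on the geodesic between $e'$ and $e''$, and combines this with the fact that $T'$ is the convex hull of the $H$-orbit of any edge of $L$. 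Your route is shorter and more direct; the paper's is slightly more self-contained in that it does not appeal to the union-of-axes description of $T'$.
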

\begin{proof}
	If $H$ is compact, then it is contained in a vertex stabiliser $G_v$, and hence
	every minimal $H$-invariant subtree of $T$ is reduced to a single vertex;
	in this case, the statement is empty.

	So assume that $H$ is not compact.
	Then by Lemma~\ref{lem:basic}(iv), there is some hyperbolic element $\gamma \in H$;
	let $L$ be the axis of $\gamma$.
	Since $H$ is open, on the other hand, there is some finite subtree $S$ of $T$ such that the pointwise
	stabiliser $G_{(S)}$ of $S$ is contained in $H$.

	We claim that $L \subseteq T'$.
	Indeed, let $v \in V(T')$ be arbitrary, and let $z$ be the vertex of $L$ closest to $v$;
	then $\gamma$ maps $z$ to another vertex $z\gamma$ of $L$, and the path between $v$ and $v\gamma$
	contains the path between $z$ and $z\gamma$, which contains at least one edge $e$ of $L$.
	Since $v, v\gamma \in V(T')$, we conclude that $e \in E(T')$, and hence the $\langle \gamma \rangle$-orbit
	of $e$ is contained in $E(T')$; since the convex hull of this orbit is precisely $L$, we have $L \subseteq T'$ as claimed.

	Next, suppose that $e$ is an edge of $T$, and let $T_1$ and $T_2$ be two subtrees of~$T$
	not containing $e$, and lying at different sides of~$e$.
	We claim that if the pointwise stabilisers $G_{(T_1)}$ and $G_{(T_2)}$ are both contained in~$H$,
	then so is the edge stabiliser $G_e$.
	Indeed, let $T'_i$ be the unique half-tree rooted at one of the endpoints of $e$ containing $T_i$ for $i \in \{ 1,2 \}$.
	Then $G_{(T'_i)} \leq G_{(T_i)} \leq H$ for each $i$, and hence, by Tits' independence property,
	\[ G_e = \bigl\langle G_{(T'_1)} \cup G_{(T'_2)} \bigr\rangle \leq H \]
	as claimed.
	In particular, if $e'$ and $e''$ are two edges of $T$ such that
	$G_{e'}$ and $G_{e''}$ are contained in $H$, then for every edge $e$ lying
	on the geodesic from $e'$ to $e''$ we have $G_e \leq H$ as well.

	Now let $e \in E(L)$ be arbitrary.
	By applying appropriately high positive or negative powers of $\gamma$ to $S$, we get subtrees $S_1$ and $S_2$
	not containing $e$ and lying at different sides of $e$.
	Since $G_{(S_i)} \leq \langle G_{(S)}, \gamma \rangle \leq H$ for both $i=1,2$,
	we can apply the previous paragraph to conclude that $G_e \leq H$.

	Since $T'$ is a minimal $H$-invariant subtree of $T$ containing the edge $e$, the convex hull
	of the $H$-orbit of $e$ (which is clearly $H$-invariant) has to coincide with $T'$ itself.
	So let $f \in E(T')$ be arbitrary; then there exist elements $\alpha_1,\alpha_2 \in H$ such that
	$f$ lies on the geodesic from $e\alpha_1$ to $e\alpha_2$.
	Since $G_{e\alpha_i} = G_e^{\alpha_i} \leq H$, we conclude that $G_f \leq H$ as well.
\end{proof}

We say that a locally compact group has {\bf few open subgroups} if every proper open subgroup
is compact.
The large edge-transitive tree automorphism groups with few open subgroups can be characterised in terms of their local action.

\begin{theorem}\label{thm:LocPrim}
Let $T$ be a tree and $G \in \mS$ be a compactly generated non-compact closed subgroup of $\Aut(T)$ satisfying Tits' independence property. Then the following conditions are equivalent.
\begin{enumerate}[\rm (i)]
\item Every proper open subgroup of $G$ is compact.

\item $G$ splits as an amalgamated free product $G \cong A *_C B$, where $A$ and $B$ are maximal compact open subgroups and $C = A \cap B$ is a maximal subgroup of both $A$ and $B$. Moreover the Bass--Serre tree associated to this amalgam admits a $G$-equivariant embedding in $T$ which is isometric up to a scaling factor. 

\item $G$ splits as an amalgamated free product $G \cong A *_C B$, where $A$ and $B$ are maximal compact open subgroups,  $C = A \cap B$ and the $A$-action on $A/C$ (resp. the $B$-action on $B/C$) is primitive and non-cyclic. Moreover the Bass--Serre tree associated to this amalgam admits a $G$-equivariant embedding in $T$ which is isometric up to a scaling factor. 
\end{enumerate}
\end{theorem}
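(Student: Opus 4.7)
The plan is to work with a minimal cocompact action of $G$ on $T$ — available because $G$ is non-compact and in $\mS$, so by Lemma~\ref{lem:basic}(ii) it fixes no end, by Remark~\ref{re:invT} it admits a minimal $G$-invariant subtree (faithful by simplicity, cocompact by Lemma~\ref{le:ccpt}) — and then to close the loop (iii) $\Rightarrow$ (ii) $\Rightarrow$ (i) $\Rightarrow$ (iii). The implication (iii) $\Rightarrow$ (ii) is formal: a transitive action is primitive if and only if the point stabiliser is maximal, and the amalgam data and the Bass--Serre embedding are shared by both statements.

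For (ii) $\Rightarrow$ (i), let $H \leq G$ be open and non-compact; the goal is $H = G$. By Lemma~\ref{lem:basic}(iv), $H$ contains a hyperbolic element $\gamma$, whose axis lies inside every minimal $H$-invariant subtree of $T$; Proposition~\ref{prop:Titsopen} then yields $G_e \leq H$ for every edge $e$ of $T$ on this axis. Passing to the embedded Bass--Serre tree $T^{BS}$, each $T^{BS}$-edge stabiliser along the $T^{BS}$-axis of $\gamma$ is an intersection of such $G_e$'s and hence lies in $H$. At each axis vertex $v$, the two incident axis-edges carry two distinct conjugates of $C$ in $G_v$ — distinctness comes from non-cyclicity (equivalently, non-normality of $C$ in $A$ and $B$) via Lemma~\ref{lem:LocCyclic}. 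Maximality of $C$ in $G_v$ then forces the subgroup they generate to be all of $G_v$, so $H \supseteq G_v$ for every axis vertex; since two adjacent vertex stabilisers generate $G$ via the amalgam decomposition, $H = G$.

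The main step (i) $\Rightarrow$ (iii) runs by a Bass--Serre analysis. The minimal cocompact inversion-free action of $G$ on $T$ presents $G$ as the fundamental group $\pi_1(\mathcal{G}, \Gamma)$ of a finite graph of groups on $\Gamma = G\backslash T$, and I will first show (i) forces $\Gamma$ to be a single non-loop edge. Otherwise: (a) if $\Gamma$ contains a loop, then a proper sub-HNN $\langle G_v, t^n \rangle$ (with $n \geq 2$ when $\Gamma$ has only one loop, or dropping one of several stable letters otherwise) is open in $G$, proper, and non-compact; (b) if $\Gamma$ is a tree with at least two edges, peeling off a leaf yields a sub-fundamental-group that is open (contains a vertex group), proper of index $[G_v : G_e] > 1$, and non-compact (acts on an infinite sub-Bass--Serre tree with non-trivial vertex groups). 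Both contradict (i), so $G = A *_C B$ with $T$ identified with the Bass--Serre tree. Maximality of $C$ in $A$ then follows analogously: if $C \subsetneq H \subsetneq A$ with $H$ open in $A$, then $\langle H, B \rangle$ injects into $G$ as $H *_C B$, and is open (contains the open subgroup $B$), proper (the normal form theorem for amalgams gives $\langle H, B \rangle \cap A = H \neq A$), and non-compact (its own Bass--Serre tree is infinite because $H, B \supsetneq C$) — contradicting (i). The symmetric argument handles $B$, and non-cyclicity of the local actions comes from Lemma~\ref{lem:LocCyclic}.

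The main obstacle is the Bass--Serre case analysis in (i) $\Rightarrow$ (iii): ensuring that the candidate proper open subgroups produced in each bad configuration of $\Gamma$ are genuinely non-compact. The loop case in particular requires separate treatment, since naively ``removing a vertex'' produces only a compact subgroup there, forcing one to pass instead to a strictly smaller sub-HNN with larger translation length; verifying that these subgroups really embed in $G$ as open subgroups with the predicted indices uses Tits' independence property in conjunction with the standard subgroup theorems for amalgams and HNN extensions.
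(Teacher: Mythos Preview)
Your overall plan is sound, and your (ii) $\Rightarrow$ (i) is essentially the paper's (iii) $\Rightarrow$ (i) argument with an extra appeal to Lemma~\ref{lem:LocCyclic} to extract non-cyclicity; that works. The difference lies entirely in how you handle (i) $\Rightarrow$ (iii).

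There is a genuine gap in your case~(b). ``Peeling off a leaf'' of $\Gamma$ need not produce a non-compact subgroup when the graph of groups is not reduced. Concretely, take $\Gamma$ the segment $v_1 - v_2 - v_3$ with $G_{v_2} = G_{e_1} = G_{e_2}$ (this is compatible with minimality of $T$: it just means every $v_2$-vertex of $T$ has valence~$2$). Peeling off $v_3$ gives $\langle G_{v_1}, G_{v_2}\rangle = G_{v_1}$, compact; peeling off $v_1$ gives $G_{v_3}$, also compact. Your stated reason (``acts on an infinite sub-Bass--Serre tree'') is exactly what fails: the Bass--Serre tree of the sub-graph-of-groups is a finite star. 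The fix is easy --- first reduce the graph of groups by collapsing every edge whose inclusion into one endpoint is an isomorphism; after reduction all edge maps are proper, and then your leaf-peeling argument does yield a non-trivial amalgam. But you do need to say this. Your case~(a) is also overcomplicated: a loop in $\Gamma$ immediately gives a surjection $G \twoheadrightarrow \pi_1(\Gamma) \twoheadrightarrow \mathbf Z$, contradicting simplicity; no sub-HNN construction is needed (and the one you sketch is not obviously proper of finite index).

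The paper sidesteps the whole case analysis. Rather than proving $\Gamma$ \emph{is} a single edge, it simply collapses $\Gamma$ to one: pick any edge $e$ of $\Gamma$, let $A$ and $B$ be the fundamental groups of the two components of $\Gamma \setminus e$ (including the endpoints of $e$), and write $G = A *_C B$ with $C = G_e$; simplicity rules out an HNN factor. Now $A$ and $B$ are open (they contain vertex groups) and proper (the amalgam is non-trivial since $G$ is non-compact), so hypothesis~(i) forces them to be compact --- and you are done, with maximal compactness coming from Lemma~\ref{lem:basic}(vii) applied to the Bass--Serre tree $\tilde T$ of this amalgam. Local primitivity on $\tilde T$ is then obtained by exactly the argument you give for maximality of $C$. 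This is strictly less work: you never touch the shape of $\Gamma$, and the ``few open subgroups'' hypothesis does the compactness for free rather than having to be engineered subgroup by subgroup.
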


A tree is called \textbf{thick} if the valence of every vertex is at least three. The following immediate corollary is nothing but a reformulation of Theorem~\ref{thmi:LocPrim} from the introduction. 

\begin{cor}\label{cor:LocPrim}
Let $T$ be a thick tree and $G \in \mS$ be a compactly generated closed subgroup of $\Aut(T)$ acting minimally on $T$ and satisfying Tits' independence property. Then the following conditions are equivalent.
\begin{enumerate}[\rm (i)]
\item Every proper open subgroup of $G$ is compact.

\item For every vertex $v \in V(T)$, 	the induced action of $G_v$ on $E(v)$ is primitive; in particular $G$ is edge-transitive.

\item For every vertex $v \in V(T)$,
	the induced action of $G_v$ on $E(v)$ is primitive and non-cyclic; in particular $G$ is edge-transitive.

\end{enumerate}
\end{cor}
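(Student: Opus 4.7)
My strategy is to deduce the corollary from Theorem~\ref{thm:LocPrim}, using the extra hypotheses of thickness of $T$ and minimality of the $G$-action to identify the Bass--Serre tree of the amalgam decomposition in Theorem~\ref{thm:LocPrim}(ii)/(iii) with $T$ itself. Concretely, if $G \cong A *_C B$ as in Theorem~\ref{thm:LocPrim}(ii), with $G$-equivariant embedding $\varphi \colon T_{\mathrm{BS}} \hookrightarrow T$ isometric up to an integer scaling factor $k \geq 1$, then $\varphi(T_{\mathrm{BS}})$ is a non-empty $G$-invariant subtree of $T$ and hence equals $T$ by minimality. If $k \geq 2$, each interior vertex of a length-$k$ subdivision path would have valency exactly $2$ in $\varphi(T_{\mathrm{BS}}) = T$, contradicting thickness. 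Hence $k = 1$ and $\varphi$ is a $G$-equivariant tree isomorphism. Under this identification, $A$ and $B$ become vertex stabilisers $G_v$ and $G_w$ of two adjacent vertices $v, w$, the subgroup $C = G_v \cap G_w$ is the stabiliser of $\{v,w\}$, and $G_v/C \cong E(v)$, $G_w/C \cong E(w)$ as permutation sets.

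With this identification in hand, the implication (i) $\Rightarrow$ (iii) of the corollary follows from Theorem~\ref{thm:LocPrim}(i) $\Rightarrow$ (iii): the primitivity and non-cyclicity of the $A$-action on $A/C$ (resp.\ the $B$-action on $B/C$) translate directly to the same statements for the $G_v$-action on $E(v)$ (resp.\ the $G_w$-action on $E(w)$), and edge-transitivity of $G$, which is built into the amalgam structure, propagates this to every vertex of $T$ by conjugation. The implication (iii) $\Rightarrow$ (ii) is immediate.

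For (ii) $\Rightarrow$ (i), primitivity of $G_v$ on $E(v)$ implies transitivity, so any two edges sharing a vertex are $G$-equivalent; by connectedness of $T$ the group $G$ then acts edge-transitively. Lemma~\ref{lem:basic}(vii) identifies the vertex stabilisers as maximal compact open subgroups falling into exactly two conjugacy classes corresponding to the bipartition of $T$, and Bass--Serre theory applied to the edge-transitive action of $G$ on $T$ (which is without inversion, since $G \in \mS$) gives $G \cong G_v *_{G_e} G_w$ with $G_e = G_v \cap G_w$ for any edge $e = \{v,w\}$. Primitivity of $G_v$ on $E(v) \cong G_v/G_e$ is equivalent to $G_e$ being maximal in $G_v$, and similarly in $G_w$. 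This is precisely Theorem~\ref{thm:LocPrim}(ii), which then yields (i).

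The main obstacle is the first step: one must ensure that thickness genuinely rules out scaling factors $k \geq 2$, which hinges on pinning down the image of $\varphi$ (all of $T$, via minimality) and distinguishing vertices of $T$ coming from $T_{\mathrm{BS}}$ from those arising as interiors of subdivided edges. Once the Bass--Serre tree has been identified $G$-equivariantly with $T$, the rest is the standard dictionary between primitive transitive actions and maximal subgroups, combined with two direct invocations of Theorem~\ref{thm:LocPrim}.
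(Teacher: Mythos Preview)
Your proposal is correct and follows exactly the approach the paper intends: the paper does not give a separate proof of the corollary, merely calling it an ``immediate'' reformulation of Theorem~\ref{thm:LocPrim}, and your argument spells out precisely the missing details --- using minimality to identify the image of the Bass--Serre tree with $T$, and thickness to force the scaling factor to be $1$, then translating the amalgam conditions into the local primitivity statements via the standard dictionary. The direction (ii) $\Rightarrow$ (i), where you rebuild the amalgam from edge-transitivity and invoke Lemma~\ref{lem:basic}(vii), is likewise the natural unwinding of Theorem~\ref{thm:LocPrim}(ii).
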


We shall need a subsidiary observation, which clarifies the equivalence between (ii) and (iii) in Theorem~\ref{thm:LocPrim} and Corollary~\ref{cor:LocPrim}.

\begin{lemma}\label{lem:LocCyclic}
Let $G\leq \Aut(T)$  be a closed subgroup  satisfying Tits' independence property, acting edge-transitively on $T$.  If $G \in \mS$, then there is no vertex $v \in V(T)$ such that the $G_v$-action on $E(v)$ is free.
\end{lemma}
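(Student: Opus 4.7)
My plan is to derive a contradiction by showing that a free $G_v$-action on $E(v)$ would force $G$ to be discrete.

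First, I would note that, since $G \in \mS$ is topologically simple, it admits no continuous surjection onto the group of order two, and therefore $G$ acts on $T$ without inversion. (Any subgroup of $\Aut(T)$ acting with inversion has an index-two subgroup acting without inversion; simplicity rules this out.) Consequently, for every edge $e \in E(v)$ the stabiliser $G_e$ of $e$ equals the pointwise stabiliser $G_{(e)}$, and coincides with the stabiliser of the point $e \in E(v)$ under the $G_v$-action.

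Next I would exploit the freeness hypothesis: it yields $G_{(e)} = 1$ for every $e \in E(v)$, and in particular the kernel of the $G_v$-action on the non-empty set $E(v)$ is trivial. Hence $G_v$ embeds into $\Sym(E(v))$. Since $T$ is locally finite (as assumed throughout Section~\ref{sec:prel}), $E(v)$ is finite, and so $G_v$ is finite. But $G_v$ is also open in $G$, being the pointwise stabiliser of the singleton $\{v\}$ in the permutation topology; a topological group possessing a finite open subgroup is discrete. This contradicts the hypothesis that $G \in \mS$ is non-discrete.

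I expect no real obstacle in this proof: once the absence of inversion is secured from simplicity, the rest is simply the standard faithful-finite-action argument combined with the openness of vertex stabilisers. The hypotheses of Tits' independence and of edge-transitivity are part of the ambient setting of the section but do not seem to enter the argument; they could, however, be used to strengthen the intermediate conclusion $G_{(e)} = 1$ to the triviality of every edge stabiliser and of every half-tree stabiliser, giving the same contradiction via the Bass--Serre decomposition $G = G_v \ast_{G_e} G_w$ becoming an honest free product.
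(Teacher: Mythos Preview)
Your argument is correct and considerably more direct than the paper's. The core observation is that a free action of $G_v$ on the finite set $E(v)$ forces $|G_v|\le |E(v)|<\infty$ (or, as you phrase it, $G_v\hookrightarrow\Sym(E(v))$), and since $G_v$ is open this makes $G$ discrete, contradicting $G\in\mS$. Note that the detour through ``no inversion'' and $G_{(e)}=1$ is not actually needed: freeness already gives faithfulness, and the orbit--stabiliser bound does the rest.

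By contrast, the paper argues via Bass--Serre theory and Proposition~\ref{prop:Titsopen}: it builds an open subgroup $G_0$ generated by the stabilisers of the neighbours of $v_0$, shows that $E(v_0)$ is a strict fundamental domain for $G_0$ on $E(T)$, then uses Proposition~\ref{prop:Titsopen} (which genuinely requires Tits' independence) together with simplicity to conclude $G=G_0$; comparing orbit counts forces $|E(v_0)|=1$, whence $T$ is a finite star and $G$ is finite. Your route bypasses Tits' independence and edge-transitivity entirely and extracts the contradiction at the single vertex~$v$, which is cleaner. The paper's approach does yield the slightly stronger intermediate statement $|E(v_0)|=1$ rather than merely ``$G_v$ is finite'', but for the purpose of the lemma this extra precision is not needed, and both arguments ultimately rely on the standing local-finiteness assumption from Section~\ref{sec:prel} to reach discreteness.
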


\begin{proof}
%
Since $G$ is edge-transitive, it follows that $G_v$ is transitive on $E(v)$ for all $v \in V(T)$. Let now $v_0 \in V(T)$ be such that  the $G_{v_0}$-action on $E(v_0)$ is free. Let $v_0^\perp$ denote the subset of $V(T)$ consisting of all vertices adjacent to, but distinct from, $v_0$. Let $G_0$ denote the subgroup of $G$ generated by $\{G_w \; | \; w \in v_0^\perp\}$. Then $G_0$ is an open subgroup of $G$ and moreover we have $E(v_0) \cdot G_0 = E(T)$. Notice moreover that, in view of the assumption that $G_{v_0}$ acts freely on $E(v_0)$, it follows from Bass--Serre theory that $E(v_0)$ is a strict fundamental domain for the $G_0$-action on $E(T)$ and that $G_0$ is the free product of the groups $\{G_w \; | \; w \in v_0^\perp\}$ amalgamated over their intersection $\bigcap_{ w \in v_0^\perp} G_w $. 

Since $G_0$ acts cocompactly on $T$, we infer that $T$ is a minimal invariant subtree for $G_0$. In view of Proposition~\ref{prop:Titsopen}, it follows that $G_e$ is contained in $G_0$ for every edge $e \in E(T)$. Since $G$ is simple, it is generated by its edge-stabilisers, and we infer that $G = G_0$. Since $G$ acts edge-transitively while $G_0$ has exactly $| E(v_0)|$ orbits on $E(T)$, we infer that the star $E(v_0)$ is in fact reduced to a single edge of $T$. Therefore $v_0$ is an endpoint of $T$. In particular $T$ is bounded and hence, $G$ is finite. This is absurd since the class $\mS$ does not contain any discrete group.
\end{proof}

\begin{proof}[Proof of Theorem~\ref{thm:LocPrim}]
Observe that by Proposition~\ref{prop:Titsopen}, each open subgroup $H$ of $G$ has a minimal invariant subtree, and such a subtree reduces to a single vertex if and only if $H$ is compact. There is no loss of generality in assuming that $G$ acts minimally on $T$. Since $G$ is compactly generated, we deduce from Lemma~\ref{le:ccpt} that the action of $G$ on $T$ is cocompact. 

\begin{compactitem}
\item[(i) $\Rightarrow$ (ii)]

Bass--Serre theory provides us with a graph of group decomposition of $G$ over a finite graph $\Gamma$. The corresponding vertex- and edge-groups are nothing but vertex- and edge-stabilisers for the $G$-action on $T$. In particular they are compact open subgroups of $G$. 
Collapsing the aforementioned graph of group decomposition to a single edge, we obtain a presentation of $G$ as a non-trivial amalgamated free product or as an HNN-extension. Since $G$ is simple, it does not map onto $\mathbf Z$ and can therefore not be an HNN-extension. Thus we have a decomposition $G = A *_C B$ with $C= A \cap B$. Since $A$ and $B$ are generated by vertex-groups of the initial decomposition, they are open subgroups of $G$. They must therefore be compact since $G$ has few open subgroups. 

Let $\tilde T$ be the Bass--Serre tree associated with the decomposition  $G = A *_C B$, and let $\tilde x, \tilde y \in V(\tilde T)$ denote the vertices respectively fixed by $A$ and $B$. Applying Lemma~\ref{lem:basic} to $\tilde T$, we deduce that $A$ and $B$ are maximal compact subgroups. In particular $A$ and $B$ are vertex stabilisers for the $G$-action on $T$. 

\medskip    
We now show that the $G$-action on $\tilde T$ is locally primitive. To this end, suppose that there is a vertex $v \in V(\tilde T)$ for which the local action of $G_v$ on $E(v)$ is not primitive. Fix an edge $e = \{ v, w \} \in E(\tilde T)$. Since $G$ is edge-transitive, it follows that the $G_v$-action on $E(v)$ is transitive. However it is not primitive by assumption; we deduce that there is a subgroup $H_1 \leq G_v$ containing $G_e$ properly but which does not act transitively on $E(v)$. In particular $H_1 \neq G_v$.

Set $H_2 = G_w$ and consider the open subgroup $H = \langle H_1, H_2 \rangle$. In fact, we have $H = H_1 *_{G_e} H_2$ and the set $S = e \cdot H$ is a minimal $H$-invariant subtree of $\tilde T$, which is a Bass--Serre tree corresponding to the above decomposition of $H$. Since  $H_1$ is not transitive on $E(v)$, we infer that $S$ is a proper subtree of $\tilde T$. In particular $H$ is properly contained in $G$. By construction the tree $S$ is unbounded, so that $H$ is non-compact. Thus we have shown that $G$ contains a proper non-compact open subgroup, which is absurd. This confirms that $G$ acts locally primitively on $\tilde T$ as claimed.

\medskip
It only remains to exhibit a $G$-equivariant embedding $\tilde T \to T$. To this end, we pick vertices $x, y$ of $T$ with $A = G_x$ and $B = G_y$ and such that  $d(x, y)$ is minimal with respect to this property. Since $[x, y] \cdot G$ is a connected and $G$-invariant subset of $T$, it coincides with $T$ by minimality of the $G$-action. Since the $G$-action on $\tilde T$ is locally primitive, it easily follows that $A, B$ and $C$ are the only compact subgroups of $G$ which contain $C$ as a subgroup. Now for every element $h \in G$ such that $zh \in [x, y]$ for some $z \in ]x, y[$, the group $C = G_x \cap G_y$ is contained in the compact subgroup $G_z \cap h\inv G_z h$. This implies that $C = G_z$ and, in particular, that $z$ fixes $[x, y]$ pointwise. In other words, this shows that $[x, y] $ is a strict fundamental domain for the $G$-action on $T$. On the other hand it is clear that the edge $[\tilde x, \tilde y]$ is a strict fundamental domain for the $G$-action on $T$. Therefore, the assignments $\tilde x \mapsto x$ and $\tilde y \mapsto y$ extend to a $G$-equivariant map $\tilde  T \to T$ which is isometric up to multiplying the metric on $\tilde T$ by a factor equal to $d(x, y)$, the presence of the latter factor accounting for the possibility that the segment $[x, y]$ be divided into several edges by some vertices of valence two.

\item[(ii) $\Rightarrow$ (iii)]
	Immediate from Lemma~\ref{lem:LocCyclic}.

    \item[(iii) $\Rightarrow$ (i)]
We may and shall assume without loss of generality that $T$ coincides with the Bass--Serre tree of the given amalgam decomposition of $G$. 
Assume that for every vertex $v \in V(T)$, the induced action of $G_v$ on $E(v)$ is primitive and non-cyclic. For every pair of edges $e_1, e_2$ containing $v$, we have that both $G_{e_1}$ and $G_{e_2}$ are two different maximal subgroups of $G_v$, which are non-trivial since the $G_v$-action on $E(v)$ is not cyclic. We deduce that 	$G_v = \langle G_{e_1}, G_{e_2} \rangle$.
		
	Now let $H$ be an arbitrary open subgroup of $G$, and suppose that $H$ is not compact. Then there is a minimal invariant subtree $S$ for $H$, which has no endpoints.
	By Proposition~\ref{prop:Titsopen}(ii), $H$ contains every edge stabiliser $G_e$ with $e \in E(S)$.
	Since $G$ is generated by the edge stabilisers and $H \neq G$, the equality $H=G$ will follow provided we show that $S=T$. 
		
	Since $S$ has no endpoint, it follows that for every vertex $v \in V(S)$, the star $E(v)$ contains at least two edges $e_1, e_2$ in $S$. By the previous paragraph, we have $G_v = \langle G_{e_1}, G_{e_2} \rangle \leq H$. Since $G_v$ acts transitively on $E(v)$, we deduce that for every $v \in V(S)$ we have $S_v = E(v)$. Clearly this implies that $S = T$, as desired.
\qedhere
\end{compactitem}
\end{proof}

\section{Groups with a prescribed local action}

\subsection{Burger-Mozes' universal group $U(F)$}\label{sec:BM}

We will now focus on a family of examples constructed by M.~Burger and Sh.~Mozes~\cite{Burger-Mozes1}.

Let $d>2$ be a positive integer, let $F \leq \Sym(d)$ be a permutation group on the set $\mathbf d = \{1, \dots, d\}$ and let $T$ be the regular tree of degree $d$. Pick a colouring $i \colon E(T) \to \mathbf d$ of the edge-set of $T$ by the elements of $\mathbf d$ such that its restriction to the star $E(v)$ around every vertex $v$ is a bijection. It is clear that this colouring is unique up to an automorphism of $T$. Let now $U(F)$ be the automorphism group defined by
\[ U(F) = \{g \in \Aut(T) \mid  i \circ g  \circ (i |_{E(v)})^{-1} \in F \text{ for all  } v \in V(T) \} \,. \]
Let $U(F)^+$ denote the subgroup generated by the pointwise edge-stabilisers. One shows  that $U(F)^+$ is edge-transitive if and only if $F$ is transitive and generated by its point stabilisers. In that case $U(F)^+$ has index two in $U(F)$ and it follows from Theorem~\ref{th:tits} that $U(F)^+$ is simple. Furthermore, this assumption also ensures that the group $G = U(F)^+$ acts locally as~$F$; in other words for each vertex $v \in V(T)$ the $G_v$-action on the star $E(v)$ is isomorphic to the $F$-action on $\mathbf d$. It is shown in~\cite[\S3.2]{Burger-Mozes1} that every vertex-transitive subgroup of $\Aut(T)$ whose vertex stabilisers act locally like $F$ (on the star of the fixed vertex) is conjugate to a subgroup of $U(F)$.

\begin{quote}\em
    We will assume from now on that $F$ is transitive and generated by its point stabilisers.
\end{quote}

\subsection{Open subgroups of $U(F)^+$}

We retain the notation and assumptions of Section~\ref{sec:BM}. Let us moreover denote by $G$ the simple group $U(F)^+$.

\begin{prop}\label{pr:openU(F)}
We have the following.
\begin{enumerate}[\rm (i)]
    \item
	The group $F$ is primitive if and only if every proper open subgroup of $G$ is compact.
    \item
	Suppose that $F$ is imprimitive, with maximal blocks of imprimitivity of cardinality~$k$,
	and assume moreover that $F$ acts regularly on each such block.
	Then for each edge $e \in E(T)$, the quotient $\norma_G(G_e)/G_e$ is virtually free.
	Moreover, if $k \geq 3$, then $G$ possesses open subgroups which are not compactly generated.
\end{enumerate}
\end{prop}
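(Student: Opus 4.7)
\medskip\noindent\textbf{Proof plan.} For part~(i), we apply Corollary~\ref{cor:LocPrim} to $G = U(F)^+$: this group is closed in $\Aut(T)$, belongs to $\mS$, acts edge-transitively (hence cocompactly, so compactly generated by Lemma~\ref{le:ccpt}), and satisfies Tits' independence property by construction. The local action $G_v \curvearrowright E(v)$ is permutation isomorphic to $F \curvearrowright \mathbf d$. Under the standing hypothesis that $F$ is transitive and generated by its point stabilisers, $F$ cannot be cyclic (a transitive cyclic permutation group is regular, and thus has trivial point stabilisers). Hence Corollary~\ref{cor:LocPrim} yields the equivalence ``every proper open subgroup of $G$ is compact'' $\Longleftrightarrow$ ``$F$ is primitive''.

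For part~(ii), the central observation is that $\Fix(G_e)$ is a full $k$-regular subtree of $T$. Let $B \subset \mathbf d$ be the block containing the colour of $e$ and let $T_B$ be the connected subgraph of $T$ through $e$ formed by all edges whose colour lies in $B$. Since every vertex of $T_B$ has exactly $|B|=k$ incident edges in $T_B$, the subgraph $T_B$ is a $k$-regular subtree. Because the setwise stabiliser $F_B$ acts regularly on $B$, every point stabiliser $F_c$ with $c \in B$ lies in $F_B$ and acts trivially on $B$. Propagating this observation from the endpoints of $e$ outwards along $T_B$ shows that $G_e$ fixes $T_B$ pointwise; the reverse inclusion is obvious, so $G_e = G_{(T_B)}$, and consequently $\norma_G(G_e)$ coincides with the setwise stabiliser of $T_B$ in $G$.

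Set $V := \norma_G(G_e)/G_e$, which acts faithfully on $T_B$. For $v \in V(T_B)$, the action of $G_v \cap \norma_G(G_e)$ on the $k$ edges of $T_B$ incident to $v$ is regular with kernel $G_e$ (by the same propagation argument), so each vertex stabiliser in $V$ has order $k$. An element of $V$ fixing two adjacent vertices of $T_B$ lies in $G_{e'}$ for the connecting edge $e' \in E(T_B)$, and the fix set analysis applied to $e'$ yields $G_{e'} = G_{(T_B)} = G_e$; hence edge stabilisers in $V$ are trivial. Moreover, for any $e' \in E(T_B)$ the edge-transitivity of $G$ on $T$ provides some $g \in G$ with $g(e) = e'$, and then $g G_e g^{-1} = G_{e'} = G_e$, so $g \in \norma_G(G_e)$; in particular $V$ acts edge-transitively on $T_B$. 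Bass--Serre theory now identifies $V$ with either a free product $H_1 * H_2$ of two order-$k$ groups (when $V$ has two vertex orbits on $T_B$) or with $H * \mathbb Z$ where $|H|=k$ (when $V$ is vertex-transitive); in particular $V$ is virtually free.

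When $k \geq 3$, each of the two possible forms for $V$ contains a non-abelian free subgroup, as follows from a ping-pong argument applied to two hyperbolic elements of $V$ with disjoint axes in $T_B$. Since the free group $F_2$ admits non-finitely-generated subgroups (for instance its commutator subgroup), so does $V$. Pulling back any such subgroup under $\norma_G(G_e) \to V$ yields an open subgroup $H \leq G$ containing $G_e$ as a compact open normal subgroup with $H/G_e$ not finitely generated; if $H$ were compactly generated, then the discrete quotient $H/G_e$ would be finitely generated, a contradiction. The main obstacle in the whole argument is the identification $\Fix(G_e) = T_B$; once this is established, the structure of $V$ and the subsequent constructions follow from standard Bass--Serre theory.
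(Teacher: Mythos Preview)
Part~(i) is correct and matches the paper (the paper cites Theorem~\ref{thm:LocPrim}, you use its corollary; same content).

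Part~(ii) has a genuine gap at the word ``consequently''. From $G_e = G_{(T_B)}$ you correctly get $G_{\{T_B\}} \leq \norma_G(G_e)$, but the reverse inclusion does \emph{not} follow from this: any $g\in\norma_G(G_e)$ stabilises $\Fix(G_e)$, and you need $\Fix(G_e)=T_B$ to conclude $g(T_B)=T_B$. You have only shown $T_B\subseteq\Fix(G_e)$ (the ``reverse inclusion'' you call obvious is $G_{(T_B)}\leq G_e$, an inclusion of groups, not the needed inclusion of fix sets). The missing inclusion $\Fix(G_e)\subseteq T_B$ is exactly where the \emph{maximality} of the block $B$ enters, and you never invoke that hypothesis. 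The paper supplies it by noting that a point stabiliser $F_a$ with $a\in B$ fixes no colour outside $B$; one way to see this is that for transitive $F$ the set $\Fix(F_a)=N_F(F_a)\cdot a$ is itself a block, it contains $B$ by your regularity argument, so by maximality it equals $B$ or all of $\mathbf d$, and the latter would force $F_a=1$, contradicting the standing hypothesis that $F$ is generated by its point stabilisers. Once $\Fix(F_a)=B$ is known, at every $v\in T_B$ the local image of $G_e$ (namely $F_{(B)}=F_a$) moves every edge of colour outside $B$, giving $\Fix(G_e)\subseteq T_B$.

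With that gap filled, your argument and the paper's coincide; your explicit computation that vertex stabilisers in $V$ have order $k$ and edge stabilisers are trivial is a bit more detailed than the paper, which simply notes that $V$ acts properly and edge-transitively on the $k$-regular tree $T_B$ and invokes Bass--Serre theory.
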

\begin{proof}
\begin{compactenum}[\rm (i)]
    \item
	This follows immediately from Theorem~\ref{thm:LocPrim}.
    \item
	We shall use the following definition.    For each subset of colours $\mathbf{b} \subseteq \mathbf{d}$, we define a {\bf $\mathbf{b}$-tree}
    to be a subtree $S \subseteq T$ which only uses colours from $\mathbf{b}$, and which is maximal
    with respect to this property, \emph{i.e.}\@ for each vertex $v \in V(S)$ we have $i(S \cap E(v)) = \mathbf{b}$.
   
Let now $e$ be an arbitrary edge of $T$, and let $\mathbf{b} \subseteq \mathbf{d}$ the unique block of imprimitivity
	containing the colour of $e$.
	Denote the $\mathbf{b}$-tree containing $e$ by $S$;
	then $S$ is a regular tree of degree $|\mathbf{b}| = k$.
	By the regularity condition imposed on $F$, every element of $G_e$ fixes the tree $S$ elementwise,
	and in fact $G_e = G_f$ for every $f \in E(S)$.
	On the other hand, the maximality of the blocks implies that a point stabiliser $F_a$ does not fix any point
	outside the block containing $a$, and this implies that $G_e \neq G_f$ for every $f \not\in E(S)$.
	We conclude that $\norma_G(G_e)$ is equal to the global stabiliser $G_S$ of the subtree $S$, and clearly
	$\norma_G(G_e)$ acts edge-transitively on $S$.
	Hence the discrete group $\norma_G(G_e) / G_e$ acts transitively and properly (\emph{i.e.}\@ with finite vertex stabilisers) on $S$, and
	Bass--Serre theory implies that $\norma_G(G_e) / G_e$ is virtually free.
	Moreover, it is virtually abelian free if and only if the tree $S$ is a line or a point, \emph{i.e.}\@ if $k < 3$. In particular, if $k\geq 3$,  then $\norma_G(G_e) / G_e$ contains subgroups that are not finitely generated.
	Lifting such a group back to a subgroup of $\norma_G(G_e)$ provides us with an open subgroup of $G$ which is
	not compactly generated.
    \qedhere
\end{compactenum}
\end{proof}
\begin{remark}
A topological group is called \textbf{Noetherian} if it satisfies the ascending chain condition on open subgroups. A locally compact group is Noetherian if and only if every open subgroup is compactly generated. In particular, it follows from Proposition~\ref{pr:openU(F)} that compactly generated elements of $\mS$ need not be Noetherian. 
\end{remark}

\subsection{Local structure}\label{sec:LocalStr}

By the {\bf local structure} of $G$, we mean the properties shared by all compact open subgroups of $G$; such properties are invariant up to commensurability.

The structure of vertex-stabilisers in $G$ may be described in terms of the finite group $F$.
More precisely, the vertex-stabilisers have the structure of an infinitely iterated wreath product of finite groups:
\begin{prop}[{\cite[Section~3.2]{Burger-Mozes1}}]\label{pr:wr}
    Let $F \leq \Sym(d)$.
    The maximal compact subgroup $U(F)^+_v$ of $U(F)^+$
    is obtained as the projective limit $\varprojlim A_n$ of the
    groups
    \[ \begin{cases}
        A_0 = F \,; \\
        A_n = F_a \wr A_{n-1} & \text{for } n \geq 1,
    \end{cases} \]
    where $F_a  \leq  F$ denotes a point-stabiliser. Similarly, for an edge stabiliser $U(F)^+_e$ we have
    $U(F)^+_e \cong \varprojlim D_n \times \varprojlim D_n \cong \varprojlim (D_n \times D_n)$ with
    \[ \begin{cases}
        D_1 = F_a \,; \\
        D_n = F_a \wr D_{n-1} = D_{n-1} \wr F_a & \text{for } n \geq 2 \,.
    \end{cases} \]
\end{prop}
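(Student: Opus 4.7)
The plan is to realise $U(F)^+_v$ (and each direct factor of $U(F)^+_e$) as an inverse limit of finite quotients coming from the action on successive balls, and to identify the finite quotients as iterated permutational wreath products via Tits' independence.

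For the vertex stabiliser I would fix $v \in V(T)$ and let $K_n \leq U(F)^+_v$ denote the pointwise stabiliser of the ball $B_n(v)$. Since $U(F)^+_v$ is profinite with $\bigcap_n K_n = 1$, one has $U(F)^+_v \cong \varprojlim_n A_n$ where $A_n := U(F)^+_v / K_{n+1}$. The base case $A_0 = F$ expresses exactly the defining property that $U(F)^+$ acts locally as $F$ on the star $E(v)$. For the inductive step I would analyse the short exact sequence
\[
1 \longrightarrow K_n/K_{n+1} \longrightarrow A_n \longrightarrow A_{n-1} \longrightarrow 1.
\]
Every element of $K_n$ fixes each sphere-$n$ vertex $u$ together with its parent edge, so its local action on the remaining $d-1$ edges at $u$ lies in the stabiliser in $F$ of the colour of the parent edge, a subgroup conjugate to $F_a$. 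Iterating the edge form of Tits' independence quoted in Section~\ref{ss:Tits} along the edges joining $S_n(v)$ to $S_{n+1}(v)$ then shows that these local actions at distinct vertices of $S_n(v)$ are independent, giving $K_n/K_{n+1} \cong F_a^{S_n(v)}$. The induced conjugation action of $A_{n-1}$ on this base is the natural permutation of $S_n(v)$, and the sequence splits by lifting any $F$-automorphism of $B_n(v)$ to $B_{n+1}(v)$; together these yield the wreath product identification $A_n \cong F_a \wr A_{n-1}$.

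For the edge stabiliser, with $e = \{v,w\}$, the edge form of Tits' independence quoted in the paper gives $U(F)^+_e = U(F)^+_{(h_1)} \cdot U(F)^+_{(h_2)}$, where $h_1, h_2$ are the two half-trees emanating from the endpoints of $e$. Because $U(F)^+_{(h_1)}$ and $U(F)^+_{(h_2)}$ pointwise fix disjoint subtrees, they commute, so the product is direct. Each factor is the group of $F$-admissible automorphisms of a rooted tree in which \emph{every} vertex, including the root, has exactly $d-1$ children: the root because the colour of $e$ is absent there, and each deeper vertex because one of its $d$ edges always connects to the parent. The same recursion as for $A_n$ applies, now with $F_a$ (rather than $F$) acting at the top level, and identifies this factor with $\varprojlim D_n$. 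The two expressions for $D_n$ agree by associativity of iterated permutational wreath products, and the final isomorphism $\varprojlim D_n \times \varprojlim D_n \cong \varprojlim (D_n \times D_n)$ is a standard property of inverse limits of finite groups.

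The main technical point will be the subtree form of Tits' independence underlying the isomorphism $K_n/K_{n+1} \cong F_a^{S_n(v)}$: this does not follow directly from the atomic edge version, but is obtained by iterating that version across all the edges joining $S_n(v)$ to $S_{n+1}(v)$ and then restricting to the finite quotient acting on $B_{n+1}(v)$. Once this bookkeeping is carried out, the entire proposition reduces to a routine verification.
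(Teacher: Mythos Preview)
The paper does not include a proof of this proposition; it is simply quoted from \cite[Section~3.2]{Burger-Mozes1}. Your argument is a correct and standard reconstruction of why the statement holds, and is essentially what one finds in Burger--Mozes.

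A couple of minor remarks. First, for $U(F)$ itself the independence of the local actions at distinct sphere-$n$ vertices is immediate from the very definition of $U(F)$ (an automorphism lies in $U(F)$ if and only if every local permutation lies in $F$, with no further constraint), so you do not really need to iterate the edge form of Tits' independence to get $K_n/K_{n+1} \cong F_a^{S_n(v)}$ --- it drops out of the definition. Second, since the proposition concerns $U(F)^+$ rather than $U(F)$, you should observe (and it is worth saying explicitly) that under the standing assumption of Section~\ref{sec:BM} one has $U(F)^+_v = U(F)_v$: the index-$2$ subgroup $U(F)^+$ is precisely the type-preserving part of $U(F)$, so vertex stabilisers coincide. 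This is why your computation, which is really about $U(F)_v$, gives the stated result for $U(F)^+_v$. The same remark applies to the edge stabiliser, where $U(F)_{(h_i)} \leq U(F)^+$ because pointwise half-tree stabilisers fix the edge $e$ and hence lie in an edge stabiliser.
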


We warn the reader that all wreath products are considered with their imprimitive wreath product action,
and that the point stabilisers $F_a$ are considered with their action on $(d-1)$ elements
(even if this action has fixed points). We shall come back to the profinite group $D = \varprojlim D_n$ in Section~\ref{sec:branch} below.

We will now investigate the extent to which the group $F$ is determined by the local or global structure of $G$. 

\begin{theorem}\label{th:global}
\begin{compactenum}[\rm (i)]
    \item
    Let $F, F'  \leq  \Sym(d)$ be two groups satisfying the conditions of Section~\textup{\ref{sec:BM}},
    and assume that $U(F)^+ \cong U(F')^+$.
    Then $F \cong F'$ as permutation groups, \emph{i.e.}\@ $F$ and $F'$ induce equivalent permutation representations on $\mathbf{d}$.
    \item
    Let $G' \in \mathcal{S}$ be a group acting continuously, properly and edge-transitively on some tree $T$,
    and assume that $G' \cong G = U(F)^+$ for some group $F  \leq  \Sym(d)$ satisfying the conditions of Section~\textup{\ref{sec:BM}}.
    Then $G$ and $G'$ have equivalent actions on the tree $T$ of degree $d = |\mathbf{d}|$.
\end{compactenum}
\end{theorem}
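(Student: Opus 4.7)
My plan is to prove (ii) first, since (i) then follows easily by comparing local actions. By Lemma~\ref{lem:basic}(viii), the tree $T$ has degree $d$, so I identify $T$ with the regular $d$-tree $T_d$ on which $G = U(F)^+$ already acts via its standard action. Transporting along $G' \cong G$ gives two continuous, proper, edge-transitive actions $\rho, \rho' : G \to \Aut(T_d)$, and the goal is to construct a $G$-equivariant tree isomorphism intertwining them. By Lemma~\ref{lem:basic}(vii), the maximal compact subgroups of $G$ form two conjugacy classes $\mathcal U_1, \mathcal U_2$ corresponding to the two bipartition classes in each action; these classes are invariants of $G$. Every $U \in \mathcal U_1 \cup \mathcal U_2$ fixes a unique vertex in each action, because the local action at any vertex is transitive by edge-transitivity and therefore fixes no further vertex.

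The heart of the argument is an intrinsic characterisation of edge-stabilisers at a vertex. For any $B \in \mathcal U_2$, writing $v(B)$ for its unique $\rho$-fixed vertex, the first-vertex-on-geodesic assignment yields a $U_v$-equivariant surjection from the $U_v$-orbit of $v(B)$ onto the $d$ $\rho$-neighbours of $v$; hence $[U_v : U_v \cap B] \ge d$, with equality if and only if $U_v \cap B$ already coincides with the $U_v$-stabiliser of the corresponding neighbour, i.e.\ with a $\rho$-edge-stabiliser at $v$. The identical reasoning applies in $\rho'$. Consequently, the set
\[ \mathcal H_v = \{ U_v \cap B : B \in \mathcal U_2,\ [U_v : U_v \cap B] = d \} \]
is simultaneously the set of $\rho$-edge-stabilisers at $v$ and the set of $\rho'$-edge-stabilisers at $v'$, where $v'$ denotes the unique $\rho'$-fixed vertex of $U_v$. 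In particular, for a $\rho$-neighbour $w$ of $v$, the subgroup $H = U_v \cap U_w \in \mathcal H_v$ is the $\rho'$-stabiliser of some edge $e'' = \{v', w''\}$ at $v'$.

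The main obstacle, which I expect to be the hardest step, is to identify $w''$ with the unique $\rho'$-fixed vertex $w'$ of $U_w$. I would attack this via Tits' independence. In $\rho$, $H$ decomposes as a direct product $H = G_{(h_v)}^\rho \times G_{(h_w)}^\rho$ of pointwise stabilisers of the two half-trees incident to $e$; by Proposition~\ref{pr:wr} each factor is topologically isomorphic to the indecomposable iterated wreath product $D = \varprojlim D_n$, so a Krull--Schmidt-type uniqueness pins down these two subgroups of $H$ independently of any action. On the other hand, the intrinsic description of $\mathcal H_v$ above shows that the local action of $U_v$ at $v'$ (its action on the cosets of any element of $\mathcal H_v$) agrees with its local action at $v$, namely $F$; using this at every vertex, one verifies that $\rho'$ also satisfies Tits' independence, so $H$ decomposes in $\rho'$ as a direct product of the two $\rho'$-half-tree stabilisers of $e''$. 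These factors must therefore coincide with $G_{(h_v)}^\rho$ and $G_{(h_w)}^\rho$, and in particular $G_{(h_w)}^\rho$ — which is contained in $U_w$ — fixes the $\rho'$-half-tree on the $w''$ side, forcing the $\rho'$-stabiliser of $w''$ to contain, and hence equal, $U_w$. Thus $w'' = w'$ and $v' \sim w'$ in $\rho'$; extending the correspondence $v \mapsto v',\ w \mapsto w'$ by $G$-equivariance gives the desired tree isomorphism.

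For (i), given $U(F)^+ \cong U(F')^+$ with $F \le \Sym(d)$ and $F' \le \Sym(d')$, Lemma~\ref{lem:basic}(viii) forces $d = d'$. Applying (ii) to each side yields a $G$-equivariant equivalence of the two standard actions on $T_d$, equivalently $U(F')^+ = \phi \, U(F)^+ \, \phi^{-1}$ for some $\phi \in \Aut(T_d)$. The permutation $\pi \in \Sym(\mathbf d)$ by which $\phi$ acts on the colours of the edges at a fixed vertex then intertwines the two local actions, giving $F' = \pi F \pi^{-1}$ and hence $F \cong F'$ as permutation groups.
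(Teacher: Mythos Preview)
Your opening moves---using Lemma~\ref{lem:basic}(vii)--(viii) to pin down the tree and the maximal compact subgroups, and characterising the edge-stabilisers at a vertex as those intersections $U_v\cap B$ with $B\in\mathcal U_2$ of minimal index~$d$---are essentially the paper's proof. But the paper stops right there: taking any maximal compact $K$, another maximal compact $L$ minimising $[K:K\cap L]$, and $C=\bigcap_{g\in K} g^{-1}(K\cap L)g$, it observes that minimality forces $K\cap L = G_v\cap G_z$ for a neighbour $z$ of the fixed vertex $v$ of $K$, so $C$ is the kernel of the $K$-action on $E(v)$. Hence $K/C\cong F$, and its conjugation action on the $K$-conjugates of $K\cap L$ is permutation-isomorphic to $F$ on~$\mathbf d$. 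Since this construction uses only the abstract topological group, it recovers both $F$ and the local action from $G$ alone, which is what (i) and (ii) require; no global comparison of two tree actions is ever made.

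Your divergence---building an explicit $G$-equivariant tree isomorphism by showing that adjacency is preserved---has a genuine gap at the step you yourself flag as hardest. You claim that ``one verifies that $\rho'$ also satisfies Tits' independence'' from the fact that the local action at every $\rho'$-vertex is $F$, but this does not follow: Tits' independence is a global condition on fixators of half-trees and is not implied by the local permutation representation. Without it you have no direct-product splitting of $H$ on the $\rho'$ side, and the Krull--Schmidt matching has nothing to bite on. Even granting both splittings, your identification of which factor is which is not argued: both $G^\rho_{(h_v)}$ and $G^\rho_{(h_w)}$ lie in $H=U_v\cap U_w$, so ``contained in $U_w$'' does not single out $G^\rho_{(h_w)}$, and nothing prevents it from coinciding with the $\rho'$-fixator on the $v'$ side rather than the $w''$ side. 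The paper's route sidesteps all of this by never attempting an edge-by-edge comparison of the two actions.
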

\begin{proof}
    Let $F  \leq  \Sym(d)$ be a group satisfying the conditions of Section~\ref{sec:BM}.
    We will show that we can recover $F$ from the group $G = U(F)^+$ alone, and that the local action
    around every vertex of the tree is given by the action of $F$ on $\mathbf{d}$;
    this will simultaneously prove (i) and (ii).

    By Lemma~\ref{lem:basic}(viii), we know that the number $d$, which is the degree of the tree $T$ on which $G$ acts,
    is uniquely determined by $G$.
    Let $K$ be an arbitrary maximal compact subgroup of $G$, and let $L$ be another
    maximal compact subgroup of $G$ minimizing the index $[K : K \cap L]$.
    Define 
    \[ C = \bigcap_{g \in K} g\inv (K \cap L) g. \]
    We claim that $K/C \cong F$ and that the conjugation action of $K/C$ on
    the set of conjugates of $K \cap L$ in $K$
    is equivalent to the permutation action of $F$ on~$\mathbf{d}$.

    By Lemma~\ref{lem:basic}(vii), $K$ and $L$ are two (different)
    vertex stabilisers, say $K = G_v$ and $L = G_w$ with $v,w \in V(T)$.
    Let $z$ be the unique neighbour of $v$ lying on the unique path from $v$ to $w$;
    then $G_v \cap G_w \leq G_v \cap G_z$, so by the minimality of the index
    $[K : K \cap L]$, we have $K \cap L = G_v \cap G_z$.
    Since $K = G_v$ acts transitively on the star $E(v)$, we see that
    $C$ is equal to the pointwise stabiliser of $E(v)$, \emph{i.e.}\@ it is the kernel
    of the action of $K$ on $E(v)$.
    We conclude that $K/C \cong F$, and the action of $K/C$ on $E(v)$ is precisely
    given by the action of $F$ on $\mathbf{d}$.
\end{proof}


We can summarise the relations between the structure of $F$ (as a permutation group), $U(F)^+$ (as a topological group),
and $U(F)^+_v$ (as the commensurability class of the profinite group $U(F)^+_v$) in the following diagram.

\[ \bfig
  \node a(800,400)[U(F)^+_v]
  \node b(0,800)[F]
  \node c(0,0)[U(F)^+]
  \arrow|l|/@<2pt>/[c`b;\text{Thm.~\ref{th:global}}]
  \arrow|r|/@<2pt>/[b`c;\text{def.}]
  \arrow|l|/@<2pt>/[a`b;\text{\bf\normalsize ?}]
  \arrow|r|/@<2pt>/[b`a;\text{\ Prop.~\ref{pr:wr}}]
  \arrow|l|/@<-2pt>/[a`c;\text{\bf\normalsize ?}]
  \arrow|r|/@<-2pt>/[c`a;\text{\quad \ cpt.\@ open subgps.}]
\efig \]

The two remaining question marks correspond precisely to George Willis's question~\cite[Problem~4.3]{Willis07}.
It turns out that already for this class of groups, the answer to this question is negative.
More precisely, we will show the existence of two non-isomorphic groups $U(F)^+ \not\cong U(F')^+$ with isomorphic
edge stabilisers $U(F)^+_e \cong U(F')^+_{e}$.

\begin{prop}
    Assume that $F$ and $F'$ are two non-isomorphic subgroups of $\Sym(d)$ with equal point stabilisers $F_a = F'_a$
    acting on $d-1$ elements.
    Then $U(F)^+_e \cong U(F')^+_e$ but $U(F)^+ \not\cong U(F')^+$,
    \emph{i.e.}\@, the groups $U(F)^+$ and $U(F')^+$ are locally isomorphic but not isomorphic.
\end{prop}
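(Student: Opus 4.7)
The plan is to separate the two assertions and observe that each reduces essentially to a result already proved in the paper. The local isomorphism $U(F)^+_e \cong U(F')^+_e$ follows by direct inspection of the structure theorem for edge stabilisers given in Proposition \ref{pr:wr}. That result exhibits $U(F)^+_e$ as an inverse limit $\varprojlim(D_n \times D_n)$ with $D_1 = F_a$ and $D_n = F_a \wr D_{n-1}$, where the wreath products carry the imprimitive action and each factor of $F_a$ acts on the $(d-1)$-element set complementary to the chosen fixed point. The crucial observation is that this inductive recipe depends only on the permutation group $F_a$ on $d-1$ letters, and not on the ambient group $F \leq \Sym(d)$. Since by hypothesis $F_a$ and $F'_a$ coincide as permutation groups on $d-1$ points, the two resulting inverse systems are literally identical, so their limits are isomorphic as profinite (hence as topological) groups. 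Since edge stabilisers are compact open subgroups, this is precisely what it means for $U(F)^+$ and $U(F')^+$ to be locally isomorphic.

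For the non-isomorphism $U(F)^+ \not\cong U(F')^+$, I would invoke Theorem~\ref{th:global}(i) in contrapositive form: an isomorphism $U(F)^+ \cong U(F')^+$ would force $F$ and $F'$ to be equivalent as permutation groups on $\mathbf d$, contradicting the hypothesis. Here ``non-isomorphic subgroups of $\Sym(d)$'' should be read as ``inequivalent permutation representations on $\mathbf d$'', which is the sense appearing in the conclusion of Theorem~\ref{th:global} and the only sense in which the hypothesis is actually needed.

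There is no real analytical obstacle; the only bookkeeping point to verify is that both $F$ and $F'$ satisfy the standing assumption of Section \ref{sec:BM}, \emph{i.e.}\ they are transitive and generated by their point stabilisers, so that $U(F)^+$ and $U(F')^+$ are Burger--Mozes groups in the sense required and the two cited results apply. The genuine content of the proposition is therefore not the proof itself but the existence of concrete pairs $(F,F')$ realising the hypothesis, since such examples are what turn the statement into an explicit negative answer to Willis' question~\cite[Problem~4.3]{Willis07}.
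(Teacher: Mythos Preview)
Your proof is correct and follows exactly the same approach as the paper's: invoke Proposition~\ref{pr:wr} for the edge-stabiliser isomorphism and Theorem~\ref{th:global}(i) in contrapositive for the global non-isomorphism. Your additional remarks about the standing assumptions and the meaning of ``non-isomorphic'' are reasonable clarifications but not part of the paper's (very brief) argument.
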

\begin{proof}
    Since $F_a$ and $F'_a$ are isomorphic permutation groups, Proposition~\ref{pr:wr} implies that $U(F)^+_e \cong U(F')^+_e$.
    On the other hand, $F \not\cong F'$, and it follows from Theorem~\ref{th:global}(i) that $U(F)^+ \not\cong U(F')^+$.
\end{proof}

We now point out that we can indeed find two non-isomorphic subgroups $F, F' \leq \Sym(d)$ satisfying the conditions of
Section~\ref{sec:BM} (\emph{i.e.}\@ transitive and generated by their point stabilisers) that have equal point stabilisers.
The following example is the smallest possible (in terms of the permutation degree $d$).

\begin{example}
    Let $d=8$, let $F = \PSL(2,7) \leq \Sym(8)$ with its natural action on the projective line $\GF(7) \cup \{ \infty \}$,
    and let $F' = \AGammaL(1,8) \cong \AGL(1,8) \rtimes \langle \sigma \rangle \leq \Sym(8)$
    with its natural action on the affine line $\GF(8)$,
    where $\sigma$ is the generator of $\Gal\bigl(\GF(8)/\GF(2)\bigr) \cong C_3$.
    Then $F$ and $F'$ are two non-isomorphic doubly transitive permutation groups, and they have isomorphic point stabilizers
    $F_\infty \cong F'_0 \cong C_7 \rtimes C_3$ with an equivalent permutation action.
    Indeed, the equivalence of the permutation representation is induced by the bijection
    \[ \beta \colon \GF(7) \to \GF(8)^\times \colon n \mapsto \zeta^n \,, \]
    where $\zeta$ is a generator of the multiplicative group $\GF(8)^\times$.
    We conclude that the groups $U(F)^+$ and $U(F')^+$ are two non-isomorphic groups
    with isomorphic edge stabilizers and commensurable vertex stabilizers.
\end{example}


\section{Rigidity for simple t.d.l.c.\@ groups}


\subsection{Germs of automorphisms}\label{sec:Germ}

We recall some elements of terminology which were already defined in the introduction. 
Let $\mS$ denote the class of non-discrete topologically
simple totally disconnected locally compact groups. We say that two
groups $G_1, G_2 \in \mS$ are {\bf locally isomorphic} if
they contain isomorphic compact open subgroups. Moreover, a group $G
\in \mS$ will be called {\bf Lie-reminiscent} if any $H \in
\mS$ locally isomorphic to $G$ is in fact (globally)
isomorphic to $G$.

Any two compact open subgroups of a totally disconnected locally compact group $G$ are
commensurable. In particular, the commensurability class of any
compact open subgroup of $G$ depends only on its local structure,
in the sense that it can be reconstructed from any identity
neighbourhood. This commensurability class determines an algebraic object, namely the group $\acomm(G)$ of \textbf{germs of automorphisms}. This is defined as the quotient of the set of all isomorphisms $f: U \to V$ between compact open subgroups of $G$, divided by the equivalence relation which identifies two isomorphisms  $f_1: U_1 \to V_1$ and $f_2: U_2 \to V_2$ if they coincide on some open subgroup of $U_1 \cap U_2$. 

Notice that if $G$ is compact, hence profinite, then $\acomm(G)$ is nothing but the group of {\bf abstract commensurators} of $G$, which we denote by $\mathrm{Comm}(G)$. In fact, for any totally disconnected locally compact group $G$, we have $\acomm(G) =\acomm(U)$ for any compact open subgroup $U \leq G$. In particular $\acomm(G)$
depends only on the commensurability class of compact open subgroups of $G$. 

Since $G$ commensurates its compact open subgroups,
there is a canonical homomorphism
\[ \comm \colon G \to \acomm(G) \colon g \mapsto \comm(g).\]
Following \cite{BarneaErshovWeigel}, we endow $\acomm(G)$ with the \textbf{strong topology}, which is defined as the finest group topology which makes the homomorphism $\comm$ continuous. In this way $\acomm(G)$ is a topological group, which need not be Hausdorff in general. 

\begin{prop}\label{prop:Germ:basic} 
Let $G$ be a totally disconnected locally compact group. 
\begin{enumerate}[\rm (i)]
\item The kernel of $\comm : G \to \acomm(G)$ is the quasi-centre $\QZ(G)$ of $G$. 

\item If $\QZ(G)$ is closed, then $\acomm(G)$ is totally disconnected and locally compact. 

\item If $\QZ(G)$ is discrete, then $G$ is locally isomorphic to $\acomm(G)$.
\end{enumerate}
\end{prop}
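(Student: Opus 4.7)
The plan is to unwind the definitions of $\acomm(G)$ and its strong topology. For (i), observe that $\comm(g)$ is represented by the conjugation isomorphism $x\mapsto gxg\inv$ restricted to any compact open subgroup $U\leq G$; this germ is trivial precisely when conjugation by $g$ agrees with the identity on some open subgroup $V\leq U$, i.e., when $V$ is contained in the centraliser $C_G(g)$. Hence $\ker(\comm)$ is exactly the set of elements of $G$ admitting an open centraliser, which is $\QZ(G)$ by definition.

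For (ii), the strategy is to show that the family $\{\comm(U)\mid U\text{ a compact open subgroup of }G\}$ forms a neighbourhood basis of the identity for the strong topology on $\acomm(G)$. When $\QZ(G)$ is closed, each $\comm(U)\cong U/(\QZ(G)\cap U)$ is a profinite group, being the quotient of a profinite group by a closed normal subgroup. I would first verify that this family is filtered (since any two compact open subgroups of $G$ are commensurable) and stable under conjugation by an arbitrary $\alpha\in\acomm(G)$ (since $\alpha$ is represented by an isomorphism between two compact open subgroups of $G$, conjugation by $\alpha$ sends $\comm(U)$ inside some $\comm(U')$). This produces a group topology on $\acomm(G)$ with a basis of compact open neighbourhoods of the identity, making $\acomm(G)$ totally disconnected and locally compact. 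It then remains to check that this topology coincides with the strong topology, which reduces to the observation that $\comm\inv(\comm(U))=U\cdot\QZ(G)$ is open in $G$, and conversely any group topology on $\acomm(G)$ making $\comm$ continuous must contain each $\comm(U)$ as a neighbourhood of the identity.

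For (iii), assume $\QZ(G)$ is discrete. Given a compact open subgroup $U\leq G$, the intersection $U\cap\QZ(G)$ is a discrete subgroup of the compact group $U$, hence finite. Since $U$ is profinite and Hausdorff, for each non-identity element $g\in U\cap\QZ(G)$ there is an open subgroup $V_g\leq U$ with $g\notin V_g$; then $V:=\bigcap_g V_g$ is an open subgroup of $U$ satisfying $V\cap\QZ(G)=\{1\}$. By (i), the restriction $\comm|_V\colon V\to\comm(V)$ is injective, and by (ii) its target is Hausdorff; being a continuous bijective homomorphism between compact groups, it is a topological isomorphism. Since $V$ is open in $G$ and $\comm(V)$ is open in $\acomm(G)$, the groups $G$ and $\acomm(G)$ share a common compact open subgroup up to isomorphism, and so are locally isomorphic.

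The main obstacle is part (ii): carefully checking that the candidate neighbourhood basis $\{\comm(U)\}$ actually defines a group topology, and that this topology coincides with the strong topology. Joint continuity of multiplication and inversion are routine, but continuity of conjugation by arbitrary elements of $\acomm(G)$ crucially relies on the fact that each germ can be represented by an isomorphism between compact open subgroups of $G$.
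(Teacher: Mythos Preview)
Your arguments for (i) and (iii) are correct and essentially match the paper's proof, with more detail spelled out. The paper dismisses (i) as ``immediate from the definition'' and for (iii) simply asserts the existence of a compact open $U$ with $U\cap\QZ(G)=1$; your explicit construction of such a subgroup via finite intersections is a welcome elaboration.

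For (ii), the paper takes a shorter route: it invokes (from \cite{BarneaErshovWeigel}) that $\comm(G)$ is open in $\acomm(G)$ for the strong topology, identifies $\comm(G)\cong G/\QZ(G)$, and observes that the latter is totally disconnected locally compact when $\QZ(G)$ is closed. Your approach of building the topology explicitly from the basis $\{\comm(U)\}$ is more hands-on and in principle recovers what the paper imports from \cite{BarneaErshovWeigel}.

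However, the final verification step in your sketch is stated backwards. You write that ``any group topology on $\acomm(G)$ making $\comm$ continuous must contain each $\comm(U)$ as a neighbourhood of the identity''. This is false: the indiscrete topology on $\acomm(G)$ makes $\comm$ continuous, yet $\comm(U)$ is not a neighbourhood of the identity there. Worse, even if the claim held, it would only show that your topology $\sigma$ is \emph{coarser} than every admissible topology, i.e.\ that $\sigma$ is the coarsest rather than the finest. What you actually need is the reverse inclusion: for any group topology $\tau$ making $\comm$ continuous and any $\tau$-identity-neighbourhood $V$, the preimage $\comm^{-1}(V)$ is open in $G$ and hence contains some compact open $U$, so $\comm(U)\subseteq V$. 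This shows every $\tau$-open set is $\sigma$-open, i.e.\ $\tau\subseteq\sigma$, whence $\sigma$ is indeed the finest such topology. With this correction your argument goes through.
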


\begin{proof}
\begin{compactitem}
\item[(i)]  Immediate from the definition of $\QZ(G)$, see  Definition~\ref{def:QZ}.

\item[(ii)] By definition of the strong topology, the group $\comm(G)$ is open in $\acomm(G)$. By (i) it is isomorphic to $G/\QZ(G)$, which is totally disconnected and locally compact provided $\QZ(G)$ is closed. In that case,  the profinite identity neighbourhoods of $G/\QZ(G)$ are also identity neighbourhoods of $\acomm(G)$. The desired result follows. 

\item[(iii)] If $\QZ(G)$ is discrete, then there is some compact open subgroup $U  \leq  G$ such that $U \cap \QZ(G) = 1$. Thus $G$ and $G/\QZ(G)$ are locally isomorphic and the result follows from (i). \qedhere
\end{compactitem}
\end{proof}

We emphasize that the quasi-centre need not be closed in general, even for simple groups. In fact, the examples of non-compactly generated simple groups
constructed by G.~Willis in \cite[\S3]{Willis07} have a dense
quasi-centre. However, the situation is more favorable in the case of compactly generated topologically simple groups, as illustrated by the following result due to Barnea--Ershov--Weigel (see also \cite[Prop.~4.3]{Caprace-Monod-monolith}). 

\begin{prop}[{\cite[Theorem~4.8]{BarneaErshovWeigel}}]\label{prop:QZ}
Any compactly generated totally disconnected locally compact group with dense quasi-centre admits a basis of identity neighbourhoods consisting of compact open {normal} subgroups. In particular,  if $G \in \mS$ and if $G$ is compactly generated, then $\QZ(G) = 1$.
\end{prop}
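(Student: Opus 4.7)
The plan is to prove the main assertion first and then derive the statement about $\mS$ as a formal consequence. For the main assertion, fix a compact open subgroup $U \leq G$; I will construct a compact open subgroup $M \leq U$ that is normal in $G$. Since compact open subgroups form a basis of identity neighbourhoods in any totally disconnected locally compact group, such $M$'s then automatically form the required basis of compact open normal subgroups.

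The first move is to exploit density of $\QZ(G)$ to write $G = \QZ(G) \cdot U$, noting that the image of $\QZ(G)$ in the discrete quotient $G/U$ is dense and hence surjective. Picking a compact symmetric generating set $K$, I can therefore cover $K$ by finitely many cosets $h_1U,\dots,h_nU$ with $h_i \in \QZ(G)$. Each centraliser $C_G(h_i)$ is open, so $W := \bigcap_{i=1}^n \bigl( C_G(h_i) \cap U \bigr)$ is a compact open subgroup of finite index in $U$, and is centralised by every $h_i$ by construction. Setting $M := \bigcap_{u \in U} u W u^{-1}$ gives a finite intersection, since $[U:W]<\infty$, so $M$ is compact open and normal in $U$; crucially, the inclusion $M \leq W$ ensures that each $h_i$ still centralises, hence normalises, $M$. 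Thus $M$ is normalised by $\langle U, h_1,\dots,h_n\rangle$, a subgroup that contains $K$ and therefore coincides with $G$, so $M \triangleleft G$ as required.

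For the second statement, let $G \in \mS$ be compactly generated. The closure $\overline{\QZ(G)}$ is a closed characteristic subgroup, so topological simplicity forces $\overline{\QZ(G)} \in \{ \{1\}, G \}$. If it equals $G$, the main assertion furnishes a basis of compact open normal subgroups of $G$; each such subgroup must be trivial or equal to $G$ by simplicity. The former would render $G$ discrete, whereas the latter would render $G$ compact and hence a nontrivial topologically simple profinite group, which is impossible since any such group has proper open normal subgroups coming from its finite continuous quotients. Both conclusions contradict $G \in \mS$, so $\overline{\QZ(G)} = \{1\}$ and therefore $\QZ(G) = 1$.

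The subtle point is arranging $M$ to be simultaneously $U$-invariant and $h_i$-invariant: the $U$-core construction guarantees the former, whereas the elementary remark that centralisation passes to subgroups guarantees the latter, so that the two halves of a generating set for $G$ can be handled at once. The density hypothesis is used in exactly one place, but decisively: it is what allows the cover of $K$ by cosets of $U$ to be refined to one whose representatives lie in $\QZ(G)$, after which only finitely many conjugates and centralisers need to be intersected.
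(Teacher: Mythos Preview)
The paper does not include its own proof of this proposition; it is quoted from \cite[Theorem~4.8]{BarneaErshovWeigel} (with a pointer also to \cite[Prop.~4.3]{Caprace-Monod-monolith}). Your argument is correct and is essentially the standard one: use density of $\QZ(G)$ to choose coset representatives for a compact generating set inside $\QZ(G)$, intersect their (open) centralisers with a given compact open subgroup $U$, and pass to the $U$-core. Each step is sound, and the observation that centralisation passes to subgroups is exactly what makes $M$ simultaneously $U$-normal and $h_i$-normalised.

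The only place that could be tightened is the final case analysis in the second part. Rather than arguing separately that a compact topologically simple profinite group must be finite, it is cleaner to observe directly that the compact open normal subgroups furnished by the first part form a \emph{basis} of identity neighbourhoods in a Hausdorff group; hence they cannot all equal $G$ (unless $G$ is trivial), so at least one of them equals $\{1\}$, forcing $G$ to be discrete and contradicting $G \in \mS$. Your detour through profinite groups reaches the same conclusion and is also valid, though the phrase ``impossible since any such group has proper open normal subgroups'' is slightly loose: finite simple groups are topologically simple profinite groups, and what you really mean is that an \emph{infinite} profinite group always has non-trivial proper open normal subgroups, so a topologically simple profinite group is finite, hence discrete.
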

 
Let $\Aut(G)$ denote the group of homeomorphic automorphisms of $G$.
Clearly such automorphisms also commensurate compact open subgroups
of $G$, so that we get a canonical homomorphism
\[ \kappa \colon \Aut(G) \to \acomm(G) \,; \]
note that $\comm$ factors through $\kappa$ via the map $G \to \Inn(G) \leq \Aut(G)$.
Following the terminology introduced in
\cite{BarneaErshovWeigel}, we say that a simple group $G \in
\mS$ is {\bf rigid} if $\kappa$ is bijective. Equivalently, this means that every germ of automorphism of $G$ extends to a unique global automorphism. 

It is useful to have several equivalent ways to express rigidity at our disposal.
\begin{prop}[\cite{BarneaErshovWeigel}]\label{pr:rigid}
Let $G \in \mS$ such that $\QZ(G)=1$.
    Then the following conditions are equivalent:
    \begin{enumerate}[\rm (a)]
        \item
        $G$ is rigid, \emph{i.e.}\@ $\kappa$ is bijective.
        \item
        $\kappa$ is surjective.
        \item
        $\comm(G)$ is a normal subgroup of $\acomm(G)$.
        \item
        Any isomorphism between a pair of compact open subgroups of $G$ can be extended to a unique automorphism of $G$.
    \end{enumerate}
\end{prop}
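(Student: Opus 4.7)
The implication (a) $\Rightarrow$ (b) is trivial, and (a) $\Leftrightarrow$ (d) is just a restatement of the definition of $\kappa$ together with the unique-extension condition. For (b) $\Rightarrow$ (c), I would simply observe that $\Inn(G) \trianglelefteq \Aut(G)$, that $\comm$ factors as $\kappa$ composed with the inclusion $G \cong \Inn(G) \hookrightarrow \Aut(G)$, and that a surjective homomorphism carries normal subgroups to normal subgroups. So the heart of the proof is (c) $\Rightarrow$ (a), which I would split into injectivity and surjectivity of $\kappa$.

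For injectivity of $\kappa$, the hypothesis $\QZ(G)=1$ alone is enough: if $\alpha \in \Aut(G)$ has trivial germ, then $\alpha$ restricts to the identity on some compact open subgroup $U \leq G$. For arbitrary $g \in G$ and $u \in U \cap g^{-1}Ug$, one has $gug^{-1} \in U$ and hence $\alpha(g)\,u\,\alpha(g)^{-1} = \alpha(gug^{-1}) = gug^{-1}$, so that $g^{-1}\alpha(g)$ centralises the open subgroup $U \cap g^{-1}Ug$. Therefore $g^{-1}\alpha(g) \in \QZ(G) = 1$, and $\alpha = \mathrm{id}$.

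For surjectivity, I would pick $\varphi \in \acomm(G)$ represented by an isomorphism $f\colon V \to V'$ between compact open subgroups. By (c), conjugation by $\varphi$ preserves $\comm(G)$; composing with the bijection $\comm\colon G \to \comm(G)$ (injective by $\QZ(G)=1$) yields an abstract automorphism $\alpha$ of $G$, characterised by $\varphi\,\comm(h)\,\varphi^{-1} = \comm(\alpha(h))$ for every $h \in G$. The key step is then to show that $\alpha$ agrees with $f$ on an open subgroup of $V$, which will simultaneously give continuity of $\alpha$ (a homomorphism continuous on an open subgroup is continuous) and the identity $\kappa(\alpha) = \varphi$. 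For $h \in V$, choose a compact open $W \leq V$ with $hWh^{-1} \leq V$; unwinding the composition shows that the germ $\varphi\,\comm(h)\,\varphi^{-1}$ is represented by the map $v \mapsto f(h\,f^{-1}(v)\,h^{-1})$ on $f(W)$. Equating this with the germ of conjugation by $\alpha(h)$, we obtain on some open $W_1 \leq W$ the identity $f(h)\,f(w)\,f(h)^{-1} = \alpha(h)\,f(w)\,\alpha(h)^{-1}$ for every $w \in W_1$, so that $f(h)^{-1}\alpha(h)$ centralises the open subgroup $f(W_1)$. Invoking $\QZ(G)=1$ once more gives $\alpha(h) = f(h)$, as required.

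The only delicate point is this last step, namely producing a continuous automorphism out of the purely algebraic data provided by conjugation inside $\acomm(G)$; everything rests on the double use of $\QZ(G)=1$, first to identify $G$ with $\comm(G)$ and then to force $\alpha$ to coincide locally with the chosen representative $f$ of $\varphi$.
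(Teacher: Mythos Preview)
Your proof is correct and self-contained. The paper's own proof is much terser: it observes that (d) is exactly the definition of rigidity used in \cite{BarneaErshovWeigel}, then simply cites \cite[Proposition~3.7]{BarneaErshovWeigel} for the equivalence of (a), (c) and (d), proves (a)~$\Rightarrow$~(b) trivially, and gives the same one-line argument as you for (b)~$\Rightarrow$~(c). In other words, the paper outsources the substantive implication (c)~$\Rightarrow$~(a) to the reference, whereas you supply the actual argument. Your proof of (c)~$\Rightarrow$~(a) --- building the abstract automorphism $\alpha$ from conjugation by $\varphi$ on $\comm(G)$, then using $\QZ(G)=1$ twice to force $\alpha|_V = f$ --- is precisely the mechanism behind the cited result, so there is no genuine divergence in method, only in level of detail. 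One small point you leave implicit: to conclude $\alpha \in \Aut(G)$ (homeomorphic automorphisms) you should also note that $\alpha^{-1}$ is continuous, which follows by the same reasoning since $\alpha^{-1}|_{V'} = f^{-1}$.
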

\begin{proof}
    Condition (d) is precisely the definition of rigidity as in \cite{BarneaErshovWeigel}, and hence
    \cite[Proposition~3.7]{BarneaErshovWeigel} shows that (a), (c) and (d) are equivalent.
    Of course (a) implies (b).
    Assume finally that (b) holds; then $\comm(G) = \kappa(\Inn(G)) \unlhd \kappa(\Aut(G)) = \acomm(G)$,
    so (c) holds.
\end{proof}

Our next goal is to clarify the relation between the notion of rigidity and of Lie-reminiscence for the elements of $\mS$.  We shall need the following two  lemmas.

\begin{lemma}\label{lem:QZopen}
Let $G \in \mS$ be compactly generated. Then every open subgroup of $\acomm(G)$ has trivial quasi-centre. In particular $\acomm(\acomm(G)) = \acomm(G)$.
\end{lemma}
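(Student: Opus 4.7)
The plan is to reduce the statement to showing that $\QZ(\acomm(G))$ is trivial, and then to prove the latter by identifying the canonical map $\comm_{\acomm(G)} \colon \acomm(G) \to \acomm(\acomm(G))$ with the identity under a natural identification $\acomm(\acomm(G)) = \acomm(G)$.

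First I reduce. If $H \leq \acomm(G)$ is open and $g \in \QZ(H)$, then the centraliser $C_H(g)$ is open in $H$, hence in $\acomm(G)$ because $H$ itself is open in $\acomm(G)$; therefore $g \in \QZ(\acomm(G))$. So it suffices to prove $\QZ(\acomm(G)) = 1$. Since $G$ is compactly generated and belongs to $\mS$, Proposition~\ref{prop:QZ} yields $\QZ(G) = 1$. Hence $\comm \colon G \to \acomm(G)$ is injective, and by the definition of the strong topology it embeds $G$ as an open subgroup of $\acomm(G)$ with unchanged topology. In particular, for any compact open subgroup $U \leq G$, the image $\comm(U)$ is compact open in $\acomm(G)$, and the canonical identifications of abstract commensurators within a commensurability class of compact open subgroups give
\[ \acomm(\acomm(G)) = \acomm(\comm(U)) = \acomm(U) = \acomm(G) \, . \]
This already yields the ``in particular'' assertion $\acomm(\acomm(G)) = \acomm(G)$.

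Next I apply Proposition~\ref{prop:Germ:basic}(i) to $\acomm(G)$ to obtain $\QZ(\acomm(G)) = \ker(\comm_{\acomm(G)})$; it remains to check that under the identification displayed above, $\comm_{\acomm(G)}$ is the identity. Let $g \in \acomm(G)$ be represented by a group isomorphism $f \colon A \to B$ between compact open subgroups of $G$, with $U \leq A$ compact open. For each $u \in U$, writing $\comm(u)$ as the germ of the conjugation-by-$u$ map $\iota_u$, the element $g\,\comm(u)\,g^{-1}$ is the germ of $f \circ \iota_u \circ f^{-1}$. Because $f$ is a group homomorphism,
\[ f(\iota_u(f^{-1}(b))) = f(u f^{-1}(b) u^{-1}) = f(u) \cdot b \cdot f(u)^{-1} = \iota_{f(u)}(b) \]
for every $b \in B$, so $g\,\comm(u)\,g^{-1} = \comm(f(u))$ in $\acomm(G)$. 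Translating via the isomorphism $\comm \colon U \to \comm(U)$, the conjugation action of $g$ on $\comm(U) \leq \acomm(G)$ corresponds to $f \colon U \to f(U)$, whose germ in $\acomm(\comm(U)) = \acomm(G)$ is $g$ itself. Hence $\comm_{\acomm(G)}(g) = g$, so $\comm_{\acomm(G)}$ is the identity and $\QZ(\acomm(G)) = 1$.

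The main obstacle is precisely the compatibility check in the last paragraph: that the canonical homomorphism $\acomm(G) \to \acomm(\acomm(G))$ becomes the identity once one identifies $\acomm(\acomm(G))$ with $\acomm(G)$ via a common compact open subgroup. This is essentially bookkeeping with germ representatives, using that any germ is realised by a group homomorphism; the rest of the argument is then purely formal.
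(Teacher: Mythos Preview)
Your proof is correct, but it takes a different route from the paper's. Both arguments begin with the same reduction: for any open subgroup $H \leq \acomm(G)$ one has $\QZ(H) \subseteq \QZ(\acomm(G))$, so it suffices to prove $\QZ(\acomm(G)) = 1$. From there the paper simply quotes an external result, namely \cite[Prop.~3.2(c)]{BarneaErshovWeigel}, which asserts directly that $\QZ(\Comm(U)) = 1$ whenever the compact open subgroup $U$ has trivial quasi-centre; the ``in particular'' clause is then deduced afterwards via Proposition~\ref{prop:Germ:basic}(iii). You instead reverse the logical order: you first establish the identification $\acomm(\acomm(G)) = \acomm(G)$ using only $\QZ(G)=1$, and then exploit it to show that the canonical map $\comm_{\acomm(G)}$ is literally the identity under this identification, whence its kernel $\QZ(\acomm(G))$ is trivial. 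Your approach is more self-contained---it unwinds exactly the bookkeeping that the cited proposition packages---while the paper's is shorter but depends on the external reference. The germ computation you carry out (showing $g\,\comm(u)\,g^{-1} = \comm(f(u))$ and tracing this through the identification) is precisely what is needed, and your care in spelling it out is justified since the compatibility of the two identifications is the only non-formal point.
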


\begin{proof}
Since $G \in \mS$ is compactly generated, we have $\QZ(G) = 1$ and hence $\QZ(U)=\Comm(U)=1$ for any compact open subgroup $U  \leq  G$. By \cite[Prop.~3.2(c)]{BarneaErshovWeigel}, it follows that $\QZ(\acomm(G)) = \QZ(\Comm(U)) = 1$. The first assertion follows since $\QZ(\acomm(G))$ contains the quasi-centre of any open subgroup of  $\acomm(G)$.
Since $\acomm(G)$ depends only on the commensurability class of the compact open subgroups of $G$, the second statement now follows from Proposition~\ref{prop:Germ:basic}(iii).
\end{proof}

\begin{lemma}\label{le:GH}
	Let $G,H \in \mS$ be two locally isomorphic compactly generated groups.
	Then $\langle \comm(G), \comm(H) \rangle \leq \acomm(G) = \acomm(H)$ is
	topologically simple.
\end{lemma}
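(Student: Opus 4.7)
The plan is to show that any non-trivial closed normal subgroup $N$ of $L := \langle \comm(G), \comm(H) \rangle$ coincides with $L$, by intersecting $N$ with each of $\comm(G)$ and $\comm(H)$ and exploiting the topological simplicity of both.

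First I would observe that Proposition~\ref{prop:QZ} gives $\QZ(G) = \QZ(H) = 1$, so by Proposition~\ref{prop:Germ:basic} the canonical maps $\comm \colon G \to \acomm(G)$ and $\comm \colon H \to \acomm(H) = \acomm(G)$ are topological group isomorphisms onto open subgroups. In particular, $\comm(G)$ and $\comm(H)$ are open topologically simple subgroups of $\acomm(G)$, and $L$ is itself open in $\acomm(G)$ since it contains $\comm(G)$.

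Now let $N \unlhd L$ be non-trivial and closed. The intersection $N \cap \comm(G)$ is a closed normal subgroup of the topologically simple group $\comm(G)$, hence equals either $1$ or $\comm(G)$. If $N \cap \comm(G) = 1$, then $N$ is discrete in $L$ (as $\comm(G)$ is open), so $N \leq \QZ(L)$; but Lemma~\ref{lem:QZopen}, applied to the open subgroup $L \leq \acomm(G)$, yields $\QZ(L) = 1$, forcing $N = 1$ and contradicting the hypothesis. Hence $\comm(G) \leq N$. Running the same dichotomy for $\comm(H)$: if $\comm(H) \leq N$ then $N \supseteq \langle \comm(G), \comm(H) \rangle = L$ and we are done. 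Otherwise $N \cap \comm(H) = 1$; but then $N$ is open in $L$ (containing the open subgroup $\comm(G)$), so $L/N$ is discrete, and the quotient map restricted to $\comm(H)$ is a continuous homomorphism with trivial kernel. Pulling back the singleton $\{1\} \subseteq L/N$ through this continuous injection shows that $\{1\}$ is open in $\comm(H)$, contradicting the fact that $\comm(H) \cong H$ is non-discrete.

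The only real content beyond bookkeeping is the appeal to Lemma~\ref{lem:QZopen}, which rules out the possibility that a non-trivial closed $N \leq L$ meets the open subgroup $\comm(G)$ trivially; once that is in place, the remaining case (where $\comm(G) \leq N$ but $N \cap \comm(H) = 1$) is knocked out by the elementary fact that a continuous injection from a non-discrete topological group to a discrete group cannot exist.
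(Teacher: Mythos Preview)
Your proof is correct and follows essentially the same approach as the paper's: intersect a non-trivial closed normal subgroup $N$ with each of $\comm(G)$ and $\comm(H)$, and use Lemma~\ref{lem:QZopen} together with topological simplicity to force both inclusions. The only difference is cosmetic: the paper dispatches the $\comm(H)$ case by symmetry (the identical argument via Lemma~\ref{lem:QZopen} applies, since $\comm(H)$ is also open), whereas you give a separate argument exploiting that $N \supseteq \comm(G)$ is already open. Your alternative works fine, but the symmetric route is shorter and avoids the extra discreteness step.
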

\begin{proof}
	Since $G$ and $H$ are locally isomorphic, we have $\acomm(G) = \acomm(H) = \acomm$;
	recall that $\comm(G)$ and $\comm(H)$ are open subgroups of $\acomm$.
	Now let $N \neq 1$ be a closed normal subgroup of $\langle \comm(G), \comm(H) \rangle$.
	If $N \cap \comm(G)$ were trivial, then $N$ would be a discrete subgroup of $\langle \comm(G), \comm(H) \rangle$, which is impossible by Lemma~\ref{lem:QZopen}.
	Since $\comm(G) \cong G$ is topologically simple, this implies $\comm(G) \leq N$, and similarly
	$\comm(H) \leq N$.
	We conclude that $N = \langle \comm(G), \comm(H) \rangle$ as claimed.
\end{proof}
\begin{cor}
	Let $G, H \in \mS$ be compactly generated. If $G$ and $H$ are
	locally isomorphic, then there is a compactly generated group $S \in
	\mS$ in which both $G$ and $H$ embed as open subgroups.
\end{cor}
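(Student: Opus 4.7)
The plan is to take $S := \langle \comm(G), \comm(H) \rangle \leq \acomm(G) = \acomm(H)$, the subgroup already appearing in Lemma~\ref{le:GH}, and verify that it has all the required properties. Since $G, H \in \mS$ are compactly generated, Proposition~\ref{prop:QZ} yields $\QZ(G) = \QZ(H) = 1$, so by Proposition~\ref{prop:Germ:basic}(i) the canonical maps $\comm \colon G \to \acomm(G)$ and $\comm \colon H \to \acomm(H) = \acomm(G)$ are injective. The proof of Proposition~\ref{prop:Germ:basic}(ii) moreover shows that their images are \emph{open} subgroups of $\acomm(G)$ and that $\comm$ is a topological isomorphism onto its image in each case. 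Consequently, both $G$ and $H$ will embed as open subgroups of $S$ as soon as one knows that $S$ is open in $\acomm(G)$ and contains $\comm(G)$ and $\comm(H)$, which holds by the very definition of $S$.

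It then remains to check that $S \in \mS$ and that $S$ is compactly generated. Topological simplicity of $S$ is precisely the content of Lemma~\ref{le:GH}. Since $S$ contains $\comm(G)$ as an open subgroup, $S$ is itself open in $\acomm(G)$, hence locally compact and totally disconnected; it is non-discrete because $\comm(G) \cong G$ is non-discrete, so indeed $S \in \mS$. For compact generation, fix compact generating sets $K_G \subseteq G$ and $K_H \subseteq H$; then $\comm(K_G) \cup \comm(K_H)$ is a compact subset of $S$, and its image generates $\langle \comm(G), \comm(H)\rangle = S$ by construction.

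No step seems particularly delicate: once the natural ambient group $\acomm(G) = \acomm(H)$ is identified and the fact that compact generation together with topological simplicity forces the quasi-centre to be trivial (Proposition~\ref{prop:QZ}) is invoked, the open-embedding and compact-generation claims are automatic, while topological simplicity is already done in Lemma~\ref{le:GH}.
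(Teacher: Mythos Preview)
Your proof is correct and follows exactly the same approach as the paper: take $S = \langle \comm(G), \comm(H) \rangle \leq \acomm(G) = \acomm(H)$, invoke Lemma~\ref{le:GH} for topological simplicity, and Proposition~\ref{prop:QZ} for injectivity of $\comm$. The paper's own proof is a one-liner referring to Lemma~\ref{le:GH} and Proposition~\ref{prop:QZ}; you have simply spelled out the remaining routine verifications (that $S$ is non-discrete, totally disconnected, locally compact, compactly generated, and that the embeddings are open and topological), all of which the paper leaves implicit.
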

\begin{proof}
	This immediately follows from Lemma~\ref{le:GH} since $\comm$ is injective because $\QZ(G)$ and $\QZ(H)$ are trivial, see Proposition~\ref{prop:QZ}.
\end{proof}

In view of Proposition~\ref{prop:QZ}, the following result applies notably to any compactly generated group in $\mS$.

\begin{prop}\label{prop:AdmissibleRigid}
Let $G \in \mS$ be such that $\QZ(G)= 1$. Then:
\begin{enumerate}[\rm (i)]
\item Up to isomorphism, there is a unique rigid simple group $\tilde G \in \mS$ which is locally isomorphic to $G$; it can be defined as the intersection of all non-trivial closed normal subgroups of $\acomm(G)$. 

\item If $G$ is Lie-reminiscent, then it is rigid.

\item If $G$ is rigid, then any compactly generated group $H \in \mS$ locally
isomorphic to $G$ is isomorphic to some open subgroup of $G$.
\end{enumerate}
\end{prop}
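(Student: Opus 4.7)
The plan is to construct $\tilde G$ explicitly as the monolith of $\acomm(G)$, i.e., the intersection of all non-trivial closed normal subgroups. The starting observation is that every non-trivial closed normal subgroup $N$ of $\acomm(G)$ contains $\comm(G)$. Indeed, $N \cap \comm(G)$ is normal in $\comm(G) \cong G$, so by simplicity of $G$ is either $\comm(G)$ or trivial; in the latter case $N$ becomes discrete (since $\comm(G)$ is open), so the continuous conjugation map $\acomm(G) \to N$, $g \mapsto g n g\inv$, has open fibres for every $n \in N$, forcing $N \subseteq \QZ(\acomm(G)) = 1$ by Lemma~\ref{lem:QZopen} and contradicting $N \neq 1$. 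It follows that $\tilde G$ contains the open subgroup $\comm(G)$, so $\tilde G$ is itself open in $\acomm(G)$, locally isomorphic to $G$, non-discrete, and has trivial quasi-centre by Lemma~\ref{lem:QZopen}. Moreover, the normal closure of $\comm(G)$ in $\acomm(G)$ is open, hence closed, and by extremality of $\tilde G$ coincides with it.

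The main obstacle is to prove that $\tilde G$ is topologically simple. The plan is to repeat the dichotomy above inside $\tilde G$. Given a non-trivial closed normal subgroup $N \unlhd \tilde G$ and any $g \in \acomm(G)$, the conjugate $g \comm(G) g\inv \subseteq \tilde G$ is an open subgroup isomorphic to $G$, and $N \cap g \comm(G) g\inv$ is normal in $g \comm(G) g\inv$ because $N$ is normal in $\tilde G$. By simplicity this intersection is either trivial or all of $g \comm(G) g\inv$; the first alternative would make $N$ discrete in $\acomm(G)$ (since $g \comm(G) g\inv$ is open), and the same quasi-centre argument applied with $\tilde G$ in place of $\acomm(G)$ would force $N = 1$. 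Hence $g \comm(G) g\inv \subseteq N$ for every $g$, and since these conjugates generate $\tilde G$, we conclude $N = \tilde G$.

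Rigidity of $\tilde G$ then drops out of Proposition~\ref{pr:rigid}: local isomorphism gives $\acomm(\tilde G) = \acomm(G)$ and $\comm(\tilde G) = \tilde G$, and the latter is normal in $\acomm(\tilde G)$ by construction. For uniqueness in (i), any other rigid simple $\tilde G' \in \mS$ locally isomorphic to $G$ satisfies $\acomm(\tilde G') = \acomm(G)$ with $\comm(\tilde G')$ normal in $\acomm(G)$ by rigidity; since the monolith is the smallest non-trivial closed normal subgroup, $\comm(\tilde G') \supseteq \tilde G$. Applying the starting observation with $\tilde G'$ in place of $G$ reverses the inclusion, giving $\tilde G \cong \tilde G'$.

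Part (ii) is then immediate: if $G$ is Lie-reminiscent then $G \cong \tilde G$, which is rigid. For (iii), rigidity of $G$ combined with uniqueness in (i) gives $G \cong \tilde G$; and for any compactly generated $H \in \mS$ locally isomorphic to $G$, the starting observation applied to $H$ inside $\acomm(H) = \acomm(G)$ shows $\comm(H) \subseteq \tilde G \cong G$, exhibiting $H$ as an open subgroup of $G$.
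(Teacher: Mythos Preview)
Your proof follows the paper's approach closely, with a pleasant variant for the topological simplicity of $\tilde G$: rather than the paper's characteristic-subgroup manoeuvre (the intersection $M$ of all non-trivial closed normal subgroups of $\tilde G$ is open and characteristic in $\tilde G$, hence normal in $\acomm(G)$, hence contains $\tilde G$ by minimality), you first identify $\tilde G$ with the normal closure of $\comm(G)$ in $\acomm(G)$ and then show directly that any non-trivial closed $N \unlhd \tilde G$ contains every conjugate $g\,\comm(G)\,g^{-1}$. Both arguments are short and equivalent in strength.

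There is one genuine omission in the uniqueness step of~(i). Before you can ``apply the starting observation with $\tilde G'$ in place of $G$'' (which requires $\comm(\tilde G')$ to be topologically simple) and then conclude $\tilde G' \cong \comm(\tilde G') = \tilde G$, you need $\QZ(\tilde G') = 1$. This is not automatic for an arbitrary member of $\mS$: there exist non-compactly-generated groups in $\mS$ with dense quasi-centre, and your $\tilde G'$ is not assumed compactly generated, so Proposition~\ref{prop:QZ} does not apply. The paper addresses this explicitly: since $\tilde G'$ is locally isomorphic to $G$ and $\QZ(G)=1$, every compact open subgroup of $\tilde G'$ has trivial quasi-centre, so $\QZ(\tilde G')$ cannot be dense; by topological simplicity of $\tilde G'$ it is therefore trivial. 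With that one line added, your argument is complete.
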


It is important to keep in mind that the canonical rigid simple group $\tilde G$ need not be compactly generated, even if $G$ is so. This is notably illustrated by Theorem~\ref{thmi:L(G)} from the introduction. 

\begin{proof}[Proof of Proposition~\ref{prop:AdmissibleRigid}] 
\begin{compactenum}[(i)]
\item 
Let $\tilde G \leq \acomm(G)$ denote the intersection of all non-trivial closed normal subgroups of $\acomm(G)$.
Notice that any such normal subgroup is non-discrete because $\QZ(\acomm(G)) = 1$ by Lemma~\ref{lem:QZopen},
and hence meets the open subgroup $\comm(G)$ non-trivially.
Since $G \in \mS$, it follows that every non-trivial closed normal subgroup of $\acomm(G)$ contains $\comm(G)$ and is open;
in particular $\tilde G$ contains $\comm(G)$.

We next show that $\tilde G$ belongs to $\mS$ or, in other words, that it is topologically simple. So let $N$ be a closed normal subgroup of $\tilde G$. As before, Lemma~\ref{lem:QZopen} ensures that $N$ is non-discrete. Since $\tilde G$ contains the topologically simple group $\comm(G)$ as an open subgroup, we deduce that  the intersection $N \cap \comm(G)$ is non-trivial. But $\comm(G)$ being topologically simple, we infer that $\comm(G) \subseteq N$. In other words, every closed normal subgroup of $\tilde G$ contains $\comm(G)$;
hence the same is true for the intersection $M$ of all these closed normal subgroups. Thus $M$ is an open characteristic subgroup of $\tilde G$. In particular, it is a closed normal subgroup of $\acomm(G)$. It then follows from the definition that $M = \tilde G$,
whence $\tilde G$ is topologically simple as desired.

The fact that $\tilde G$ is rigid follows from Proposition~\ref{pr:rigid} since $\tilde G$ is normal in $\acomm(G) = \acomm(\tilde G)$ by construction. 

Let now $O \in \mS$ be any rigid group locally isomorphic to $G$. Since $O$ is topologically simple, its quasi-centre is either trivial or dense. Since $\QZ(G) =1$, every compact open subgroup of $G$ has trivial quasi-centre. The quasi-centre of $O$ can therefore not be dense. In particular $O$ embeds in $\acomm(O) \cong \acomm(G)$, see Proposition~\ref{prop:Germ:basic}. Let us identify $O$ with its image in $\acomm(G)$. Proposition~\ref{pr:rigid} guarantees that $O$ is open and normal; it therefore contains $\tilde G$. But $\tilde G$ is normal in $\acomm(G)$, and hence also in $O$. Since $O$ is simple, we infer that $O = \tilde G$, as desired.

    \item
	Let $\tilde G \in \mS$ be the rigid group provided by (i). Since $G$ and $\tilde G$ are locally isomorphic, we infer that if $G$ is  Lie-reminiscent, then it must be isomorphic to $\tilde G$. Hence $G$ is rigid, as desired.

\item
	Assume now that $G$ is rigid. Then $\comm(G)$ is normal in $\acomm(G)$ and any compactly generated group $H \in \mS$ locally
	isomorphic to $G$ embeds as an open subgroup in $\acomm(G)$
	via the injection $\comm \colon H \to \acomm(H) = \acomm(G)$.
	Therefore the open subgroups $\comm(H)$ and $\comm(G)$ of $\acomm(G)$ meet
	non-trivially, and since $\comm(G)$ is normal while $H$ is
	topologically simple, we obtain $\comm(H) \subseteq \comm(G)$ as desired.
    \qedhere
\end{compactenum}
\end{proof}

\begin{remark}
It is interesting to point out that the above proof implies moreover that, if $G$ is \emph{abstractly} simple, then so is $\tilde G$. 
\end{remark}

\begin{remark}
The group $\tilde G$ is called the \textbf{open normal core} of $\acomm(G)$ in \cite{BarneaErshovWeigel}, and it is denoted by $\mathrm{Onc}(\Comm(U)_S)$, where $U$ is a compact open subgroup of $G$. It is defined in \emph{loc.~cit.} as the intersection of all open normal subgroups of $\acomm(G) = \Comm(U)$; the above proposition shows that these two definitions coincide in our setting.
\end{remark}

\subsection{A local property of compactly generated rigid simple groups}

As mentioned above, the rigid group $\tilde G$ appearing in Proposition~\ref{prop:AdmissibleRigid}(i) need not be compactly generated  in general, even if $G$ is so. We will study this question in detail for Burger--Mozes universal groups in order to prove Theorem~\ref{thmi:L(G)} from the introduction. In that the study, the following general fact will be helpful; it is implicitly contained in Section~8 from  \cite{BarneaErshovWeigel}. 

\begin{prop}\label{prop:Aut}
Let $G \in \mS$ be compactly generated and rigid. Then for any compact open subgroup $U  \leq  G$ and any characteristic open subgroup $V  \leq  U$, the automorphism group $\Aut(U)$ embeds as a  subgroup of $\Aut(V)$ whose index is at most countable.
\end{prop}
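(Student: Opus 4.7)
The plan is to use the rigidity hypothesis to realise both $\Aut(U)$ and $\Aut(V)$ as stabilisers of compact open subgroups inside the single ambient group $\Aut(G) \cong \acomm(G)$, and then bound the index by counting compact open subgroups of $G$ that contain $V$.

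Since $V$ is characteristic in $U$, restriction yields a natural homomorphism $\rho \colon \Aut(U) \to \Aut(V)$. I would first show injectivity as follows: if $\alpha \in \ker\rho$ and $u \in U$, then the open subgroup $W := V \cap u\inv V u$ of $G$ is centralised by $u\inv \alpha(u)$ (equate $\alpha(uwu\inv)$ with $uwu\inv$ for $w \in W$), so $u\inv\alpha(u) \in \QZ(G)$, which is trivial by Proposition~\ref{prop:QZ} because $G$ is compactly generated and simple. By the same $\QZ$-argument, the natural map $\Aut(U) \to \acomm(U) = \acomm(G)$ is injective; combining this with rigidity (Proposition~\ref{pr:rigid}(d)) identifies $\Aut(U)$ with the stabiliser in $\Aut(G)$ of $U$, because any $\alpha \in \Aut(U)$ extends uniquely to some $\tilde\alpha \in \Aut(G)$ and this extension satisfies $\tilde\alpha(U) = U$. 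The same construction identifies $\Aut(V)$ with $\Stab_{\Aut(G)}(V)$, and since $V$ is characteristic in $U$ one has $\Stab_{\Aut(G)}(U) \leq \Stab_{\Aut(G)}(V)$, so that $\rho$ is recovered as the natural inclusion between these two stabilisers inside $\Aut(G)$.

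With these identifications in place, the coset space $\Aut(V)/\Aut(U)$ injects into the orbit $\{\phi(U) : \phi \in \Stab_{\Aut(G)}(V)\}$ via $\phi\Aut(U) \mapsto \phi(U)$, and every element of this orbit is a compact open subgroup of $G$ containing $V$. It therefore suffices to show that the set of compact open subgroups $K \leq G$ with $V \leq K$ is countable. Any such $K$ is a finite union of cosets of $V$ (since $V$ is open in the compact group $K$), and the coset space $G/V$ is countable because $G$ is metrisable by Lemma~\ref{lem:metr} and compactly generated, hence second countable; so there are only countably many finite subsets of $G/V$, and the required bound follows. The main conceptual step is the rigidity-based identification in the second paragraph; once that is in place the index bound reduces to a straightforward counting in a second countable group, and the only point requiring care is the appeal to $\QZ(G) = 1$ to promote germs to genuine automorphisms.
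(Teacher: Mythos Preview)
Your proof is correct and follows essentially the same strategy as the paper: embed $\Aut(U)$ and $\Aut(V)$ into $\Aut(G)\cong\acomm(G)$ via rigidity, identify them as the stabilisers of $U$ and $V$ there, and then bound the index by a countability argument. The only genuine difference is in the final count: the paper bounds $[\acomm(G):\Aut(U)]$ by the number of $\acomm(G)$-conjugates of $U$, reduces these to $G$-conjugates using normality of $\comm(G)$, and then uses that $\norma_G(U)$ has countable index; you instead bound $[\Aut(V):\Aut(U)]$ directly by the number of compact open subgroups of $G$ containing $V$, each being a finite union of $V$-cosets. Both counts work and rest on the same underlying fact that a compactly generated locally compact group is $\sigma$-compact, so every open subgroup has countable index.

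One small correction: your appeal to Lemma~\ref{lem:metr} for metrisability is misplaced, since that lemma assumes a proper action on a tree, which is not part of the hypotheses here. You do not need metrisability at all: $G$ is $\sigma$-compact because it is compactly generated, and any open subgroup of a $\sigma$-compact group has countable index, so $G/V$ is countable without further assumptions.
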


\begin{proof}
Since $V$ is characteristic in $U$, there is a map $\Aut(U) \to \Aut(V)$. Since $G$ has trivial quasi-centre (see Proposition~\ref{prop:QZ}), so does $U$ and hence the above map is injective. All we need to show is that its image has countable index. 

We shall identify the groups $G$, $\Aut(U)$ and $\Aut(V)$ with their canonical images in $\mathscr L(G)$.  Since $G$ is compactly generated, every open subgroup has countable index. In particular the normaliser of $U$ has countable index in $G$, and hence $U$ has countably many conjugates in $G$. 

Since $G$ is normal in $\mathscr L(G)$ (see Proposition~\ref{pr:rigid}), every conjugate of $U$ in $\mathscr L(G)$ is contained in $G$. Lemma~\ref{lem:QZopen} guarantees that the normaliser of $U$ in $\mathscr L(G)$ is nothing but $\Aut(U)$. Therefore,  we infer that  $\Aut(U)$ has countable index in $\mathscr L(G)$. The latter index being clearly an upper bound for the index of $\Aut(U)$ in $\Aut(V)$, the desired result follows.
\end{proof}

\subsection{Non-rigidity of tree-automorphism groups satisfying Tits' independence property}

\begin{theorem}\label{thm:NonRigid}
Let $T$ be a locally finite tree and $G \leq \Aut(T)$ be a non-trivial compactly generated simple closed subgroup satisfying Tits' independence property. Then $G$ is not rigid, and in particular it is not Lie-reminis\-cent.
\end{theorem}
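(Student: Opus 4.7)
The plan is to assume that $G$ is rigid and derive a contradiction from Proposition~\ref{prop:Aut}: in the rigid case, for every compact open subgroup $U\leq G$ and every characteristic open subgroup $V\leq U$, the group $\Aut(U)$ would embed with at most countable index in $\Aut(V)$. I will exhibit such a pair $V\leq U$ for which the latter index is uncountable, drawing on the rich direct product decomposition of pointwise stabilisers furnished by Tits' independence property.

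First I would reduce to the case where $G$ acts minimally, faithfully and cocompactly on $T$: Lemma~\ref{lem:basic}(ii) and Remark~\ref{re:invT}(i) provide a minimal $G$-invariant subtree on which $G$ acts faithfully (by non-triviality and simplicity) and cocompactly (by Lemma~\ref{le:ccpt} and compact generation). Since the isometry group of a single vertex, of a single edge, or of a bi-infinite line admits no non-discrete topologically simple subgroup, the reduced tree must contain a vertex $v$ of valence at least three.

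Now fix such a vertex $v$, set $U := G_v$, and, for each $n\geq 1$, let $V_n := G_{(B_n(v))}$ denote the pointwise stabiliser of the closed ball of radius $n$ around $v$; each $V_n$ is a normal open subgroup of $U$. Iterated application of Tits' independence property to $B_n(v)$ yields the direct product decomposition
\[
    V_n \;=\; \prod_{u \in S_n(v)} G_{(T_u)},
\]
where $S_n(v)$ denotes the sphere of radius $n$ around $v$ and $T_u$ is the half-subtree rooted at $u$ pointing away from $v$. Since $T$ is thick, $|S_n(v)|$ grows at least exponentially with $n$, while cocompactness of the $G$-action implies that the factors $G_{(T_u)}$ fall into only finitely many $G$-isomorphism classes. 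As a consequence, $\Aut(V_n)$ contains ``branch-permutation'' subgroups of unboundedly large cardinality, given by arbitrary permutations within each isomorphism class together with arbitrary product automorphisms of the individual factors.

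The argument would then conclude in two steps. First, one checks that the filtration $\{V_n\}_{n\geq 1}$ is characteristic in $U$, so that Proposition~\ref{prop:Aut} does apply with $V = V_n$; this should follow from an intrinsic reconstruction of $V_n$ inside $U$ exploiting the iterated wreath product structure of Proposition~\ref{pr:wr} (with a parallel argument in the general Tits setting). Second, for $n$ sufficiently large, one compares the branch-permutation subgroup of $\Aut(V_n)$ with the image of the inclusion $\Aut(U) \hookrightarrow \Aut(V_n)$ and verifies that they yield uncountably many distinct cosets, contradicting the countable-index conclusion of Proposition~\ref{prop:Aut}. The main obstacle lies in this second step: one must rule out that arbitrary permutations of pairwise isomorphic branches at depth $n$ are all realised by automorphisms of $U$, which ultimately rests on the fact that automorphisms of $U$ must respect the ``rooted tree'' structure of $T$ at $v$, whereas branch-permutations at a fixed depth encode purely local combinatorial data that is invisible to this global constraint. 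The failure of Lie-reminiscence is then a formal consequence of Proposition~\ref{prop:AdmissibleRigid}(ii).
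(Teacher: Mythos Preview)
Your strategy shares its starting point with the paper --- the product decomposition of ball-stabilisers coming from Tits' independence --- but the route via Proposition~\ref{prop:Aut} has a genuine gap at the second step. For each fixed $n$ the sphere $S_n(v)$ is \emph{finite}, so your branch-permutation subgroup of $\Aut(V_n)$ is finite; it cannot produce \emph{uncountably} many cosets of the image of $\Aut(U)$, and an index that is finite for each $n$ but unbounded in $n$ is perfectly compatible with the countable-index conclusion of Proposition~\ref{prop:Aut}. To obtain uncountability you would need the outer automorphism groups of the individual branch factors to be uncountable and not realised by $\Aut(U)$ --- this is exactly the mechanism behind Theorem~\ref{thm:CommBranch}(i)$\Rightarrow$(v) for the special groups $W(D)$, but nothing of the sort is available for a general $G$ with Tits' independence. (The characteristicity of $V_n$ in $G_v$ is likewise only established in the paper for the Burger--Mozes groups, via Lemma~\ref{lem:Aut(W)}; your appeal to Proposition~\ref{pr:wr} does not extend to the general setting.)

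The paper's proof avoids both obstacles by not invoking Proposition~\ref{prop:Aut} at all. It works directly with $U_n$ and its normaliser $N_n=\norma_G(U_n)$ in $G$ (so no characteristicity is needed), and the contradiction comes from structure rather than cardinality: under rigidity, a certain subgroup $A_n\leq\Aut(G)$ surjects onto the full symmetric group on each isomorphism class $\Omega_n^i$ of factors of $U_n$; since $N_n\unlhd A_n$, the Baer--Schreier--Ulam classification of normal subgroups of $\Sym(\Omega)$ forces $N_n$ to be compact and of uniformly bounded index over $G_v$. One then deduces that the induced permutation representation of $G_v$ on some $\Omega_n^i$ must contain $\Alt(\Omega_n^i)$ for arbitrarily large $n$, which is incompatible with the imprimitivity (blocks of size at most $d-1$, number of blocks tending to infinity) of the natural $G_v$-action on spheres around $v$.
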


\begin{proof}
Upon replacing $T$ by a minimal $G$-invariant subtree, there is no loss of generality in assuming the $G$-action to be minimal. In particular it is cocompact by Lemma~\ref{le:ccpt}.

We need to consider a certainly family of sub-trees of $T$, associated to each pair $(v, A)$ consisting of a vertex $v \in V(T)$ and a set $A \subseteq E(v)$ of edges containing $v$. To such a pair $(v, A)$, we associate the subtree 
$$h(v, A) \subseteq T$$
whose vertex set is 
$$\{v\} \cup \{w \in V(T) \; | \; [v, w] \cap A \neq \varnothing\}.$$
In other words, $h(v, A)$ is the subtree containing $v$ whose vertices different from $v$ are separated from $v$ by an edge in $A$.
In particular, $h(v, E(v)) = T$ and $h(v, \varnothing) = \{v\}$. Similarly, if $A =\{e\}$ consists of a single edge, then $h(v, \{e\})$ is the union of $e$ with  the half-tree determined by $e$ and not containing $v$. 

Since $G$ acts  cocompactly on $T$, it has finitely many orbits of vertices and edges and, hence, finitely many orbits of subtrees of the form $h(v, A)$ as above.
We denote these orbits by $\Omega^1, \dots, \Omega^k$.
Given $h \in \Omega^i$, we denote by $G_{(h)}$ the pointwise stabiliser of the subtree $h$. 

Fix a base vertex $v \in T$.
Given $n>0$, we denote by $U_n \leq G$ the pointwise stabiliser of the ball of radius $n$ around $v$.
Let $T_n$ be the fixtree of $U_n$, and let $T_n^{[0]}$ be the \emph{thick part} of $T_n$, i.e.\@ the subset  consisting of those vertices of $T_n$ all of whose neighbours in $T$ also belong to $T_n$. In particular, the $(n-1)$\nobreakdash-ball around $v$ is contained in $T_n^{[0]}$. Notice that $T_n^{[0]}$ need not be connected \emph{a priori}. Moreover $T_n \setminus T_n^{[0]}$ is necessarily non-empty, since otherwise we would have $T_n = T_n^{[0]} = T$ and $G$ would be discrete, hence trivial. 

In fact, we claim more precisely that $T_n$ coincides with the convex hull of $T_n \setminus T_n^{[0]} $. Indeed, consider an edge $e \in E(T_n)$, and let $h^+$ and $h^-$ be the two half-trees of $T$ determined by $e$. Then we claim that $h^+$ is not entirely contained in $T_n$.
Indeed, suppose that $h^+$ were contained in $T_n$; then $U_n$ would fix $h^+$ pointwise.
But then the pointwise stabilizer $G_{(h^+)}$ would be open; this would force the pointwise stabilizer $G_{(h^-)}$ to be finite, since $G_{(e)} \cong G_{(h^+)} \times G_{(h^-)}$ by Tits' independence property. Since $G$ has trivial quasi-centre by Proposition~\ref{prop:QZ}, we deduce that $G_{(h^-)}$ is trivial.
It is easy to see that this would imply that $G$ itself is trivial, which is absurd. Thus $T_n$ does not contain $h^+$. This is equivalent to saying that $h^+$ contains some vertex $v^+ \in T_n \setminus T_n^{[0]}$. Similarly, one shows that $h^-$ contains some vertex $v^- \in T_n \setminus T_n^{[0]}$. Thus $e \subset [v^+, v^-]$. Since $e$ was an arbitrary edge of $T_n$, this confirms the claim that  $T_n$ is the convex hull of $T_n \setminus T_n^{[0]}$.

\medskip 
For each vertex $v \in T_n \setminus T_n^{[0]}$, the intersection $A = E(v) \cap E(T_n)$ is a proper subset of $E(v)$. In particular $h\big(v, E(v) \cap E(T_n)\big)$ is a proper subtree of $T$ containing $T_n$. We define  
$$H_n = \bigg\{ h\big(v, E(v) \cap E(T_n)\big) \; | \; v \in T_n \setminus T_n^{[0]}\bigg\}.$$
as the collection of all subtrees of that form. Moreover, for each $i$ and each $n$, we set 
$\Omega_n^i = \Omega^i \cap H_n$. Notice that $\Omega_n^i$ can be finite or infinite. 

Now Tits' independence property implies that 
\[ U_n \cong \prod_{h \in H_n} G_{(h)} = \prod_{i=1}^k \Bigl(  \prod_{h \in \Omega_n^i} G_{(h)} \Bigr) . \]
Set $N_n = \norma_G(U_n)$; then $U_n \leq G_v \leq N_n$.
Moreover, let
\begin{multline*}
	A_n = \{ \varphi \in \Aut(G) \mid \varphi(U_n) = U_n, \text{ and for all } i \in \{ 1,\dots,k \} \\[-.2ex]
	\text{ and all } h \in \Omega_n^i, \text{ we have } \varphi(G_{(h)}) = G_{(h')} \text{ for some } h' \in \Omega_n^i \} .
\end{multline*}

Since $G$ is simple it injects in $\Aut(G)$ and it will be convenient to abuse notation and identify $G$ with its image in $\Aut(G)$.
Modulo this convention, we notice that $N_n$ is contained in $A_n$: Indeed, the group $N_n$ acts on the fixtree $T_n$. Therefore, it preserves $T_n^{[0]}$ and, hence, it permutes the elements of $\Omega_n^i$ for all $i \in \{1, \dots, k\}$. Thus $N_n \leq A_n$. Since on the other hand, the definition of $A_n$ implies that $A_n \cap G$ normalizes $U_n$, we conclude that $N_n = A_n \cap G$, which implies in particular that $N_n$ is a normal subgroup of $A_n$. 

By definition  we have a canonical homomorphism
\[ \pi_n^i  : A_n \to   \Sym(\Omega_n^i) \]
for each $i = 1, \dots, k$. The product of these defines a homomorphism
\[ \pi_n  : A_n \to \prod_{i=1}^k  \Sym(\Omega_n^i). \]
Observe that $\ker(\pi_n) \cap N_n $ acts trivially on the set $T_n \setminus T_n^{[0]}$. Since $T_n$ is the convex hull of $T_n \setminus T_n^{[0]}$ by the above, we deduce that $\ker(\pi_n) \cap N_n$ acts trivially on $T_n$, and is thus contained in $U_n$. This shows that  $\ker(\pi_n) \cap N_n = U_n $ for all $n$. Moreover, since $U_n \neq G_v$ for all sufficiently large $n$,
this implies that $\pi_n(N_n)$ is non-trivial for all sufficiently large $n$.

\smallskip

Suppose now for a contradiction that $G$ is rigid. Then every abstract commensurator of $U_n$ extends to an automorphism of $G$. In particular any automorphism of $U_n$  permuting isomorphic factors extends to an element of $A_n$. This implies that the above map $\pi_n^i$ is surjective for all $i$ and $n$. 

Let $i$ and $n$ be such that $\Omega_n^i$ is infinite (and hence countably infinite), and assume that $\pi_n^i(N_n) \neq 1$. Recall that $G$ is second countable since it is metrisable and compactly generated. In particular $N_n$ is second countable, and the discrete image $\pi^i_n(N_n)$ is therefore at most countable. In particular it is a countable normal subgroup of the uncountable group $\Sym(\Omega_n^i)$.
By the Baer--Schreier--Ulam theorem \cite{Baer,SchreierUlam},
$\Sym(\Omega_n^i)$ has only two proper non-trivial normal subgroups,
namely the subgroup of all finitary permutations, and the subgroup of alternating finitary permutations;
both normal subgroups are locally finite and infinite. We deduce that $\pi_n^i(N_n)$ is locally finite and, hence, that $N_n$ is \textbf{locally elliptic},
\emph{i.e.}\@ every compact subset is contained in a compact subgroup. Since $N_n $ is open in $G$, Lemma~\ref{lem:basic}(iv) guarantees that this can only be true if $N_n$ is compact;
but then $\pi_n^i(N_n)$ is finite, so $\pi_n^i(N_n)$ is trivial after all.

We infer that $\pi_n^i(N_n) = \{1\}$ for all $i$ and $n$ such that $\Omega_n^i$ is infinite;
in particular $\pi_n(N_n) $ is finite for all $n$. 
%
Similarly, if the group $\pi_n^i(N_n)$ is non-trivial and $|\Omega_n^i | \geq 5$, then it coincides with either $\Sym(\Omega_n^i)$ or $\Alt(\Omega_n^i)$.

\medskip
Our next goal is to show that $\pi_n^i(G_v)$ is either $\Sym(\Omega_n^i)$ or $\Alt(\Omega_n^i)$ for some fixed $i$ and infinitely many values of $n$. To this end, we first notice that $N_n$ is compact, since $\ker(\pi_n) \cap N_n = U_n$ is compact and since $\pi_n(N_n) $ is finite. Thus for each $n$ there is some vertex $v_n$ such that $N_n \leq G_{v_n}$, and hence $G_v \leq N_n \leq G_{v_n}$. Since $G$ is unimodular (because it is simple) and acts cocompactly on $T$, we deduce that 
the values $[G_{v_n} : G_{v}]$ are bounded, and hence
\[ s = \sup_n [N_n : G_v] < \infty. \]
Recall that, for $m \geq 5$, the only non-trivial subgroup of $\Sym(m)$ of index $< m$  is $\Alt(m)$, and that $\Alt(m)$ has no non-trivial subgroup of index~$<m$.  It follows that for sufficiently large $n$, if $|\Omega_n^i| > \max\{5, s\}$ and $\pi_n^i(N_n)$ is non-trivial, then
$\pi_n^i(G_v)$ also coincides with either $\Sym(\Omega_n^i)$ or $\Alt(\Omega_n^i)$.

On the other hand, we have 
\[ |\pi_n(G_v)| = |G_v / U_n| \leq  |\pi_n(N_n)| , \]
which shows that $ |\pi_n(N_n)|$ tends to infinity with $n$. In particular, the subset $I \subset \{1, \dots, k\}$ consisting of those $i$ such that $| \pi^i_n(N_n) | > \min\{60, s!\}$ for infinitely many values of $n$, is non-empty. In view of the conclusion of the preceding paragraph, we deduce that  if $i \in I$, then $\pi^i_n(G_v) = \Sym(\Omega_n^i)$ or $\Alt(\Omega_n^i)$ for infinitely many values of $n$, as desired. 

\medskip
The final contradiction is now obtained as follows. Since $G_v$ preserves each sphere around $v$, it follows that for each $i$ and $n$ with $\pi^i_n(G_v) = \Sym(\Omega_n^i)$ or $\Alt(\Omega_n^i)$, there is some $m \geq n$ such that the permutation representation $\pi_n^i : G_v \to \Sym(\Omega_n^i)$ is a sub-representation of the permutation action of $G_v$ on the $m$-sphere around $v$. The latter action is imprimitive, with minimal blocks of imprimitivity of size at most $d-1$, where $d$ is the maximal valence of a vertex of $T$;
in particular, the number of blocks of the sphere of radius $m$ tends to infinity with $m$.
On the other hand, the number of sets $\Omega_n^i$ is bounded by $k$, for each $n$.
Therefore, the pigeonhole principle implies that there is some $i \in I$ for which $\pi_n^i(G_v)$ is an imprimitive subgroup of $\Sym(\Omega_n^i)$ for all $n$, whose order tends to infinity with $n$.
This finally contradicts the fact that for this $i$ and some $n$ large enough, we have $\pi^i_n(G_v) =  \Sym(\Omega_n^i)$ or $\Alt(\Omega_n^i)$.
\end{proof}

\section{Commensurators of self-replicating wreath branch groups}\label{sec:Comm}

\subsection{The vocabulary of branch groups and rooted trees}

We recall basic notions from the theory of branch groups.

\begin{defn}
	A tree $\mathcal T$ is called a {\bf regular rooted tree} of degree $d$ (with \textbf{root} $r$), if $r$ is a vertex
	of valency $d$, and every other vertex of the tree $\mathcal{T}$ has valency $d+1$.
	In particular, every automorphism of $\mathcal{T}$ fixes $r$, and the group $\Aut(\mathcal T)$ is a compact, and hence profinite, group. 
	A vertex $v \in V(\mathcal{T})$ is called at {\bf level $n$} if it has (graph-theoretical) distance $n$ from the root $r$. 
Given a vertex $v$ of level $n$, we denote by $\mathcal T_v$ the subtree of $\mathcal T$ consisting of those vertices of level~$\geq n$ which are separated from the root by~$v$. 

Given a group $W$ acting on a rooted tree $\mathcal T$,
	we denote by $\st_W(n)$ the pointwise stabiliser of the sphere of radius $n$ around the root.
	Given a vertex $v$ of $\mathcal T$ of level $n$, the \textbf{restricted stabiliser} of $v$ in $W$,
	denoted by $\rist_W(v)$,
	is defined as the subgroup of $\st_W(n)$ acting trivially on $\mathcal T_w$ for every vertex $w$ 	of level $n$ different from $v$. 
	We say that $W$ is {\bf level-transitive} if it acts transitively on the set of vertices at level $n$, for each $n$.

(All these definitions can be extended to more general rooted trees, and are particularly useful for spherically
	homogeneous trees, but we will mostly need the rooted regular trees in what follows.)
\end{defn}
\begin{defn}
	A profinite group $W$ is called a {\bf branch group}, if there is a rooted tree $\mathcal{T}$ 
	and an embedding of $W$ into $\Aut(\mathcal{T})$ as a closed subgroup, such that $W$ is level-transitive on $\mathcal{T}$, and such that for each $n \geq 1$ the subgroup generated by the restricted stabilisers of all vertices at level~$n$ is of finite index (and hence open) in $W$.  We say that $W$ is \textbf{saturated}, if $\st_{W}(n)$ is a characteristic subgroup of $W$ for all $n \geq 0$.

\end{defn}
\begin{remark}
\begin{compactenum}[(i)]
\item If $W$ is a branch group, then it has trivial quasi-centre (this follows \emph{e.g.} from \cite[Theorem~2(c)]{Grigorchuk}).

	    \item
		Some authors use the terminology {\em rigid stabiliser} instead of restricted stabiliser,
		and they denote this by $\mathrm{rist}_W(v)$ instead of $\rist_W(v)$.
	    \item
		It is possible to define branch groups without explicitly referring to the associated rooted tree;
		see, for example, \cite[Section~5]{Grigorchuk}.
\end{compactenum}
\end{remark}

\subsection{Self-replicating wreath branch groups and their auto\-mor\-phisms}\label{sec:branch}

In the rest of the paper, we shall focus on a special class of profinite branch groups which we now introduce. 

Let $d >1$ and $D  \leq  \Sym(d)$ be a finite transitive permutation group. 
Set $D_1 = D$ and $D_{n+1} = D_1 \wr D_n$ for all $n \geq 1$, where the wreath product is taken with its imprimitive wreath product action
(which is the natural action of $D_n$ on the $d^n$ leaves of the corresponding finite rooted tree).
The family $(D_n)_{n >0}$ naturally constitutes a projective system of finite groups. We denote by $W(D) = \varprojlim D_n$ the corresponding profinite group and call it the \textbf{wreath branch group} determined by $D$; see also Proposition~\ref{pr:wr} above.

We shall view $W(D)$ as a closed subgroup of the automorphism group of the regular rooted tree $\mathcal T$ of degree $d$ on which $W(D)$ acts continuously and faithfully. Since $D$ is transitive, it follows that $W(D)$ acts on $\mathcal T$ as a branch group. Notice that for all $v \in V(\mathcal T)$, there is an isomorphism $\mathcal T_v \to \mathcal T$ which conjugates $\rist_{W(D)}(v) $ onto $ W(D)$; in particular $\rist_{W(D)}(v) \cong W(D)$. We shall refer to this property by saying that $W(D)$ is \textbf{self-replicating}.

(Although this is not relevant to our purposes, we point out that, up to isomorphism, the group $W(D)$ is the unique closed branch subgroup $W \leq \Aut(\mathcal T)$ such that $W / \st_W(1) \cong D$ and that $W$ is self-replicating.) 

\begin{lemma}\label{lem:JustInfinite}
$W(D)$ is just-infinite if and only if $D$ is perfect.
\end{lemma}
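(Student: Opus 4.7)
The plan is to compute the profinite abelianization $W^{\mathrm{ab}}$ of $W = W(D)$ using the self-replicating structure, and then to invoke a standard fact about normal subgroups of branch groups. Because $W$ is self-replicating with $W/\st_W(1) \cong D$ and $\st_W(1) \cong W^d$, we have the topological isomorphism $W \cong W \wr D \cong W^d \rtimes D$, where $D$ acts by permutation on the $d$ factors.

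First I would verify the elementary fact that for any wreath product $A \wr B$ in which $B$ acts transitively on the index set of the base, the abelianization splits as $(A \wr B)^{\mathrm{ab}} \cong A^{\mathrm{ab}} \times B^{\mathrm{ab}}$: commutators of the form $[(a_1, \dots, a_d), b]$ identify all coordinates under the transitive $B$-action, so the base contributes only a single copy of $A^{\mathrm{ab}}$, and the resulting semidirect product becomes direct. Applying this layer by layer to $D_{n+1} = D \wr D_n$ yields $D_n^{\mathrm{ab}} \cong (D^{\mathrm{ab}})^n$ by induction, and passing to the projective limit gives $W^{\mathrm{ab}} = \varprojlim D_n^{\mathrm{ab}}$.

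If $D$ is not perfect, then $D^{\mathrm{ab}} \neq 1$ forces $W^{\mathrm{ab}}$ to be an infinite profinite group. Consequently $\overline{[W, W]}$ is a closed normal subgroup of $W$ of infinite index, and it is non-trivial because $W$ is non-abelian for $d \geq 2$ (the transitive $D$-action on $W^d$ is non-trivial). Hence $W$ is not just-infinite. Conversely, if $D$ is perfect, then each $D_n$ is perfect by the same formula, and since the projections $W \to D_n$ are surjective, $[W,W]$ is dense in $W$, i.e., $W$ is topologically perfect. By self-replicability each $\rist_W(v) \cong W$ is topologically perfect, hence so is $\rist_W(n) = \prod_v \rist_W(v)$; moreover self-replicability gives $\rist_W(n) = \st_W(n)$.

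To conclude in the case $D$ perfect, I would invoke the standard theorem that any non-trivial closed normal subgroup $N$ of a branch group contains $\overline{[\rist_W(n), \rist_W(n)]}$ for some $n$. Since this last group equals $\rist_W(n) = \st_W(n)$, which has finite index $|D_n|$ in $W$, it follows that $N$ has finite index, so $W$ is just-infinite. The main subtlety is verifying that the wreath-product abelianization formula passes cleanly to the profinite limit; given the clean inductive tower of the $D_n^{\mathrm{ab}}$ this is routine, so the real content lies in combining the abelianization computation with the branch-group normal-subgroup theorem.
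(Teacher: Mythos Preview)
Your argument is correct. The paper's own proof is a one-line citation: ``Immediate from \cite[Theorem~4]{Grigorchuk}.'' That theorem of Grigorchuk gives a criterion for a profinite branch group to be just-infinite in terms of the commutator subgroups of the rigid stabilisers having finite index, and your proof is precisely an unpacking of that criterion in the self-replicating case: you verify directly that when $D$ is perfect each $\rist_W(n)$ is topologically perfect (so equals its own closed commutator subgroup), and you invoke the underlying normal-subgroup lemma (every non-trivial closed normal subgroup contains some $\overline{[\rist_W(n),\rist_W(n)]}$), which is the content behind Grigorchuk's theorem. For the converse you compute $W^{\mathrm{ab}} \cong \prod_{n\geq 1} D^{\mathrm{ab}}$ from the wreath structure, which is again exactly what makes the cited criterion fail when $D$ is imperfect. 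So the route is the same; you have simply made explicit what the paper delegates to the reference. One minor remark: your claim that the profinite abelianization commutes with the inverse limit deserves the one-line justification that the transition maps $D_{n+1}\to D_n$ are surjective, so $\overline{[W,W]}=\varprojlim [D_n,D_n]$; you allude to this but it is the only place a reader might pause.
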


\begin{proof}
Immediate from  \cite[Theorem~4]{Grigorchuk}.
\end{proof}

Although we shall not need it in this paper, we mention the following striking characterisation, due to M.~Burger and Sh.~Mozes (see \cite{Moz}).

\begin{theorem}[Burger--Mozes]
The profinite group $W(D)$ is topologically finitely generated if and only if $D \leq \Sym(d)$ is perfect and fixed-point-free.
\end{theorem}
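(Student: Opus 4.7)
\emph{Plan.} The natural first step is to translate the topological-generation question for $W(D) = \varprojlim D_n$ into a uniform bound on the minimal number of generators of the finite quotients. Since every open normal subgroup of $W(D)$ contains some level stabiliser $\st_{W(D)}(n)$, with quotient $D_n$, the profinite group $W(D)$ is topologically finitely generated if and only if $\sup_n d(D_n) < \infty$, where $d(G)$ denotes the minimal cardinality of a generating set of the finite group $G$. The theorem thus splits into two implications between purely finite-group-theoretic statements.

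For the necessity of perfection, I would invoke the standard computation of the abelianisation of a wreath product: for $H$ acting transitively on $n$ points, $(G \wr H)^{\mathrm{ab}} \cong G^{\mathrm{ab}} \times H^{\mathrm{ab}}$. A trivial induction then gives $D_n^{\mathrm{ab}} \cong (D^{\mathrm{ab}})^n$, so $d(D_n) \geq n \cdot d(D^{\mathrm{ab}})$, which is unbounded as soon as $D$ is not perfect. The necessity of fixed-point-freeness is more delicate: assuming $D$ has a non-trivial point stabiliser $D_x$, one needs to exhibit quotients of $D_n$ whose generator rank grows with $n$. Here the approach is to track the propagation of fixed leaves through the iterated wreath product structure; elements of $D_x$ inserted at different levels generate subsections of $D_n$ whose orbits under conjugation remain small enough that an abelianisation-type obstruction analogous to the non-perfect case reappears at a deeper level.

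For sufficiency, assuming $D$ is perfect and regular (transitivity together with the fixed-point-free condition forcing $|D| = d$), the goal is to produce a bounded topological generating set for $W(D)$. I would try self-similar lifts of a generating set $s_1, \dots, s_k$ of $D$: elements $\tilde s_1, \dots, \tilde s_k \in W(D)$ whose first-level action equals that of $s_1, \dots, s_k$ and whose restrictions to subtrees are defined recursively by the same recipe. The heart of the argument is to show by induction on $n$ that such lifts generate $D_n$; perfection of $D$ eliminates abelian obstructions at each inductive step, and the regularity of the $D$-action ensures that commutators of the $\tilde s_i$ produce enough independent elements at each vertex to recover every coordinate in the base subgroup $D^{d^{n-1}}$ of $D_n$.

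The main obstacle is the sufficiency direction. While the perfection obstruction is formal, bounded generation of iterated wreath products is not; it requires an explicit choice of self-similar lifts together with a delicate commutator computation. The fixed-point-free hypothesis enters precisely at the inductive step, where it prevents the appearance of large trivially-acting sections which would otherwise obstruct generation, and it also underpins the necessity argument by providing the non-trivial stabiliser whose propagation breaks bounded generation. Arguments of this flavour appear in the classical work of Wiegold and Lucchini on generation of direct powers and in the more recent literature on self-similar and automata groups, and the Burger--Mozes proof can be expected to follow a similar line.
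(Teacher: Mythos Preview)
The paper does not prove this theorem. It is stated with attribution to Burger--Mozes and a bare citation to \cite{Moz}; no argument, sketch, or indication of method appears in the paper. There is therefore nothing to compare your proposal against.

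A few comments on the proposal itself. The reduction to a uniform bound on $d(D_n)$ is the correct starting point, and your abelianisation computation $(D_n)^{\mathrm{ab}} \cong (D^{\mathrm{ab}})^n$ is right and yields the necessity of perfection cleanly. The sufficiency sketch via self-similar lifts is in the right spirit, though what you have written is only a plan, not an argument: the inductive step, where one must show that the lifts together with commutators recover the full base group $D^{d^{n-1}}$, is where the actual work lies.

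One caution. You interpret ``fixed-point-free'' as semiregular (hence regular, given transitivity), and build both your necessity and sufficiency sketches on that reading. You should verify this against the original source \cite{Moz} before investing further effort: results of Bhattacharjee on iterated wreath products of non-abelian finite simple groups show that $W(A_5)$, with $A_5$ in its natural action on five points, is topologically $2$-generated, yet $A_5$ is perfect and certainly not regular on five points. If your reading were correct, this would contradict the necessity direction. Either the intended meaning of ``fixed-point-free'' is different (for instance, concerning point stabilisers or a non-transitive setting), or the statement in \cite{Moz} is formulated under different hypotheses than you assume. Your necessity argument for the fixed-point-free condition is in any case too vague to evaluate; you would need to name an explicit sequence of quotients of $D_n$ whose rank grows.
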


\medskip
Automorphisms of wreath branch groups have been studied in~\cite{Lavreniuk99}. We denote by $W^k$ the direct product of $k$ copies of $W$.

\begin{lemma}\label{lem:Aut(W)}
The group $W = W(D)$ is saturated and for any $k>0$, we have $\Aut(W^k) = \Aut(W) \wr \Sym(k)$. Moreover we have 
\[ \Aut(W) = \norma_{\Aut(\mathcal T)}(W) \]
and
\[ \Out(W) \cong \prod_{n>0} \norma_{\Sym(d)}(D)/D. \]
In fact $\Aut(W)$ is contained in $W(\norma_{\Sym(d)}(D))$ and coincides with the projective limit $\varprojlim A_n$, where the sequence of groups $A_n \leq \Sym(d^n)$ is defined inductively by 
$A_1 = \norma_{\Sym(d)}(D)$ and $A_{n+1} = B_{n} \rtimes A_n \leq A_1 \wr A_n$, and $B_{n}$ is the subgroup of $\big(\norma_{\Sym(d)}(D)\big)^{d^n}$ consisting of those $d^n$-tuples $(\alpha_1, \dots, \alpha_{d^n})$ such that $\alpha_i \equiv \alpha_j \mod D$ for all $i, j \in \{1, \dots, d^n\}$.
\end{lemma}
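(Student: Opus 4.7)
The plan is to reduce everything to the identification $\Aut(W) = \norma_{\Aut(\mathcal T)}(W)$, from which saturation, the explicit projective-limit description, and the outer automorphism formula all follow; the wreath-product decomposition of $\Aut(W^k)$ is a separate argument via direct indecomposability of $W$.

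The inclusion $\norma_{\Aut(\mathcal T)}(W) \subseteq \Aut(W)$ is formal and injective: conjugation by a tree automorphism normalising $W$ restricts to an automorphism of $W$, and the centraliser of $W$ in $\Aut(\mathcal T)$ is trivial since level-transitivity of $W$ together with self-replication force any centralising element to act trivially on each $\mathcal T_v$. The converse inclusion $\Aut(W) \subseteq \norma_{\Aut(\mathcal T)}(W)$ is the heart of the matter. Given $\phi \in \Aut(W)$, I would prove by induction on $n$ that $\phi(\st_W(n)) = \st_W(n)$ and that the induced automorphism of $W/\st_W(n) \cong D_n$ is realised by an element of $\norma_{\Sym(d^n)}(D_n)$; the compatible family of finite automorphisms so produced assembles into a tree automorphism normalising $W$. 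The base case $n=1$ is the delicate one: by self-replication, the ``section'' isomorphism $\st_W(1) \cong \prod_{|v|=1} \rist_W(v) \cong W^d$ picks out $\st_W(1)$ intrinsically as the open subgroup of index~$d$ whose quotient is isomorphic to $D$ (with the correct transitive action on the $d$ cosets) and whose ``base'' decomposes as a $d$-fold direct power of a group abstractly isomorphic to $W$ itself. Any $\phi \in \Aut(W)$ must preserve this structure, yielding both saturation and the base case; the inductive step then applies the same reasoning to each $\rist_W(v) \cong W$, using Lavreniuk's analysis \cite{Lavreniuk99} of automorphisms of iterated wreath products to see that the local pieces glue coherently.

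With this identification in place, the explicit description of $\norma_{\Aut(\mathcal T)}(W) = \varprojlim A_n$ is a direct computation. At level~$1$, the action on the root-sphere must lie in $\norma_{\Sym(d)}(D)$, so $A_1 = \norma_{\Sym(d)}(D)$. At level $n+1$, a normalising tree automorphism is determined by its level-$n$ part (an element of $A_n$) together with local choices at each of the $d^n$ rooted subtrees of depth one, each constrained to lie in $\norma_{\Sym(d)}(D)$; the branch structure of $W$ forces these $d^n$ local choices to share a common coset of $D$ in $\norma_{\Sym(d)}(D)$, yielding the subgroup $B_n$ and the semidirect product $A_{n+1} = B_n \rtimes A_n$. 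Passing to the inverse limit and quotienting by $W = \varprojlim D_n$ gives $\Out(W) \cong \prod_{n>0} \norma_{\Sym(d)}(D)/D$, since the $n$-th ``slab'' $A_n/D_n$ contributes a factor of $\norma_{\Sym(d)}(D)/D$. For the formula $\Aut(W^k) = \Aut(W) \wr \Sym(k)$, it suffices to show that every $\psi \in \Aut(W^k)$ permutes the $k$ direct factors; this reduces to direct indecomposability of $W$, which follows from its branch structure since a non-trivial direct decomposition of $W$ would restrict non-trivially to each $\rist_W(v) \cong W$ and iterate incompatibly with self-replication. Combined with triviality of the centre of $W$ (it is a branch group, hence has trivial quasi-centre), this forces $\psi$ to be of wreath-product form.

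The main obstacle is the intrinsic characterisation of $\st_W(1)$, and inductively of each $\st_W(n)$, purely in terms of the abstract group structure of $W$: this is exactly the content of saturation, and it relies essentially on self-replication via the factorisation $\st_W(1) \cong W^d$. Once saturation is secured, the remaining steps are unwinding of the wreath-product structure on each finite quotient $D_n$ together with routine bookkeeping to match the inverse system $(A_n)$ with $\norma_{\Aut(\mathcal T)}(W)$.
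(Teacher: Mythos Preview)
Your overall strategy matches the paper's: establish $\Aut(W)=\norma_{\Aut(\mathcal T)}(W)$ (which yields saturation), then compute the finite quotients $A_n$ explicitly and read off $\Out(W)$. The differences are in how the two hardest inputs are handled.

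The paper does not attempt to prove saturation, $\Aut(W)=\norma_{\Aut(\mathcal T)}(W)$, or $\Aut(W^k)=\Aut(W)\wr\Sym(k)$ from scratch; it quotes \cite{Lavreniuk99} and \cite[Theorem~8.2]{LN02} for all three. Your attempt to give a direct argument for saturation is the one place where your sketch has a genuine gap. First, a slip: the index of $\st_W(1)$ in $W$ is $|D|$, not $d$. More seriously, you offer no reason why the properties you list (quotient isomorphic to $D$, decomposes as a $d$-fold direct power of a copy of $W$) single out $\st_W(1)$ among all open normal subgroups; this is precisely the non-trivial content. The route taken in \cite{LN02} does not characterise $\st_W(n)$ directly but instead shows that every automorphism permutes the restricted stabilisers $\rist_W(v)$ at each level, from which preservation of $\st_W(n)=\prod_{|v|=n}\rist_W(v)$ follows. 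Your ``direct indecomposability'' argument for $\Aut(W^k)$ is along the right lines but again is subsumed by the cited result.

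For the description of $A_{n+1}$, the paper gives the concrete computation that your phrase ``the branch structure of $W$ forces these $d^n$ local choices to share a common coset'' is hiding. Writing elements of $A_1\wr A_n$ as $((\alpha_1,\dots,\alpha_{d^n}),w)$, take $\alpha=((\alpha_i),1)$ with $\alpha_i\in\norma_{\Sym(d)}(D)$ and $g=((1,\dots,1),w)$ with $w\in W_n$. Since $g\in W_{n+1}$ and $W_{n+1}\trianglelefteq A_{n+1}$, the commutator $[\alpha,g]$ lies in $W_{n+1}$, which forces $\alpha_i^{-1}\alpha_{i\cdot w}\in D$ for every $i$; transitivity of $W_n$ on $\{1,\dots,d^n\}$ then gives $\alpha_i\equiv\alpha_j\bmod D$ for all $i,j$. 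This is exactly the constraint defining $B_n$, and it is worth writing out rather than asserting. The $\Out(W)$ computation in the paper proceeds by observing that $C_n=D^{d^n}\leq B_n$ is normal in $A_{n+1}$ with $A_{n+1}/C_n\cong(\norma_{\Sym(d)}(D)/D)\times A_n$, so inductively $A_{n+1}/W_{n+1}\cong\prod_{i=1}^{n+1}\norma_{\Sym(d)}(D)/D$; your ``$n$-th slab contributes a factor'' is this same induction.
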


\begin{proof}
The main result from \cite{Lavreniuk99}  guarantees that $W(D)$ is saturated and that $\Aut(W) = \norma_{\Aut(\mathcal T)}(W)$.  This is also established in \cite[Theorem~8.2]{LN02}, where it is moreover proved that $\Aut(W^k) = \Aut(W) \wr \Sym(k)$.

Set $A =  \norma_{\Aut(\mathcal T)}(W) \leq \Aut(\mathcal T)$, $A_n = A/\st_A(n)$ and $W_n = W/\st_W(n)$ for all $n >0$. Clearly $A \cong \varprojlim A_n$.  We view $A_n$ and $W_n$ as subgroups of $\Aut(\mathcal T_n)$, where $\mathcal T_n$ denotes the finite rooted tree consisting of the truncation of $\mathcal T$ at level~$n$. 

We have $A_n = \norma_{\Aut(\mathcal T_n)}(W_n)$. In particular $A_1 \cong  \norma_{\Sym(d)}(D)$ and $A_{n+1}$ is contained in $A_1 \wr A_n = \big( \bigoplus_{i=1}^{d^n} A_1\big) \rtimes A_n$. Referring to the latter semi-direct decomposition, we shall describe the elements of $A_{n+1}$ as $(d^n+1)$-tuples of the form $\big( (\alpha_1, \dots, \alpha_{d^n}), w\big)$, where $\alpha_i \in A_1$ and $w \in A_n$.

Consider the elements $\alpha=\big( (\alpha_1, \dots, \alpha_{d^n}), 1\big)$ and $g = \big( (1, \dots, 1), w\big)$ of $A_{n+1}$, where $\alpha_i \in A_1$ and $w \in W_n$. Since $W_{n+1}$ is normal in $A_{n+1}$ and contains $g$, we deduce that the commutator $[\alpha, g]$ belongs to $W_{n+1}$. In particular we deduce that $\alpha_i\inv \alpha_{i\cdot w} \in D$ for all $i$. Since $W_n$ is a transitive subgroup of $\Sym(d^n)$, we deduce that $\alpha_i \equiv \alpha_j \mod D$ for all $i$ and $j$. This confirms the claim that $A_{n+1} \cong B_n \rtimes A_n$.

Consider the subgroup $C_n =  D^{d^n} \leq B_n$. Notice that $C_n$ is normal in $A_{n+1}$. Moreover the quotient $A_{n+1}/ C_n$ is isomorphic to the direct product $B_n/C_n \times A_n \cong \norma_{\Sym(d)}(D)/D \times A_n$. Since $C_n$ is contained in $W_{n+1}$, it follows inductively that $A_{n+1}/W_{n+1} \cong \prod_{i=1}^{n+1} \norma_{\Sym(d)}(D)/D$. Therefore, we have  $\Out(W) \cong \prod_{n>0} \norma_{\Sym(d)}(D)/D$ as desired.
\end{proof}

\subsection{Abstract commensurators and Higman--Thompson groups}
We shall need to consider the so called Higman--Thompson groups, an excellent introduction to which  can be consulted in \cite[\S4]{Brown}.  (Numerous other excellent references are available concerning this subject, see notably  \cite[\S3]{Rover} and references cited there). For each $d > 1$ and $k>0$, there is a Higman--Thompson group  $V_{d,k}$ whose definition will be recalled below. Remark that K.~Brown  \cite{Brown} denotes this group by $G_{d,k}$, but we prefer the notation $V_{d,k}$ in order to avoid confusion with (subgroups of) our group $G$. The groups $V_{d, k}$ were first introduced by G.~Higman in \cite{Higman} as generalisations of a group~$V$ earlier introduced by R.~Thompson.  It is shown in \cite[Theorem~5.4]{Higman} that the derived group of $V_{d, k}$ is simple and finitely presented, and has index one or two according as  $d$ is even or odd (see also \cite[Theorems~4.16 and~4.17]{Brown}). 

For the sake of completeness, we briefly describe how the groups $ V_{d, k}$ can be defined. To this end, we denote by $\mathcal T_d$ the regular rooted tree of degree $d$ and consider the graph $\mathcal T_{d, k}$ which is the unique rooted tree having $k$ vertices of level~$1$ and such that $(\mathcal T_{d, k})_v \cong \mathcal T_d$ for all vertices $v$ of level~$1$.  In a similar way as in Section~\ref{sec:BM}, we fix a colouring 
$$ i : E(\mathcal T_{d, k}) \to \{1, \dots, d\}$$
of the edge-set of $\mathcal T_{d, k}$ such that, for each vertex $v$ of level~$n>0$, the restriction of $i$ to the set $D(v)$ is bijective, where $D(v)$ is the set of those edges joining $v$ to one of its $d$ neighbours of level~$n+1$.  An \textbf{almost automorphism} of the tree $\mathcal T_{d, k}$ is a triple of the form $(A, B, \varphi)$, where $A$ and $B$ are finite subtrees of $\mathcal T_{d, k}$ containing the root such that $| \bd A | = | \bd B |$, and $\varphi$ is an isomorphism of forests $\mathcal T_{d, k} \setminus A \to \mathcal T_{d, k} \setminus B$. The \textbf{group of almost automorphisms} of $\mathcal T_{d, k}$, denoted by $\mathrm{AAut}(\mathcal T_{d, k})$, is the quotient of the set of all almost automorphisms by the relation which identifies two almost automorphisms $(A, B, \varphi)$ and $(A', B', \varphi')$ if there exists some finite subtree $A''$ containing $A \cup A'$ and such that $\varphi$ and $\varphi'$ coincide on $\mathcal T_{d, k} \setminus A''$.   It is easy to verify that  $\mathrm{AAut}(\mathcal T_{d, k})$ is indeed a group. 

An element of  $\mathrm{AAut}(\mathcal T_{d, k})$  is called \textbf{rooted} if it can be represented by an almost automorphism $(A, B, \varphi)$ such that for each vertex $v$ of  $\mathcal T_{d, k} \setminus A$, the composed map 
$$ i |_{D(\varphi(v))} \circ \varphi \circ \big( i |_{D(v)} \big)\inv $$
is the identity permutation of $\{1, \dots, d\}$. The \textbf{Higman--Thompson group} $V_{d, k}$ is defined as the subgroup of  
$\mathrm{AAut}(\mathcal T_{d, k})$ consisting of the rooted elements. Its isomorphism type is independent of the choice of the colouring~$i$.

Following K.~Brown, we shall also consider the torsion-free subgroup $F_{d,k} \leq  V_{d, k}$ consisting of the so-called \textbf{order-preserving} elements (see \cite[\S4]{Brown}). In order to define it,  we first remark that a rooted almost automorphism $(A, B, \varphi)$ is uniquely determined by the bijection $\bd A \to \bd B$ of the leaf sets of $A$ and $B$ that it defines. Now we fix a \textbf{planar embedding} of the graph $\mathcal T_{d, k} $ in the plane $\RR^2$, \emph{i.e.} an embedding where edges are represented by segments and two edges intersect in a point if and only if this point represents a common vertex.
Moreover, we choose this embedding in such a way that the root coincides with the origin and that all vertices of level $n$ lie on the line $\{(x, y) \in \RR^2 \mid y = n\}$. Once this planar embedding is fixed, the leaf set $\bd A $ of each finite subtree $A \subset \mathcal T_{d, k}$ inherits a well-defined ordering, say from left to right. A rooted almost automorphism $(A, B, \varphi)$ is then called \textbf{order-preserving} if the induced bijection $\bd A \to \bd B$ preserves that ordering. The subgroup $F_{d, k}  \leq  V_{d, k}$ consists by definition of those elements which can be represented by an order-preserving rooted almost automorphism. One verifies easily that these form indeed a subgroup. The group which is commonly known as \textbf{Thompson's group $F$} is nothing but $F_{2, 1}$, while Thompson's group $V$ is $V_{2, 1}$. It can be shown that  $F_{d, k}$ is isomorphic to $F_{d, 1}$ for any $k>0$, see \cite[Proposition~4.1]{Brown}, but we will not need this fact here.

Notice that every automorphism of $\mathcal T_{d, k} $ defines a unique element of the group $\AAut(\mathcal T_{d, k})$. Thus there is an embedding $\Aut(\mathcal T_{d, k} ) \to \AAut(\mathcal T_{d, k})$; its image intersects the subgroup $F_{d, k}$ trivially. 

Let us finish this discussion by a final comment concerning $V_{d, k}$. We have mentioned above that $V_{d, k}$ is finitely generated, that it is simple if $d$ is even and possesses a simple subgroup of index~$2$ when $d$ is odd. This quotient of order~$2$ can easily be understood from the above definition. Indeed, we have pointed out that a rooted almost automorphism  $(A, B, \varphi)$ is uniquely determined by a bijection $\partial A \to \partial B$. We can post-compose this bijection with the unique order-preserving bijection $\bd B \to \bd A$. In this way, we see that $(A, B, \varphi)$ is uniquely determined by a permutation of $\bd A$. This permutation is either even or odd. One sees that when $d$ is even, we can always enlarge $A$ and $B$ (by ``unfolding a leaf'' to the next level) so as to represent a rooted almost automorphism by an even permutation, while when $d$ is odd, the even or odd character of this permutation is independent of the chosen representative.

\bigskip
Now we return to the setting of the preceding paragraph and consider the wreath branch group  $W = W(D)$ associated to a finite permutation group $D \leq \Sym(d)$ as before. For $k \geq 1$, we denote by $W^k$ the direct product of $k$ copies of $W$. We assume throughout that $D$ is transitive. Since $W$ is self-replicating, we have  $\st_W(n) \cong W^{d^n}$ for all $n \geq 0$. In particular $\st_W(1)$ is isomorphic to the direct product of $d$ copies of $W$. Thus it follows from \cite[Theorem~1.1]{Rover} that the group of abstract commensurators $\Comm(W) = \acomm(W)$ contains a copy of the Higman--Thompson group $V_{d, 1}$. More generally, we have the following.

\begin{lemma}\label{lem:HT}
For each $k \geq 1$, the action of the Higman--Thompson group $V_{d, k}$ by almost automorphisms on $\mathcal T_{d, k}$ defines an embedding  $V_{d, k} \to \acomm(W^k)$.  
\end{lemma}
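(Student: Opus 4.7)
The plan is to construct an explicit homomorphism $V_{d,k} \to \acomm(W^k)$ and verify its injectivity. Write $W = W(D)$ and identify $W^k$ with its natural closed subgroup of $\Aut(\mathcal T_{d,k})$ (acting on the $k$ regular rooted subtrees $(\mathcal T_{d,k})_v$ with $v$ of level~$1$).

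First, given a rooted almost automorphism $g = [(A, B, \varphi)] \in V_{d,k}$, I would associate a germ of isomorphism as follows. For each $v \in \partial A$, the restricted stabiliser $\rist_{W^k}(v)$ acts only on the subtree $(\mathcal T_{d,k})_v$ and, by the self-replicating property of $W$, is canonically isomorphic to $W$. Set
\[ R_A = \prod_{v \in \partial A} \rist_{W^k}(v), \]
so $R_A \cong W^{|\partial A|}$; as $R_A$ has finite index in the pointwise stabiliser of the finite set $A$, it is a compact open subgroup of $W^k$. Define $R_B$ analogously. Because $g$ is rooted, for every $v \in \partial A$ the restriction $\varphi_v : (\mathcal T_{d,k})_v \to (\mathcal T_{d,k})_{\varphi(v)}$ is colour-preserving, hence corresponds to $\mathrm{id}_{\mathcal T_d}$ under the canonical identifications. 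Consequently, conjugation by $\varphi_v$ induces an isomorphism $\rist_{W^k}(v) \to \rist_{W^k}(\varphi(v))$ that coincides with $\mathrm{id}_W$ under the natural identifications of both sides with $W$. Assembling all these pieces yields an isomorphism $\psi_g : R_A \to R_B$ which is nothing but the permutation of factors induced by $\varphi|_{\partial A}$.

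Next I would show that $[\psi_g] \in \acomm(W^k)$ depends only on $g$ and that $g \mapsto [\psi_g]$ is a group homomorphism. If $(A', B', \varphi')$ represents the same element as $(A, B, \varphi)$, there is a finite subtree $A'' \supseteq A \cup A'$ such that $\varphi$ and $\varphi'$ agree on $\mathcal T_{d,k} \setminus A''$; setting $B'' = B \cup \varphi(A'' \setminus A)$ and refining both representatives to $(A'', B'', \varphi|_{\mathcal T_{d,k} \setminus A''})$ shows that $\psi_g$ and $\psi_{g'}$ coincide on $R_{A''}$, a compact open subgroup of $R_A \cap R_{A'}$. Multiplicativity follows similarly by taking a common refinement so that the target of $g$ agrees with the source of $h$, after which the composition of factor-permutations is precisely the one attached to $gh$ in $V_{d,k}$.

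Finally, for injectivity, suppose $[\psi_g]$ is trivial. Then after passing to a refinement we may assume that $\psi_g : R_{A''} \to R_{B''}$ is the restriction of $\mathrm{id}_{W^k}$ for some refinements $A'' \supseteq A$ and $B'' \supseteq B$. Under the factor-permutation description, this forces, for every $v \in \partial A''$, the equality $\rist_{W^k}(v) = \rist_{W^k}(\varphi(v))$ as subgroups of $W^k$; since $\rist_{W^k}(v)$ is non-trivial and its set of non-fixed vertices determines $v$, this gives $\varphi(v) = v$, so $\varphi|_{\partial A''} = \mathrm{id}$. Combined with the colour-preserving condition off $A''$, this forces $g = 1$ in $V_{d,k}$. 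The main technical subtlety is organising the canonical identifications consistently, so that the rooted condition on $\varphi$ is exactly equivalent to each component of $\psi_g$ being the identity of $W$; this is what makes the "permutation of factors" description both well-defined at the level of germs and faithful.
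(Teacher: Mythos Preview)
Your argument is correct and follows exactly the approach of the paper: realise $W^k$ inside $\Aut(\mathcal T_{d,k})$, use the self-replicating property $\rist_{W^k}(v) \cong W$ to turn a rooted almost automorphism into an isomorphism between the compact open subgroups $R_A$ and $R_B$, and check well-definedness, multiplicativity, and injectivity. The paper's own proof is a two-sentence sketch (``every rooted automorphism of $\mathcal T_{d,k}$ indeed defines a germ of automorphism of $W^k$, and it follows from the definitions that this induces an embedding''), so your write-up simply supplies the details the paper omits; there is no substantive difference in strategy.
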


\begin{proof}
By definition, the group $W^k$ can be viewed as a closed subgroup of $\Aut(\mathcal T_{d, k})$ fixing pointwise the $k$ vertices of level~$1$. Moreover, for each vertex $v$ different from the root, we have $\rist_{W^k}(v) \cong W$. Therefore, every rooted automorphism of $T_{d, k}$ indeed defines a germ of automorphism of $W^k$, and it follows from the definitions that this induces an embedding  $V_{d, k} \to \acomm(W^k)$.
\end{proof}


We need to consider some specific open subgroups of $\acomm(W^k)$, which we shall now describe. For all $k >0$ and $n \geq 0$, we set
\[ A_{k, n+1}  = \Aut(W^{kd^n}) =  \Aut(W) \wr \Sym(k d^n). \]
The latter equality follows from Lemma~\ref{lem:Aut(W)}. Thus $A_{1, 1} = \Aut(W)$. This lemma also guarantees that $W$ is saturated and, more generally, that the wreath branch group $W \wr \Sym(k d^n)$ is saturated. This implies moreover that $\Aut(W)$ embeds in $\Aut(\st_W(1) ) = \Aut(W^d) = \Aut(W) \wr \Sym(d)$. More generally, we have 
\begin{align*}
A_{k, n} &= \Aut(W^{kd^{n-1}}) \\
&=   \Aut(W) \wr \Sym(k d^{n-1}) \\
&\leq  \big(\Aut(W) \wr \Sym(d) \big) \wr \Sym(k d^{n-1}) \\
&\leq   \Aut(W) \wr \Sym(k d^{n})\\
&=  A_{k, n+1} \,.
\end{align*}
Thus we have natural inclusions $A_{k,1} \leq A_{k,2} \leq \dotsm$ for all $k >0$, and the inductive limit 
\[ A_k = \bigcup_n A_{k,n} \]
is thus a group. Notice that $A_k$ embeds as a subgroup in $\acomm(W^k)$; we shall identify it with its image.
In particular $A_k$ inherits the strong topology from $\acomm(W^k)$.
Remark moreover that $A_k$ is open in $\acomm(W^k)$ since it contains $\comm(W^k)$.

In a similar way as above, we define $A_{k, n+1}^+ = \Aut(W) \wr \Alt(k d^n)$, which is a subgroup of index two in $A_{k, n+1}$. By restricting the inclusion $A_{k, n} \leq A_{k, n+1}$, we have $A_{k, n}^+ \leq A_{k, n+1}$. We also define an open subgroup $A_k^+ \leq A_k$ by setting $A_k^+ = \bigcup_n A_{k,n}^+$. 

\medskip
It will also be useful to introduce the following analogous subgroups, namely 
\[ O_{k, n+1}  = W \wr \Sym(k d^n) \hspace{.5cm}  \text{and} \hspace{.5cm} O^+_{k, n+1}  = W \wr \Alt(k d^n)\]
for all $k >0$ and $n \geq 0$, and
\[ O_{k}  = \bigcup_n O_{k, n+1} \hspace{.5cm}  \text{and} \hspace{.5cm} O^+_{k}  = \bigcup_n O^+_{k, n+1} \leq O_k.\]
Thus $O_k$ and   $O_k^+$ are open subgroups of $ \acomm(W^k)$, and are respectively contained in $A_k$ and $A_k^+$.

\begin{remark}\label{rem:A_k^+}
It is important to notice that $A_{k, n}^+ \leq A_{k, n+1}^+$ if and only if $D \leq \Alt(d)$. This implies that $[A_k : A_k^+] = 2$ if $D \leq \Alt(d)$, and $A_k^+ = A_k$ otherwise. Similarly, we have  $[O_k : O_k^+] = 2$ if $D \leq \Alt(d)$, and $O_k^+ = O_k$ otherwise.
\end{remark}

\begin{lemma}\label{lem:A_k}
We have the following.
\begin{enumerate}[\rm (i)]
\item The groups $A_k$ and $A_k^+$ (resp. $O_k$ and $O_k^+$) are non-compact and locally elliptic.

\item $A_k^+$   (resp. $O_k^+$) is topologically simple. 

\end{enumerate}
\end{lemma}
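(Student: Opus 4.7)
For part (i), each $A_{k,n} = \Aut(W^{kd^{n-1}}) = \Aut(W) \wr \Sym(kd^{n-1})$ is compact, being a wreath product of the profinite group $\Aut(W)$ (by Lemma~\ref{lem:Aut(W)}) with a finite symmetric group, and open in $A_k$ since $A_{k,1}$ already contains the compact open subgroup $\comm(W^k) \leq \acomm(W^k)$. The inclusions $A_{k,n} \subsetneq A_{k,n+1}$ are strict: $A_{k,n+1}$ contains, for instance, transpositions of two points lying in different blocks of size $d$, which break the block structure inherited from level $n$. Hence $A_k = \bigcup_n A_{k,n}$ is not contained in any compact subgroup and is non-compact. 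For local ellipticity, any compact subset $K \subseteq A_k$ is covered by the ascending open cover $\{A_{k,n}\}_{n \geq 1}$, and by compactness $K \subseteq A_{k,N}$ for some $N$, so $K$ lies in a compact subgroup. The same argument applies verbatim to $A_k^+$, $O_k$, and $O_k^+$.

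For (ii), I sketch the plan for $A_k^+$ (the argument for $O_k^+$ is analogous). Let $N \trianglelefteq A_k^+$ be a closed non-trivial normal subgroup. The strategy is to show, in order, that $N$ contains (a) the \emph{diagonal alternating subgroup}
\[ \mathrm{Alt}_\infty := \bigcup_n \Alt(kd^{n-1}) \leq A_k^+, \]
where each $\Alt(kd^{n-1})$ is the top factor of $A_{k,n}^+ = \Aut(W) \wr \Alt(kd^{n-1})$, and (b) each base subgroup $\Aut(W)^{kd^{n-1}}$. Since $\mathrm{Alt}_\infty$ is an ascending union of finite simple alternating groups (degrees $\geq 5$ for large $n$), it is simple; and since $\bigcup_n A_{k,n}^+$ is dense in $A_k^+$, combining the two inclusions with closedness of $N$ would yield $N = A_k^+$.

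For step~(a), take $1 \neq g \in N \cap A_{k,n}^+$, decompose $g = b \sigma$ with $b$ in the base and $\sigma \in \Alt(kd^{n-1})$, and view $g$ inside $A_{k,m}^+$ for $m \gg n$. By choosing an alternating permutation $\tau \in \Alt(kd^{m-1})$ whose support lies mostly outside that of $b$ but which does not commute with $\sigma$, the commutator $[g,\tau] \in N$ can be arranged so that its base component cancels out while its alternating component $[\sigma,\tau]$ remains non-trivial, yielding an element of $N \cap \mathrm{Alt}_\infty \setminus \{1\}$; simplicity of $\mathrm{Alt}_\infty$ then gives $\mathrm{Alt}_\infty \leq N$. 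For step~(b), once $\mathrm{Alt}_\infty \leq N$, commutators $[b,\sigma] = b \cdot \sigma b^{-1} \sigma^{-1}$ with $\sigma \in \mathrm{Alt}_\infty$ and $b$ a base element lie in $N$ as pure base elements; by combining such commutators while varying $\sigma$ over $\mathrm{Alt}_\infty$ and using the refinement structure (an element of the base at level $n$ spreads over $d^{m-n}$ coordinates at level $m$, enabling cancellations), one shows that $N$ contains each $\Aut(W)^{kd^{n-1}}$. The main obstacle is the commutator calculation in step~(a): crafting $\tau$ so the base components of $[g,\tau]$ cancel while a non-trivial alternating part persists requires careful bookkeeping on the wreath-product structure, and is precisely the place where the flexibility of passing to arbitrarily high levels in the inductive limit is essential.
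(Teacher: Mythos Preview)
For part~(i), your argument is essentially the paper's: $A_k$ is by definition the union of a strictly ascending chain of compact open subgroups, hence non-compact and locally elliptic.

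For part~(ii), your route differs substantially from the paper's, and your sketch has a genuine gap. The paper argues as follows: since $W^k$ has trivial quasi-centre, so does $\acomm(W^k)$ (by \cite[Prop.~3.2(c)]{BarneaErshovWeigel}) and hence so does its open subgroup $A_k^+$; therefore any non-trivial closed normal $N \trianglelefteq A_k^+$ is non-discrete and meets $\comm(W^k)$. It follows that for all sufficiently large $n$ the image of $N \cap A_{k,n+1}^+$ in $\Alt(kd^n)$ is a non-trivial \emph{normal} subgroup, hence all of $\Alt(kd^n)$. Surjectivity at every level $m \geq n$ then yields density of $N \cap A_{k,n+1}^+$ in $A_{k,n+1}^+$, and closedness of $N$ finishes.

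Your step~(a), by contrast, tries to land a commutator $[g,\tau]$ inside the lifted copy of $\Alt(kd^{m-1})$ by choosing $\tau$ supported outside the support of the base component $b'$. But nothing prevents \emph{every} coordinate $b'_i$ from being non-trivial; worse, the $b'_i \in \Aut(W)$ can all be pairwise distinct, in which case the only permutation $\tau$ with $\tau(b') = b'$ is the identity, and the commutator is trivial. Passing to higher level does not obviously create the needed vanishing or repetition: the level-$(m{+}1)$ base components are just the further tree-decompositions of the $b'_i$, with no structural constraint forcing coincidences. Your step~(b) has a parallel problem: commutators $[b,\sigma]$ with $\sigma \in \mathrm{Alt}_\infty$ produce only elements of the form $b^{-1}\sigma(b)$, which generate at most the ``augmentation-zero'' subgroup of $\Aut(W)^{kd^{n-1}}$, not the whole base unless $\Aut(W)$ is perfect. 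The paper's approach via the quasi-centre and normality of the projected image in $\Alt(kd^n)$ sidesteps both of these obstructions cleanly.
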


\begin{proof}
We shall prove the statements for $A_k$ and $A_k^+$; the arguments for $O_k$ and $O_k^+$ are similar. 
\begin{compactenum}[(i)]
\item By definition $A_k  \leq  \acomm(W^k)$ is a union of an infinite ascending chain of compact open subgroups of  $\acomm(W^k)$. The desired assertion is thus clear. 

\item 
Let $N \leq A_k^+$ be a non-trivial closed normal subgroup. Since $W^k$ has trivial quasi-centre, so does $\acomm(W^k)$ by \cite[Prop.~3.2(c)]{BarneaErshovWeigel}. Since the quasi-centre of a group contains the quasi-centre of any open subgroup, we infer that $A_k^+$ has also a trivial quasi-centre. Thus  $N$ is non-discrete. It therefore meets $\comm(W^k)$ non-trivially. In particular,  we deduce that, for $n$ large enough, the intersection $N \cap A_{k, n+1}^+$ maps onto a non-trivial normal subgroup of $\Alt(kd^n)$. Therefore, it maps onto $\Alt(kd^n)$. It follows that the same holds for $N \cap A_{k, m+1}^+$ for all $m \geq n$. This implies in particular that $N \cap A_{k, n+1}^+$ is dense in $A_{k, n+1}^+$. Since $N$ is closed, we deduce that $\bigcup_n A_{k,n}^+$ is contained in $N$, and the desired simplicity assertion follows. A similar argument shows that any non-trivial closed normal subgroup of $A_k$ contains $A_k^+$.\qedhere
%
\end{compactenum}
\end{proof}

The following result describes  the structure of the group of abstract commensurators of a self-replicating wreath branch group. We identify $V_{d, k}$ and $F_{d, k}$ with their images in $\acomm(W^k)$, see Lemma~\ref{lem:HT}.

\begin{theorem}\label{thm:CommBranch:Iwasawa}
Let $k > 0$. Then
$\acomm(W^k)$ is generated by its subgroups $V_{d,k}$ and $ A_{k,2} $, it is contained in the group $ \AAut(\mathcal T_{d, k})$ of almost automorphisms of the tree $\mathcal T_{d, k}$ and, moreover, we have the decomposition
\[ \acomm(W^k) = F_{d, k} \cdot A_k, \]
where $F_{d, k} \cap A_k = 1$. In particular  $F_{d, k}$ is a discrete subgroup of infinite covolume, and $A_k$ is a maximal locally elliptic subgroup of $\acomm(W^k)$. Moreover the closure of $V_{d, k}$ coincides with the open subgroup
$F_{d, k} \cdot O_k \leq \acomm(W^k) $.
\end{theorem}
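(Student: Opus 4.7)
My plan is to first establish an injective homomorphism $\Psi \colon \acomm(W^k) \hookrightarrow \AAut(\mathcal T_{d,k})$ and then read off the remaining assertions from the almost-automorphism data. The embedding relies on Lemma~\ref{lem:Aut(W)}: given an isomorphism $\varphi \colon U \to V$ between compact open subgroups of $W^k$, the saturation of $W$ makes the subgroups $\st_{W^k}(A) = \prod_{v \in \bd A} W$ (for $A$ a finite subtree containing the root) cofinal among compact open subgroups, so after restriction $\varphi$ becomes an isomorphism $\prod_{v \in \bd A} W \to \prod_{w \in \bd B} W$. The identity $\Aut(W^m) = \Aut(W) \wr \Sym(m)$ combined with $W^m \cong W^n \Rightarrow m = n$ (obtained by comparing indices of the characteristic subgroups $\st_W(n)^m$) then forces $|\bd A| = |\bd B|$ and decomposes $\varphi$ into a permutation $\bd A \to \bd B$ together with branch automorphisms from $\Aut(W) = \norma_{\Aut(\mathcal T)}(W) \subseteq \Aut(\mathcal T)$. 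This data assembles into a forest isomorphism $\mathcal T_{d,k} \setminus A \to \mathcal T_{d,k} \setminus B$, \emph{i.e.}\ an element of $\AAut(\mathcal T_{d,k})$; well-definedness and injectivity of $\Psi$ are routine.

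Given the embedding, the decomposition $\acomm(W^k) = F_{d,k} \cdot A_k$ follows by factoring each almost automorphism $(A, B, \varphi)$. Let $\rho \in F_{d,k}$ be the unique rooted order-preserving almost automorphism with the same subtrees $(A, B)$ and the order-preserving boundary bijection $\bd A \to \bd B$ determined by the fixed planar embedding. The composition $\rho^{-1} g$ then admits a representative of the form $(A, A, \psi)$ with $\psi$ a forest automorphism of $\mathcal T_{d,k} \setminus A$; after enlarging $A$ to a full level-$n$ truncation $C_n$, this is an element of $A_{k,n+1} \leq A_k$. Triviality of $F_{d,k} \cap A_k$ is immediate: any element represented by $(C_n, C_n, \psi)$ that is simultaneously order-preserving on leaves and colour-preserving on branches must be trivial.

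Generation by $V_{d,k}$ and $A_{k,2}$ reduces, via the decomposition and the inclusion $F_{d,k} \subseteq V_{d,k}$, to showing $A_k \subseteq \langle V_{d,k}, A_{k,2} \rangle$. The top parts $\Sym(kd^{n-1})$ of each $A_{k,n}$ appear in $V_{d,k}$ as the rooted almost automorphisms $(C_n, C_n, \pi)$ with trivial branch action, while the wreath bases $\Aut(W)^{kd^{n-1}}$ require a more delicate argument combining the self-similar embeddings $A_{k,n-1} \hookrightarrow A_{k,n}$ (induced by $\Aut(W) \hookrightarrow \Aut(\st_W(1)) = \Aut(W) \wr \Sym(d)$, from Lemma~\ref{lem:Aut(W)}) with conjugation by level-permutations in $V_{d,k}$; I anticipate this step, which has to track how the outer classes of $\Aut(W)/W$ propagate through the branch structure, will be the main technical obstacle.

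The remaining topological assertions follow quickly. Since $F_{d,k} \cap A_k = \{1\}$ and $A_k$ is open (containing $\comm(W^k)$), $F_{d,k}$ is discrete, and it has infinite covolume because it is infinite. For maximality of $A_k$ among locally elliptic subgroups, any strictly larger $K \supsetneq A_k$ must contain a non-trivial $\rho \in F_{d,k}$ by the decomposition; since $F_{d,k}$ is torsion-free by Brown~\cite[\S4]{Brown}, the cyclic subgroup $\langle \rho \rangle$ is infinite and discrete, so cannot sit inside any compact subgroup, contradicting local ellipticity of $K$. Finally, $\overline{V_{d,k}} = F_{d,k} \cdot O_k$ is obtained by two inclusions: each $w \in W$ is approximated in the strong topology by its level-$n$ truncations, which are finite permutations in $V_{d,k}$, yielding $W \leq \overline{V_{d,k}}$ and hence $O_k \leq \overline{V_{d,k}}$; the reverse inclusion $V_{d,k} \subseteq F_{d,k} \cdot O_k$ is the decomposition applied to elements of $V_{d,k}$, noting that $\Sym(kd^{n-1}) \subseteq O_k$.
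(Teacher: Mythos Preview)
Your overall architecture matches the paper's, but the first step---constructing the embedding $\Psi\colon \acomm(W^k)\hookrightarrow\AAut(\mathcal T_{d,k})$---has a genuine gap. Cofinality of the subgroups $\prod_{v\in\bd A}\rist(v)$ lets you arrange the \emph{domain} of a representative $\varphi$ to be of this form, but it gives you no control over the \emph{image}: $\varphi\bigl(\prod_{v\in\bd A}\rist(v)\bigr)$ is some open subgroup $V\leq W^k$, and nothing you have said forces $V=\prod_{w\in\bd B}\rist(w)$. The identity $\Aut(W^m)=\Aut(W)\wr\Sym(m)$ only tells you that \emph{automorphisms} of $W^m$ permute the abstract direct factors; it does not tell you that those abstract factors of $V\cong W^m$ coincide with restricted stabilisers for the ambient $W^k$-action on $\mathcal T_{d,k}$. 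This identification is exactly the content of Lemma~\ref{lem:Rover}, which the paper proves via R\"over's theorem realising $\Comm(\tilde W)$ inside $\mathrm{Homeo}(\bd\tilde{\mathcal T})$ and analysing orbit closures: one shows $\varphi(\rist(v))=\rist(\Phi(v))$ for a map $\Phi$ on vertices sending leaf sets to leaf sets. Without this (or an algebraic substitute characterising restricted stabilisers intrinsically inside $W^k$), your almost-automorphism data $(A,B,\text{bijection})$ is not well defined.

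Two smaller points. Your justification for infinite covolume (``because it is infinite'') is incorrect: infinite discrete subgroups can be lattices. The paper argues that $F_{d,k}\cap A_k=1$ and $A_k$ open imply the covolume is bounded below by $\vol(A_k)$, which is infinite since $A_k$ is non-compact (Lemma~\ref{lem:A_k}). For generation by $V_{d,k}$ and $A_{k,2}$, the argument is simpler than you anticipate: rather than tracking outer classes through the branch embeddings, one observes that for any vertex $v$ of level~$\geq 2$ there is an element of $V_{d,k}$ carrying $\mathcal T_v$ to some $\mathcal T_{v'}$ with $v'$ at level~$2$, so each factor $\Aut(\rist(v))$ of the wreath base of $A_{k,n}$ is $V_{d,k}$-conjugate into $A_{k,2}$.
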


We postpone the proof until the end of the section and first proceed to describe some consequences. 

\begin{cor}\label{cor:M(D, k)}
Let $d >1$, let $D \leq \Sym(d)$ be transitive  and set $W = W(D)$. Let moreover $k>0$ and let  $M = M(D, k)$ denote the intersection of all non-trivial closed normal subgroups of $\acomm(W^k)$. We have the following.

\begin{enumerate}[\rm (i)]
\item $M$ is a rigid group belonging to the class $\mS$.

\item $M$ is open in $\acomm(W^k)$ and contains $\comm(W^k)$.

\item $[\acomm(W^k) : M] \leq 2$.

\item $[\acomm(W^k) : M] = 2$ if and only if $d$ is odd and $D \leq \Alt(d)$. In that case we have  $M = F_{d, k} \cdot A_k^+$.
\end{enumerate}
\end{cor}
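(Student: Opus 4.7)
My plan is to identify $M$ with the monolith of $\acomm(W^k)$ and then to compute the index $[\acomm(W^k):M]$ by exhibiting a sign homomorphism. Since $W=W(D)$ is a branch group, $\QZ(W)=1$, whence $\QZ(W^k)=1$, and by \cite[Prop.~3.2(c)]{BarneaErshovWeigel} the quasi-centre of $\acomm(W^k)$ is trivial. Consequently, any non-trivial closed normal subgroup $N$ of $\acomm(W^k)$ is non-discrete, and $N\cap A_k$ is a non-trivial closed normal subgroup of the open group $A_k$; Lemma~\ref{lem:A_k}(ii) then forces $A_k^+\subseteq N\cap A_k$. Intersecting over all such $N$ yields $A_k^+\subseteq M$, which in particular proves that $M$ is open. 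For the remaining part of (ii), I will embed $\comm(W^k)\cong W^k$ into $A_{k,1}=\Aut(W)\wr\Sym(k)$ with trivial $\Sym(k)$-coordinate, and then into $A_{k,2}=\Aut(W)\wr\Sym(kd)$; the induced $\Sym(kd)$-coordinate is the product of level-$1$ actions $\pi(g_i)\in D$. When $D\leq\Alt(d)$, this lies in $\Alt(kd)$, so $\comm(W^k)\subseteq A_{k,2}^+\subseteq A_k^+\subseteq M$; when $D\not\leq\Alt(d)$, Remark~\ref{rem:A_k^+} gives $A_k^+=A_k$ and the inclusion is trivial.

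To prove (i), I will follow the strategy used in the proof of Proposition~\ref{prop:AdmissibleRigid}(i). Since $\QZ(M)\subseteq\QZ(\acomm(W^k))=1$ (because any open centraliser in $M$ is open in $\acomm(W^k)$, as $M$ is open), every non-trivial closed normal subgroup of $M$ meets the open topologically simple subgroup $A_k^+$ non-trivially and therefore contains $A_k^+$. Hence the intersection $M_0$ of all non-trivial closed normal subgroups of $M$ is non-trivial and characteristic in $M$. Because $M$ is normal in $\acomm(W^k)=\acomm(M)$, conjugation defines a map $\acomm(W^k)\to\Aut(M)$, so $M_0$ is closed normal in $\acomm(W^k)$; the minimality defining $M$ then yields $M_0=M$, proving topological simplicity. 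Rigidity follows from Proposition~\ref{pr:rigid}(c), since $\comm(M)=M$ is normal in $\acomm(M)=\acomm(W^k)$ by construction.

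For (iii) and (iv), note first that $\acomm(W^k)/M$ is discrete (as $M$ is open) and simple (as $M$ is the monolith). I will construct a continuous sign homomorphism $\epsilon\colon\acomm(W^k)\to\ZZ/2$ whose kernel contains $A_k^+$ and show that $\epsilon$ is non-trivial precisely when $d$ is odd and $D\leq\Alt(d)$. Using the inclusion $\acomm(W^k)\subseteq\AAut(\mathcal T_{d,k})$ from Theorem~\ref{thm:CommBranch:Iwasawa}, represent $g\in\acomm(W^k)$ by an almost automorphism $(A,B,\varphi)$ with $A=B$ a ball around the root, and take $\epsilon(g)$ to be the parity of the permutation induced by $\varphi$ on $\bd A$ after composing with the canonical order-preserving identification. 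Invariance under unfolding one leaf to the next level introduces a factor of $(-1)^{d-1}$ from the block expansion, which must be cancelled by the sign of the level-$1$ action (in $\norma_{\Sym(d)}(D)$) of the $\Aut(W)$-component at that leaf. A case analysis in the four combinations of parity of $d$ and position of $D$ with respect to $\Alt(d)$ isolates the asserted case $d$ odd and $D\leq\Alt(d)$ as the only one in which $\epsilon$ is both well-defined and non-trivial. In that case $\ker(\epsilon)$ is a closed normal subgroup of index~$2$ containing $A_k^+$ and containing $F_{d,k}$ (whose elements are order-preserving, hence sign-trivial); the decomposition $\acomm(W^k)=F_{d,k}\cdot A_k$ then forces $\ker(\epsilon)=F_{d,k}\cdot A_k^+$, and simplicity of the quotient identifies $M$ with $\ker(\epsilon)$, yielding~(iv). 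In the remaining three cases $\epsilon$ is trivial, and simplicity of the quotient $\acomm(W^k)/M$ combined with the fact that $V_{d,k}\cap M\supseteq V_{d,k}\cap A_k^+\neq 1$ (so $V_{d,k}$ is forced into $M$ by simplicity considerations applied to $V_{d,k}$ or its simple subgroup $V_{d,k}^+$) together with $A_k^+\subseteq M$ and the generating statement of Theorem~\ref{thm:CommBranch:Iwasawa} yields $M=\acomm(W^k)$.

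The principal obstacle will be the sign analysis in the third paragraph: one must carefully track the interaction between the $(-1)^{d-1}$ unfolding factor and the parities of the $\norma_{\Sym(d)}(D)$-actions at each unfolded leaf in order to isolate the exceptional case. Once $\epsilon$ is established as a genuine group homomorphism, the fact that $F_{d,k}\cdot A_k^+$ is a subgroup (not merely a set) in the exceptional case drops out automatically, and the remaining combinatorics reduces to the decomposition in Theorem~\ref{thm:CommBranch:Iwasawa}.
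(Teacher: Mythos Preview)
Your approach to (i) and (ii) is essentially the paper's, re-deriving the relevant part of Proposition~\ref{prop:AdmissibleRigid}. For (iii) and (iv) you take a genuinely different route, attempting to construct an explicit sign character $\epsilon$, whereas the paper argues indirectly: it uses the simplicity of $[V_{d,k},V_{d,k}]$ to show $F_{d,k}\subseteq [V_{d,k},V_{d,k}]\subseteq M$, which via $\acomm(W^k)=F_{d,k}\cdot A_k$ reduces (iii) to $[A_k:A_k^+]\leq 2$; for (iv) it shows that $F_{d,k}\cdot A_k^+$ is closed (its complement being a single coset, hence open) rather than exhibiting a homomorphism with that kernel.

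Your plan has a genuine gap. The assertion that $\acomm(W^k)/M$ is simple ``as $M$ is the monolith'' is false in general: the quotient by a monolith need not be simple (e.g.\ $\ZZ/2\times\ZZ/2$ has trivial monolith). You use this claim to pass from $M\subseteq\ker(\epsilon)$ to $M=\ker(\epsilon)$ in the exceptional case, but what is actually needed there is $F_{d,k}\subseteq M$, which requires exactly the $[V_{d,k},V_{d,k}]$ argument you only invoke in the other three cases. Once you run that argument uniformly you recover (iii) directly ($\acomm(W^k)/M$ is a quotient of $A_k/A_k^+$), and the appeal to simplicity of the quotient becomes unnecessary.

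There is a second, more technical, obstruction to your sign map. Not every element of $\acomm(W^k)$ is represented by $(A,B,\varphi)$ with $A=B$ a ball; you must allow general finite $A,B$ and compose with the order-preserving bijection $\partial B\to\partial A$, as the paper does for $V_{d,k}$. More seriously, when you unfold a leaf the local permutation at that leaf lies in $\norma_{\Sym(d)}(D)$ (this is the level-$1$ action of $\Aut(W)$, see Lemma~\ref{lem:Aut(W)}), not merely in $D$. Your four-case analysis is keyed to the parity of $d$ and to whether $D\leq\Alt(d)$, but well-definedness of $\epsilon$ on $A_k$ depends on whether $\norma_{\Sym(d)}(D)\leq\Alt(d)$; when $D\leq\Alt(d)$ but $\norma_{\Sym(d)}(D)\not\leq\Alt(d)$ (e.g.\ $d=3$, $D=A_3$) your $\epsilon$ as described changes under unfolding. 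So the ``principal obstacle'' you flag is more delicate than a routine case check, and the paper's indirect argument sidesteps it entirely.
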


\begin{proof}
\begin{compactenum}[(i)]
\item 
By Lemma~\ref{lem:A_k}, the group $A_k^+$ is topologically simple and contains $W^k$ as a compact open subgroup. Thus $\acomm(W^k) = \acomm(A_k^+)$ and, since $W^k$ has trivial quasi-centre, so does $A_k^+$. Therefore the assertion~(i) and the fact that $M$ is open follow Proposition~\ref{prop:AdmissibleRigid}.

\item[(ii)] Clear since $\comm(W^k) \leq A_k^+ \leq M$.

\item[(iii)]  By definition  $M$ is  normal in $\acomm(W^k)$. Set $V = V_{d, k} $ and identify it with its image in $\acomm(W^k)$, see Lemma~\ref{lem:HT}. 
%
%
Since $V$ is dense in $F_{d, k} \cdot O_k \leq \acomm(W^k)$ by Theorem~\ref{thm:CommBranch:Iwasawa} and since $O_k$ is open, it follows that $V \cap O_k$ is dense in $O_k$. In particular $M \cap V$ contains an infinite normal subgroup of $V$.
Since the derived group $[V, V]$ is simple (see, for example, \cite[Theorem~4.16]{Brown}), it follows that $M$ contains $[V, V]$.
Since $[V, V]$ contains $F_{d, k}$, we deduce from Theorem~\ref{thm:CommBranch:Iwasawa} that $\acomm(W^k)/M \cong A_k/A_k \cap M$. Lemma~\ref{lem:A_k} guarantees that $M$ contains $A_k^+$ and the desired assertion thus follows since $[A_k, A_k^+] \leq 2$, see Remark~\ref{rem:A_k^+}.

\item[(iv)]
If $d$ is even, then $V$ is simple. Since it is dense in $F_{d, k} \cdot O_k \leq \acomm(W^k)$ and since $M$ is open and normal, we deduce that $M $ contains $F_{d,k} \cdot O_k$. Moreover $M$ contains $A_k^+$ by (iii), and since $A_k = O_k \cdot A_k^+$, we infer that $M= \acomm(W^k)$, as desired.

Assume now that $d$ is odd and that $D \leq \Sym(d)$ contains an odd permutation. Since $V \cap W^k$ is dense in $W^k$, the existence of this permutation guarantees  that $V \cap W^k$  contains an element $\sigma$ which is not annihilated in $V/ [V, V]$ (see the discussion on $V/[V, V]$ at the beginning of the present subsection above). Since $\acomm(W^k)/M$ embeds in the abelianization $V/[V, V]$, we have $\acomm(W^k) = \la \sigma \ra \cdot M$. By construction $\sigma \in M$, whence $M =\acomm(W^k)$, as desired. 

\medskip
Assume conversely that $d$ is odd and that $D \leq \Alt(d)$. Then $A_k$ contains an open subgroup $A_k^+$ of index two which is topologically simple, see Lemma~\ref{lem:A_k} and Remark~\ref{rem:A_k^+}. 

By Theorem~\ref{thm:CommBranch:Iwasawa}, we have $\acomm(W^k) = F_{d, k} \cdot A_k$. Moreover $F_{d, k}$ is contained in the derived group $[V, V]$. Therefore, we infer that the set $F_{d, k} \cdot A^+_k$ is a proper subset of $\acomm(W^k)$ which contains  $[V, V]$. Notice that $F_{d, k} \cdot A^+_k$ is open in $\acomm(W^k)$ since $A^+_k$ is so. Moreover its complement is also open, since it coincides with the translate $F_{d, k} \cdot A^+_k \cdot \sigma$ for any element $\sigma \in A_k \setminus A^+_k$. In particular $F_{d, k} \cdot A^+_k$ is closed. Thus we have highlighted a proper closed subset of $\acomm(W^k)$ which contains the subgroup generated by $[V, V] \cup A^+_k$. Since  $\acomm(W^k)$ is generated by $V \cup A^+_k$, we deduce that 
\[ M = \la [V, V] \cup A_k^+ \ra = F_{d, k} \cdot A^+_k \]
and, hence, that $M$ has index two in $\acomm(W^k)$, as desired.\qedhere
\end{compactenum}
\end{proof}

The following result is a straightforward adaptation of Theorem~4.16 from \cite{BarneaErshovWeigel}.

\begin{cor}\label{cor:EmbeddingSimple}
Assume that the finite group $D$ is perfect, and let $W = W(D)$. Then $W^k$ embeds as a compact open subgroup in some compactly generated  topologically simple group $G  \in \mathcal S$. 
\end{cor}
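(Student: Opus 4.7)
The plan is to realise $G$ as a well-chosen open subgroup of $\acomm(W^k)$ combining the Higman--Thompson group with the branch structure of $W^k$. Specifically, I would take $G := \overline{V'}$, the closure in $\acomm(W^k)$ of the derived subgroup $V' := [V_{d,k}, V_{d,k}]$ of the Higman--Thompson group $V_{d,k}$, where $V_{d,k}$ is embedded in $\acomm(W^k)$ via Lemma~\ref{lem:HT}. By a theorem of Higman, $V_{d,k}$ is finitely generated and $V'$ is abstractly simple of index at most~$2$ in $V_{d,k}$, hence also finitely generated. By Theorem~\ref{thm:CommBranch:Iwasawa}, $\overline{V_{d,k}} = F_{d,k} \cdot O_k$, so $G$ is either this open subgroup of $\acomm(W^k)$ or an open index-$2$ subgroup thereof, depending on whether the sign homomorphism $V_{d,k} \to C_2$ extends continuously to the closure.

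The first task is to verify that $G$ contains $W^k$ as a compact open subgroup: since $W^k \leq W \wr \Alt(k) = O_{k,1}^+ \leq O_k^+ \leq G$, and $W^k$ is open in $\acomm(W^k)$, it is compact open in $G$. Compact generation of $G$ is immediate, since a finite generating set for $V'$ topologically generates its closure $G$.

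The core of the proof is showing that $G$ is topologically simple. Let $N$ be a non-trivial closed normal subgroup of $G$. Then $N \cap V'$ is a normal subgroup of the abstractly simple group $V'$, hence either trivial or all of $V'$. In the second case, $N$ contains the dense subgroup $V'$ and is closed, forcing $N = G$. In the first case, $[V', N] \subseteq V' \cap N = 1$, so $V'$ centralises $N$; the centraliser of $N$ in $G$ is closed and contains the dense subgroup $V'$, and must therefore equal $G$, giving $N \subseteq Z(G)$.

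The hard part will be to show $Z(G) = 1$, ruling out this second case. Since $G$ is open in $\acomm(W^k)$, any element $g \in \QZ(G)$ has centraliser $C_G(g)$ open in $\acomm(W^k)$, so $\QZ(G) \leq \QZ(\acomm(W^k))$; the latter coincides with $\QZ(W^k)$ by \cite[Prop.~3.2(c)]{BarneaErshovWeigel}, which is trivial because $W$ is a branch group. Hence $Z(G) \leq \QZ(G) = 1$, contradicting the existence of a non-trivial $N \subseteq Z(G)$ and completing the simplicity argument. The hypothesis that $D$ is perfect enters via Lemma~\ref{lem:JustInfinite} (just-infiniteness of $W$), which underlies further structural properties of $G$ and makes this construction the natural adaptation of \cite[Theorem~4.16]{BarneaErshovWeigel}.
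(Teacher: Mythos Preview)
Your simplicity argument has a real gap: from $N \trianglelefteq G$ you only get $[V', N] \subseteq N$; the inclusion $[V', N] \subseteq V'$ would require $V'$ to be normalised by $N$ (for instance if $V'$ were normal in $G$), and this is not established --- $V'$ is merely dense in $G$, not normal. So the case $N \cap V' = 1$ is not ruled out and the argument collapses at that point. There is a secondary gap as well: the containment $O_k^+ \leq G$ is asserted without justification; knowing that $G$ has index at most~$2$ in $\overline{V_{d,k}} = F_{d,k} \cdot O_k$ does not by itself identify which index-$2$ subgroup $G$ is, nor that it contains $O_k^+$.

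Both gaps are filled precisely by the perfectness hypothesis, which your final sentence treats too vaguely. Since $D$ is perfect, Lemma~\ref{lem:JustInfinite} makes $W$ just-infinite, and hence so is $W \wr \Alt(k)$. The paper's argument then runs as follows: a non-trivial closed normal subgroup $N \leq G$ is non-discrete (your observation $\QZ(G)=1$ is correct and is used here), so $N$ meets the just-infinite compact open subgroup $W \wr \Alt(k)$ in a non-trivial closed normal subgroup, which must therefore be open; hence $N$ itself is open. Density of $V_{d,k} \cap W^k$ in $W^k$ now forces $N \cap V'$ to be infinite, and simplicity of $V'$ yields $V' \leq N$, whence $N = G$. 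Perfectness also closes the other gap: the profinite group $W^k$ is then perfect and admits no open subgroup of index~$2$, so from $[W^k : W^k \cap G] \leq 2$ one gets $W^k \leq G$.
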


\begin{proof}
Let $G \leq \acomm(W^k)$ be the closed subgroup generated by $[V_{d, k}, V_{d, k}]$. Thus $G$ is generated by the compact open subgroup $W^k$ and the finitely generated group $[V_{d, k}, V_{d, k}]$. In particular it is compactly generated. By Lemma~\ref{lem:JustInfinite}, the group $W$ is just-infinite. Therefore, so is $W \wr \Alt(k)$, which embeds as a compact open subgroup in $G$. As in the proof of \cite[Theorem~4.16]{BarneaErshovWeigel}, one deduces that any non-trivial closed normal subgroup of $G$ is open. Since $V_{d, k} \cap W^k$ is dense in $W^k$, it follows that any open subgroup of $G$ contains an infinite subgroup of $V_{d, k}$. The result now follows from the fact that $[V_{d, k}, V_{d, k}]$ is simple and of index at most two in $V_{d,k}$.
\end{proof}

Before undertaking the proof of Theorem~\ref{thm:CommBranch:Iwasawa}, we need some additional notation and terminology. Let $\tilde W = W \wr \Sym(k)$ and let $\tilde{\mathcal T} = \mathcal T_{d, k}$ denote the rooted tree whose root has valence $k$ and such that $\tilde{\mathcal T}_v $ is isomorphic to $\mathcal T = \mathcal T_d$ for each vertex $v$ distinct from the root. The group $\tilde W$ naturally acts on $\tilde{\mathcal T}$ by automorphisms and $\tilde W$ is branch. Moreover $\acomm(W^k) \cong \acomm(\tilde W)$. 

A pair of vertices of $\tilde{\mathcal T}$ is called \textbf{independent} if no geodesic ray emanating from the root contains both. More generally, a subset of $V(\tilde{\mathcal T})$  consisting of pairwise independent vertices is called independent. A \textbf{leaf set} is an independent subset of $V(\tilde{\mathcal T})$ which is maximal among independent sets. Given $n \geq 0$, we denote by $V_n(\tilde{\mathcal T})$ (resp. $V_{\geq n}(\tilde{\mathcal T})$) the set of vertices of level $n$ (resp. at least $n$).  Thus $V_n(\tilde{\mathcal T})$ is a leaf set.


We shall need the following. 

\begin{lemma}\label{lem:Rover}
Let $n \geq 0$, $B \leq \tilde W$ be an open subgroup and $\alpha: \st_{\tilde W}(n) \to B$ be an isomorphism.
(In particular $\alpha$ defines some element of $ \acomm(\tilde W)$.)

\begin{enumerate}[\rm (i)]
\item There is a map $\Phi : V(\tilde{\mathcal T}) \to V(\tilde{\mathcal T})$ such that for each vertex $v \in V_{\geq n}(\tilde{\mathcal T})$, we have $\alpha(\rist_{\tilde W}(v)) = \rist_{\tilde W}(\Phi(v))$.

\item $\Phi$ maps every leaf set $L \subset V_{\geq n}(\tilde{\mathcal T})$ to a leaf set, and the restriction $\Phi : L \to \Phi(L)$ is injective (and hence bijective if $L$ is finite).
\end{enumerate}
\end{lemma}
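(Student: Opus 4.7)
My plan is to exploit the direct product decomposition
\[ \st_{\tilde W}(n) = \prod_{v \in V_n(\tilde{\mathcal T})} \rist_{\tilde W}(v) \]
and show that any isomorphism $\alpha$ between open subgroups of $\tilde W$ transports restricted stabilisers to restricted stabilisers.

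For part (i), I first treat the case $v \in V_n(\tilde{\mathcal T})$. The family $\{\rist_{\tilde W}(v) : v \in V_n(\tilde{\mathcal T})\}$ consists of pairwise commuting normal subgroups of $\st_{\tilde W}(n)$ whose internal product is $\st_{\tilde W}(n)$, so applying $\alpha$ yields a direct product decomposition $B = \prod_v K_v$ with $K_v := \alpha(\rist_{\tilde W}(v)) \cong W$, the $K_v$ pairwise commuting and normal in $B$. Since $B$ is open, there is $m \geq n$ with $\st_{\tilde W}(m) \leq B$, and I use the finer decomposition $\st_{\tilde W}(m) = \prod_{u \in V_m(\tilde{\mathcal T})} \rist_{\tilde W}(u)$ to analyse the $K_v$. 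The crucial algebraic input is that $W$ has trivial centre (a consequence of the fact that a branch group has trivial quasi-centre), so that no $\rist_{\tilde W}(u)$ can project non-trivially onto two distinct commuting factors $K_v, K_{v'}$. This assigns to each $u \in V_m(\tilde{\mathcal T})$ a unique $v \in V_n(\tilde{\mathcal T})$ with $\rist_{\tilde W}(u) \leq K_v$, partitioning $V_m(\tilde{\mathcal T})$ into blocks. The block associated to $v$ turns out to consist precisely of the level-$m$ descendants of a unique vertex $w_v$, and I set $\Phi(v) := w_v$; one then checks $K_v = \rist_{\tilde W}(\Phi(v))$ by comparing supports on $\tilde{\mathcal T}$, using that $K_v$ cannot act non-trivially outside $\tilde{\mathcal T}_{\Phi(v)}$ without violating commutation with some $K_{v'}$. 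For $u \in V_{\geq n}(\tilde{\mathcal T})$ at higher levels, I proceed by induction on the level: $u$ lies below some $v \in V_n(\tilde{\mathcal T})$, and $\alpha$ restricts to an isomorphism $\rist_{\tilde W}(v) \to \rist_{\tilde W}(\Phi(v))$; identifying $\tilde{\mathcal T}_v$ and $\tilde{\mathcal T}_{\Phi(v)}$ each with $\mathcal T$, I apply the same argument to this restriction, and compatibility of $\Phi$ across levels is automatic from the inclusion of restricted stabilisers.

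For part (ii), I rely on the criterion that two vertices $v, v' \in V_{\geq 1}(\tilde{\mathcal T})$ are independent if and only if $\rist_{\tilde W}(v)$ and $\rist_{\tilde W}(v')$ commute elementwise (the case where one is a strict descendant of the other being excluded because $W$ is non-abelian and acts faithfully on the relevant subtree, so the inclusion $\rist_{\tilde W}(v) < \rist_{\tilde W}(v')$ is non-central). Hence for an independent set $L \subset V_{\geq n}(\tilde{\mathcal T})$, the fact that $\alpha$ preserves pairwise commutation and distinctness among the $\rist_{\tilde W}(v)$, $v \in L$, shows that $\Phi(L)$ is independent and that $\Phi|_L$ is injective. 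For maximality of $\Phi(L)$ as a leaf set, the key observation is that for a leaf set $L$ in $V_{\geq n}(\tilde{\mathcal T})$ the product $\prod_{v \in L} \rist_{\tilde W}(v)$ coincides with the pointwise stabiliser of the finite subtree bounded above by $L$; transporting this equality through $\alpha$ identifies $\prod_{v \in L} \rist_{\tilde W}(\Phi(v))$ as the pointwise stabiliser of the corresponding finite subtree on the image side, which forces $\Phi(L)$ to be maximal independent and hence a leaf set.

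The main obstacle is the identification $K_v = \rist_{\tilde W}(\Phi(v))$ in part (i): one must extract the vertex $\Phi(v)$ purely from the algebraic structure of $K_v$ as a commuting direct factor of $B$. The decisive ingredients are the triviality of the centre of $W$, which rules out diagonal embeddings, and the rigidity of direct product decompositions of open subgroups of $\tilde W$ into commuting factors isomorphic to $W$, which forces the algebraic decomposition of $B$ to match the geometric decomposition of $\tilde{\mathcal T}$ into branches.
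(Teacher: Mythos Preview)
Your approach is genuinely different from the paper's. The paper does not manipulate direct product decompositions at all; instead it invokes R\"over's theorem \cite[Theorem~1.2]{Rover}, which realises every element of $\acomm(\tilde W)$ as a homeomorphism of the boundary $\partial\tilde{\mathcal T}$. Once $\alpha$ is viewed as such a homeomorphism, the vertex $\Phi(v)$ is defined directly: $\rist_{\tilde W}(v)$ has a unique non-trivial orbit $\mathfrak o(v)$ on $\partial\tilde{\mathcal T}$, and $\Phi(v)$ is the vertex of maximal level whose boundary shadow contains $\alpha(\mathfrak o(v))$. Both inclusions in $\alpha(\rist_{\tilde W}(v)) = \rist_{\tilde W}(\Phi(v))$ then follow from the orbit description, and part~(ii) is immediate because independence and the leaf-set condition translate into disjointness and covering of the orbits $\mathfrak o(v)$.

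Your purely algebraic route has a real gap in part~(i). The assertion that ``$W$ has trivial centre, so no $\rist_{\tilde W}(u)$ can project non-trivially onto two distinct commuting factors $K_v, K_{v'}$'' is not justified: triviality of the centre of $W$ does not by itself rule out a diagonal embedding of $W$ into $K_v \times K_{v'}$ (for instance, the diagonal in $W\times W$ is centreless yet projects onto both factors). What you need is a rigidity statement to the effect that any decomposition of an open subgroup of $\tilde W$ as a direct product of copies of $W$ must coincide with a decomposition into restricted stabilisers --- this is essentially the content of R\"over's theorem, or of the branch-rigidity results of Lavrenyuk--Nekrashevych that the paper cites. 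Without it, neither the assignment $u\mapsto v$ nor the subsequent claim that each block of $V_m(\tilde{\mathcal T})$ equals the set of level-$m$ descendants of a single vertex $w_v$ is established. (Note also that $\rist_{\tilde W}(u)$ is normal only in $\st_{\tilde W}(m)$, not in $B$, so one cannot directly apply normal-subgroup arguments in the product $B=\prod_v K_v$.)

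A smaller issue: in part~(ii) the maximality argument refers to ``the corresponding finite subtree on the image side'', which presupposes that $\Phi(L)$ is already a leaf set. This is easily repaired --- argue instead that $\prod_{v\in L}\rist_{\tilde W}(\Phi(v)) = \alpha\bigl(\prod_{v\in L}\rist_{\tilde W}(v)\bigr)$ is open in $\tilde W$, and that an independent set $S$ with $\prod_{w\in S}\rist_{\tilde W}(w)$ open must be a leaf set --- but as written it is circular.
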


\begin{proof}
We start by collecting a few preliminary observations.  By \cite[Theorem~1.2]{Rover}, the group $\acomm({\tilde W})$ is isomorphic to the relative commensurator of ${\tilde W}$ in the homeomorphism group of the boundary $\partial \tilde{\mathcal T}$. Since the isomorphism $\alpha$ defines a unique element of $\acomm({\tilde W})$, we can view it as an element of $\mathrm{Homeo}(\bd \tilde{\mathcal T})$. 

Given any subset $O \subset \bd \tilde{\mathcal T}$ containing at least two points, there is a unique vertex $v$ of maximal possible level such that $O \subset \bd \tilde{\mathcal T}_v$. We denote this vertex  by $\mathfrak v(O)$.  On the other hand, notice that the action of ${\tilde W}$ on $\partial \tilde{\mathcal T}$ is continuous, and every closed subgroup of ${\tilde W}$ has  closed orbits since it is compact. Therefore the restricted stabiliser $\rist_{\tilde W}(v)$ of a vertex $v$ has a unique orbit that consists of more than one point.
We denote this orbit by $\mathfrak o(v)$. We have $\mathfrak v( \mathfrak o(v)) = v$. 

\begin{compactenum}[(i)]
\item
Assume that $v$ has level at least $n$. This allows one to consider
\[ \Phi(v) = \mathfrak v(\alpha(\mathfrak o(v))) \,; \]
recall that $\alpha$ is regarded as a homeomorphism of $\partial\tilde{\mathcal T}$.
It is clear from the definition that $\alpha \rist_{\tilde W}(v) \alpha\inv \leq \rist_{\tilde W}(\Phi(v))$. Notice that $\mathfrak o(\Phi(v)) = \alpha(\mathfrak o(v))$. Therefore $\mathfrak o(v)$ is the unique orbit of $\alpha\inv \rist_{\tilde W}(\Phi(v)) \alpha$ that consists of more than one point.
We infer that $\alpha\inv \rist_{\tilde W}(\Phi(v)) \alpha \leq \rist_{\tilde W}(v)$, whence $\alpha \rist_{\tilde W}(v) \alpha\inv = \rist_{\tilde W}(\Phi(v))$ as desired.

\item
A pair of vertices $\{v, v'\} \subset  V(\tilde{\mathcal T})$ is independent if and only if $\mathfrak o(v)$ and $\mathfrak o(v')$ are disjoint. Moreover a finite independent subset $L  \subset  V(\tilde{\mathcal T})$ is a leaf set if and only if $\bd \tilde{\mathcal T} = \bigcup_{v \in L} \mathfrak o(v)$. It readily follows that $\Phi$ maps independent sets to independent sets, and leaf sets to leaf sets. Finally, given a leaf set $L \subset  V_{\geq n}(\tilde{\mathcal T})$, then $\{\Phi(v), \Phi(v')\}$ is an independent pair for all $v \neq v' \in L$. In particular the restriction of $\Phi$ to $L$ is injective. 
\qedhere
\end{compactenum}
\end{proof}

Notice that the lemma implies in particular that every germ of automorphism of $W^k$ acts by almost automophism on $\tilde{\mathcal T} = \mathcal T_{d, k}$.

\begin{proof}[Proof of Theorem~\ref{thm:CommBranch:Iwasawa}]
Every abstract commensurator of $\tilde W$ admits a representative of the form $\alpha: \st_{\tilde W}(n) \to B$ for some $n \geq 0$ and some open subgroup $B \leq \tilde W$. We now invoke Lemma~\ref{lem:Rover} to the leaf set $L = V_n(\tilde{\mathcal T})$. In view of the definition of the embeddings $F_{d, k} \subset V_{d,k} \subset \acomm(W^k)$ (see Lemma~\ref{lem:HT}), we deduce that there exists an element $g \in F_{d, k}$ such that $\alpha g$ normalises  $ \st_{\tilde W}(n)$. Since $\Aut(\st_{\tilde W}(n))$ is contained in $A_k$ (see Lemma~\ref{lem:Aut(W)}) we deduce the equality 
\[ \acomm(W^k) = \acomm(\tilde W) =  A_k\cdot F_{d, k}  =   F_{d, k} \cdot A_k, \]
as well as the fact that the group $\acomm(W^k)$ consists entirely of almost automorphisms of the tree $\mathcal T_{d, k}$. 

Given $v \in V_{\geq 2}(\tilde{\mathcal T})$, there exist $\alpha \in V_{d, k} $ such that $\Phi(v) $ has level exactly two, where $\Phi$ is defined by Lemma~\ref{lem:Rover}. Moreover, for all $n$ and each vertex $v$ of level~$n$, we have seen that $A_{k,n} = \Aut(\rist_{\tilde W}(v)) \wr  \Sym(k d^{n-1})$. The equality $\acomm(\tilde W) =  \la V_{d, k}, A_{k,2} \ra$ now follows from the fact that $\Sym(k d^{n-1})$ is in fact contained in $V_{d, k}$.

That $F_{d,k} \cap A_k = 1$ follows from the definition of $F_{d,k}$.  

We next consider the group $V \cap A_k$. It consists of those elements of $V$ which can be represented by rooted almost automorphisms of the form $(A, B, \varphi)$, where $A$ and $B$ are both balls centred at the root of $\mathcal T_{d, k}$. It is then not difficult to notice that $V \cap O_k$ is dense in $O_k$. Since $F_{d, k}$ is contained in $V_{d, k}$, we deduce from the equality $\acomm(W^k) = F_{d, k} \cdot A_k$ that $V_{d, k}$ is dense in $F_{d,k} \cdot O_k  \leq \acomm(W^k)$, as desired.

Finally, combining the triviality of $F_{d,k} \cap A_k$ with the fact that $A_k$ is open, we deduce that the covolume of $F_{d,k}$ in $\acomm(W^k)$ is bounded below by the volume of $A_k$, which is infinite since $A_k$ is not compact (see Lemma~\ref{lem:A_k}). Since $F_{d, k}$ is discrete and torsion-free, no subgroup of $\acomm(W^k)$ containing $A_k$ properly can be locally elliptic. 
\end{proof}

\subsection{Embeddings in compactly generated rigid simple groups}

We have seen in Corollary~\ref{cor:EmbeddingSimple} that if $D$ is perfect, then the wreath branch group $W(D)$ embeds as a compact open subgroup in some compactly generated simple group. Similarly, if $D$ coincides with a point stabiliser in some finite transitive permutation  group $F  \leq   \Sym(d+1)$, then $W(D)^2$ embeds as an edge-stabiliser in the Burger--Mozes simple group $U(F)^+$.

Our next goal is to determine when $W(D)^k$ embeds in a compactly generated simple group  which is \emph{rigid}. Recall that a rigid simple group $M = M(D, k)$ containing $W(D)^k$ as a compact open subgroup was defined in Corollary~\ref{cor:M(D, k)}. By Proposition~\ref{prop:AdmissibleRigid}(i), it is, up to isomorphism, the unique topologically simple locally compact group satisfying this condition.

\begin{theorem}\label{thm:CommBranch}
Let $D  \leq  \Sym(d)$ be transitive, let $W=W(D)$ and let $M  \in \mS$ be a rigid simple group containing a compact open subgroup isomorphic to $W^k$ for some $k>0$. Then the following conditions are equivalent.
\begin{enumerate}[\rm (i)]
\item $M$ is compactly generated.

\item $\mathscr L(M)$ is compactly generated.

\item $\mathscr L(M)$ is compactly generated and $[\mathscr L(M): M]\leq 2$.

\item $\mathscr L(M) = V_{d, k} \cdot W^k$.

\item $\Out(W) = 1$.

\item $\norma_{\Sym(d)}(D) =D$.
\end{enumerate}
\end{theorem}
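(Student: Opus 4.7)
The plan is to dispose of the routine equivalences using Corollary~\ref{cor:M(D, k)}, Lemma~\ref{lem:Aut(W)}, and Theorem~\ref{thm:CommBranch:Iwasawa}, and then to reduce the only substantive implication $(\mathrm{i}) \Rightarrow (\mathrm{v})$ to a contradiction via Proposition~\ref{prop:Aut}.

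First the bookkeeping. The equivalence $(\mathrm{v}) \Leftrightarrow (\mathrm{vi})$ is immediate from the formula $\Out(W) \cong \prod_{n>0} \norma_{\Sym(d)}(D)/D$ in Lemma~\ref{lem:Aut(W)}, and $(\mathrm{i}) \Leftrightarrow (\mathrm{ii}) \Leftrightarrow (\mathrm{iii})$ follows because $[\mathscr{L}(M):M] \leq 2$ unconditionally by Corollary~\ref{cor:M(D, k)}(iii). For $(\mathrm{v}) \Rightarrow (\mathrm{iv})$, assuming $\Out(W) = 1$ yields $\Aut(W) = W$, hence $A_k = O_k$; by Theorem~\ref{thm:CommBranch:Iwasawa} the closure of $V_{d,k}$ then equals $F_{d,k} \cdot O_k = F_{d,k} \cdot A_k = \mathscr{L}(M)$, so $V_{d,k}$ is dense, and the openness of $W^k$ gives $\mathscr{L}(M) = V_{d,k} \cdot W^k$. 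Conversely, $(\mathrm{iv}) \Rightarrow (\mathrm{v})$ because $V_{d,k} \subseteq \overline{V_{d,k}} = F_{d,k} \cdot O_k$ and $W^k \subseteq O_k$ force $\mathscr{L}(M) \subseteq F_{d,k} \cdot O_k$; comparing with $\mathscr{L}(M) = F_{d,k} \cdot A_k$ and using $F_{d,k} \cap A_k = 1$ produces $A_k = O_k$ and hence $\Aut(W) = W$. Finally $(\mathrm{iv}) \Rightarrow (\mathrm{ii})$ is immediate since $V_{d,k}$ is finitely generated by Higman's theorem and $W^k$ is compact.

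The substantive step is $(\mathrm{i}) \Rightarrow (\mathrm{v})$. Suppose $M$ is compactly generated and rigid, and suppose toward contradiction that $\Out(W) \neq 1$. Applying Proposition~\ref{prop:Aut} with $U = W^k$ and $V = \st_{W^k}(2) = W^{kd}$ (characteristic open in $U$, since $W$ is saturated by Lemma~\ref{lem:Aut(W)}), the restriction embedding $\Aut(U) = \Aut(W) \wr \Sym(k) \hookrightarrow \Aut(W) \wr \Sym(kd) = \Aut(V)$ must have at most countable index. I will obtain a contradiction by showing this index is uncountable. The key calculation is that the image of the restriction map $\Aut(W) \to \Aut(W)^d$ (restriction to $\st_W(1) = W^d$), further projected to $\Out(W)^d$, is contained in the diagonal subgroup $\Delta(\Out(W))$. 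Indeed, by the explicit description in Lemma~\ref{lem:Aut(W)}, every element of $B_m \leq \Aut(W)$ has all $d^m$ components lying in a single $D$-coset of $\norma_{\Sym(d)}(D)$, so its restriction to each of the $d$ first-level subtrees produces elements whose classes in $Q = \norma_{\Sym(d)}(D)/D$ all coincide; passing to the projective limit $\Aut(W) = \varprojlim A_n$ with $\Out(W) \cong Q^{\mathbf{N}}$, the diagonal containment persists. Consequently the image of $\Aut(U)$ in $\Out(V) = \Out(W)^{kd} \rtimes \Sym(kd)$ lies in $\Delta(\Out(W))^k \rtimes \Sym(k)$, whose complement already has cardinality at least $|\Out(W)|^{k(d-1)}$. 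Since $\Out(W)$ is uncountable as soon as $Q \neq 1$, this forces $[\Aut(V) : \Aut(U)] \geq 2^{\aleph_0}$, contradicting Proposition~\ref{prop:Aut}.

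The main obstacle will be verifying rigorously that the restriction map lands in the diagonal at every level; this requires tracking the compatibility of the semidirect decompositions $A_{n+1} = B_n \rtimes A_n$ with restriction to subtrees in the projective limit, i.e.\@ checking that restriction corresponds to a shift on $Q^{\mathbf{N}}$ identically across all first-level subtrees. Once this shift calculation is in place, the uncountability conclusion, and hence the whole theorem, follows immediately.
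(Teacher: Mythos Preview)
Your proof is correct and follows the paper's approach: the substantive implication $(\mathrm{i})\Rightarrow(\mathrm{v})$ via Proposition~\ref{prop:Aut} is identical, with your diagonal computation spelling out what the paper leaves as ``straightforward to deduce from Lemma~\ref{lem:Aut(W)}.'' Your organization of the routine equivalences is slightly more economical than the paper's---you invoke the unconditional bound $[\mathscr{L}(M):M]\leq 2$ from Corollary~\ref{cor:M(D, k)} to obtain $(\mathrm{i})\Leftrightarrow(\mathrm{ii})\Leftrightarrow(\mathrm{iii})$ at once, whereas the paper establishes $(\mathrm{ii})\Rightarrow(\mathrm{v})$ separately by a $\sigma$-compactness argument.

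One small caveat: in your (redundant) direct argument for $(\mathrm{iv})\Rightarrow(\mathrm{v})$, the inference ``$A_k=O_k$ hence $\Aut(W)=W$'' is correct but not as immediate as the phrasing suggests. The equality $A_k=O_k$ does not give $A_{k,1}=O_{k,1}$ on the nose; one must use compactness of $A_{k,n}$ to get $A_{k,n}\subseteq O_{k,m}$ for some $m$, and then observe (via the shift description of $\Out(W)\to\Out(W)^d$ you already identified) that an $\alpha\in\Aut(W)$ with $[\alpha]$ a nonzero constant sequence in $(\norma_{\Sym(d)}(D)/D)^{\mathbf N}$ never restricts into $W$ at any level. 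Since this implication is already covered by $(\mathrm{iv})\Rightarrow(\mathrm{ii})\Rightarrow(\mathrm{i})\Rightarrow(\mathrm{v})$, the overall argument is unaffected.
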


\begin{proof}
\begin{asparaitem}\itemsep1ex
    \item[(v) $\Leftrightarrow$ (vi)]
	Immediate from Lemma~\ref{lem:Aut(W)}.

    \item[(iii) $\Rightarrow$ (i) and (ii)]
	Obvious.

    \item[(i) $\Rightarrow$ (v) ]
	By Lemma~\ref{lem:Aut(W)}, the group $W$ is saturated and, hence,  the subgroup $\st_W(1)$ is characteristic in $W$. Moreover,
	$\big(\st_W(1)\big)^k$ is characteristic in $W^k$.
	Also, $\Out(W)$ is either trivial or uncountable. 

	It is straightforward to deduce from the description of the groups $\Aut(W)$ and $\Aut(W^k)$ given in Lemma~\ref{lem:Aut(W)} that if $\Out(W)$ is non-trivial, then the image of $\Aut(W^k)$ in $\Aut\big( (\st_W(1))^k\big) $ is of uncountable index. In view of Proposition~\ref{prop:Aut},   the hypothesis that $M$ is rigid and compactly generated guarantees that this index is at most countable. Therefore $\Out(W)$ must be trivial, as desired.

    \item[(ii) $\Rightarrow$ (v)]
	Assume that $\Out(W)$ is not trivial. Then it is uncountable, and so is $\Out(W^k)$, see Lemma~\ref{lem:Aut(W)}. This implies that the index of $W^k$ in $\mathscr L(M)$ is uncountable. In particular $\mathscr L(M)$ cannot be $\sigma$-compact, and \emph{a fortiori} not compactly generated.

    \item[(v) $\Rightarrow$ (iv)]
Since $A_{k, n} \cap V_{d, k}$ maps onto $\Sym(kd^n)$ for all $n>0$,
	we deduce from Theorem~\ref{thm:CommBranch:Iwasawa} that $\mathscr L(M) = \Gamma \cdot W^k$, where $\Gamma = V_{d, k}$.
	The claim follows.

    \item[(iv) $\Rightarrow$ (iii)]
	Clearly the assumption implies that $\mathscr L(M)$ is compactly generated.
The inequality $[\mathscr L(M):M] \leq 2$ follows from Corollary~\ref{cor:M(D, k)}.\qedhere
\end{asparaitem}

\end{proof}


We conclude this section with the proof of the last two theorems announced in the introduction.

\begin{proof}[Proof of Theorem~\ref{thmi:L(G)}]
We deduce from Proposition~\ref{prop:AdmissibleRigid} that $G$ embeds as an open subgroup in some rigid simple group $\tilde G \in \mS$. Let $D$ denote the point-stabiliser $F_0$ viewed as a subgroup of $\Sym(d)$. Set $W =W(D)$ and recall that every edge-stabiliser in $G$ is isomorphic to $W^2 = W \times W$. 

It follows from  these observations that the hypotheses of Theorem~\ref{thm:CommBranch} are satisfied with $k=2$. All the desired statements now follow, except the fact that $H = [\mathscr L(G), \mathscr L(G)] $ is abstractly simple. The passage from topological simplicity to abstract simplicity goes as follows. 

First recall that $\comm(G)$ is open in $\acomm(G)$. Moreover $G$ is abstractly simple (see Theorem~\ref{th:tits}) and is thus contained in $H$. This implies that $H$ is itself open. Let now $N$ be an abstract normal subgroup of $H$. The intersection $G \cap N$ is either trivial or equal to $G$. Since $G$ is open, the former case implies that $N$ is discrete while the latter case implies that $N$ is open.  In either case $N$ is closed. The result follows.
\end{proof}

We finally prove Theorem~\ref{thmi:NewSimpleGroups}, or rather the following more detailed statement.
\begin{theorem}\label{thm:NewSimpleGroups}
Let $d > 1$,  $D \leq \Sym(d)$ be transitive and $W = W(D)$ be the profinite branch group defined as the infinitely iterated wreath product of $D$ with itself.  Then for every $k>0$, there is a locally compact group $M = M(D, k)$ which is topologically simple and rigid, and which contains the direct product $W^k$ of $k$ copies of $W$ as a compact open subgroup. Moreover:
\begin{enumerate}[\rm (i)]
\item $M$ is uniquely determined up to isomorphism. 

\item $M$ is compactly generated if and only if  $\norma_{\Sym(d)}(D) = D$. 

\item $[\acomm(M): \comm(M)] \leq 2$ and $[\acomm(M): \comm(M)] = 2$ if and only if $d$ is odd and $D  \leq \Alt(d)$. 

\item $\acomm(M) =  F_{d, k} \cdot A_k$, where  $F_{d, k}$ is a copy of the Higman--Thompson group  embedded as a discrete subgroup, and $A_k$ is a non-compact locally elliptic open subgroup  such that $F_{d, k} \cap A_k = 1$. Moreover $A_k$ is a maximal locally elliptic subgroup of $\acomm(M)$.  Furthermore $F_{d, k}$ is contained in $\comm(M)$ and $A_k$ possesses an open subgroup $A_k^+$ of index at most two which is topologically simple and equally contained in $\comm(M)$. 

\item If  $k  \equiv k' \mod (d-1)$, then  $M(D, k) \cong M(D, k')$.

\item If $D' \leq \Sym(d')$ contains a simple subgroup which does not embed as a subgroup of $D$, then  $M(D, k) \not \cong M(D', k')$ for all $k, k' >0$. 

\end{enumerate}
\end{theorem}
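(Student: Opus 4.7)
The plan is to derive Theorem~\ref{thm:NewSimpleGroups} by assembling the results already established in this section, with only statements (v) and (vi) requiring genuinely new arguments. The group $M = M(D,k)$ is the one introduced in Corollary~\ref{cor:M(D, k)}, namely the intersection of all non-trivial closed normal subgroups of $\acomm(W^k)$. That corollary directly provides the opening assertion (rigidity, topological simplicity, and the compact open subgroup $W^k$) as well as statement (iii), once we note that $M$ has trivial quasi-centre (since $W^k$ does, and the quasi-centre of an open subgroup contains the intersection of the ambient quasi-centre with the subgroup) so that $\comm(M) = M$ under the identification $\acomm(M) = \acomm(W^k)$. Uniqueness (i) is immediate from Proposition~\ref{prop:AdmissibleRigid}(i), and (ii) is the equivalence (i)~$\Leftrightarrow$~(vi) of Theorem~\ref{thm:CommBranch}. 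Assertion (iv) is almost entirely contained in Theorem~\ref{thm:CommBranch:Iwasawa} and Lemma~\ref{lem:A_k}; the only additional claims are that $F_{d,k} \subseteq \comm(M)$ and $A_k^+ \subseteq \comm(M)$, both of which are established in the proof of Corollary~\ref{cor:M(D, k)}, where it was shown that $F_{d,k} \subseteq [V_{d,k}, V_{d,k}] \subseteq M$ and that $A_k^+ \subseteq M$ via Lemma~\ref{lem:A_k}(ii).

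For (v), the plan is to exhibit an isomorphism between appropriate open subgroups of $W^k$ and $W^{k+d-1}$; the general case $k \equiv k' \pmod{d-1}$ then follows by iteration and uniqueness (i). Because $W = W(D)$ is self-replicating, the compact open subgroup $\st_W(1)$ of $W$ is isomorphic to $W^d$. Applying this to a single coordinate of $W^k = W^{k-1} \times W$ yields an embedding of $W^{k+d-1} \cong W^{k-1} \times \st_W(1)$ as an open subgroup of $W^k$. Hence $W^k$ and $W^{k+d-1}$ are commensurable, so $\acomm(W^k) = \acomm(W^{k+d-1})$, and the intersection of all non-trivial closed normal subgroups of this common group yields the same object $M(D,k) = M(D,k+d-1)$.

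The main obstacle is item (vi). Suppose for contradiction that there is an isomorphism $\phi : M(D,k) \xrightarrow{\sim} M(D',k')$. Then $\phi(W(D)^k)$ and $W(D')^{k'}$ are compact open subgroups of the same topological group $M(D',k')$, so their intersection $V$ is compact open in both. Every compact open subgroup of $W(D')^{k'}$ contains some $\st_{W(D')^{k'}}(n) \cong W(D')^{k'(d')^{n-1}}$, hence $V$ contains a copy of the (finite) simple subgroup $S \leq D' \leq W(D')$; transporting by $\phi^{-1}$ gives an embedding $S \hookrightarrow W(D)^k$. To complete the contradiction it suffices to prove the following claim: every finite simple subgroup of $W(D)^k$ embeds as a subgroup of $D$. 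Given such an $S$, choose $n$ so large that the natural map $S \to D_n^k$ is injective, where $D_n = D \wr \cdots \wr D$ is the $n$-fold iterated wreath product. For a simple subgroup of a direct product, each coordinate projection has kernel a normal subgroup of $S$, hence either trivial or all of $S$, so $S$ embeds into some single factor $D_n$. Writing $D_n = D^{d^{n-1}} \rtimes D_{n-1}$ and considering the projection $\pi \colon S \to D_{n-1}$, the same dichotomy shows that either $\pi$ is injective (apply the inductive hypothesis to conclude $S \hookrightarrow D$) or $S \subseteq D^{d^{n-1}}$ (apply the direct-product observation once more to conclude $S \hookrightarrow D$). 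The base case $n=1$ is trivial. This contradicts the hypothesis that $S$ does not embed in $D$. The main delicacy in this step is the repeated use of simplicity of $S$ at each reduction, but every individual case is straightforward once organised as above.
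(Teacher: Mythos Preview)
Your proof is correct and follows essentially the same route as the paper: the opening claims and items (i)--(iv) are obtained by assembling Corollary~\ref{cor:M(D, k)}, Proposition~\ref{prop:AdmissibleRigid}, Theorem~\ref{thm:CommBranch}, Theorem~\ref{thm:CommBranch:Iwasawa} and Lemma~\ref{lem:A_k}, while (v) and (vi) are reduced to showing that $W(D)^k$ and $W(D)^{k+d-1}$ are commensurable (via self-replication) and that a finite simple subgroup of $W(D)^k$ must embed in $D$. Your treatment of (vi) is in fact more explicit than the paper's, which merely asserts that no finite quotient of $W(D)^k$ contains a copy of $S$; your inductive reduction through the iterated wreath product structure supplies exactly the missing detail.
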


\begin{proof}
The existence of a rigid $M = M(D,k) \in \mS$ follows from Corollary~\ref{cor:M(D, k)}. Assertion~(i) follows from Proposition~\ref{prop:AdmissibleRigid}, Assertion~(ii) is contained in Theorem~\ref{thm:CommBranch} while Assertions~(iii) and~(iv) follow from Theorem~\ref{thm:CommBranch:Iwasawa} and~Corollary~\ref{cor:M(D, k)}.

\medskip
In order to prove Assertions~(v) and~(vi), first notice that groups of the form $M(D, k)$ are all rigid. In particular two of them are isomorphic if and only if they are locally isomorphic. Assertion~(v) is now straightforward to establish. For Assertion~(vi), it suffices to show that if $D'$ contains a simple subgroup $S$ which does not embed as a subgroup of $D$, then $W(D)^k$ and $W(D')^{k'}$ are not locally isomorphic.  It is not difficult to show that every identity neighbourhood of $W(D')^{k'}$ contains a finite subgroup isomorphic $D'$. On the other hand, one shows that no finite quotient of $W(D)^k$ contains a subgroup isomorphic to $S$, which implies that the profinite group $W(D)^k$ itself does not contain any copy of $S$. Thus $W(D)^k$ and $W(D')^{k'}$ are not locally isomorphic, as claimed.
\end{proof}

\begin{remark}
None of the compactly generated simple groups appearing in Theorem~\ref{thm:NewSimpleGroups} (or in Corollary~\ref{cor:EmbeddingSimple}) admits any continuous, proper and \emph{cocompact} action on any locally finite tree, or on any locally finite CAT(0) cell complex. Indeed, the Higman--Thompson group $V_{d, k}$, as well as its derived group $[V_{d, k}, V_{d, k}]$, contains a copy of every finite group. This implies that if $V_{d, k}$ acts on a CAT(0) cell complex $X$, then the size of links of vertices in $X$ is unbounded, thereby preventing the action of any group containing $V_{d, k}$ from being cocompact.  
\end{remark}


\end{document}